\theoremstyle{plain}
\newtheorem{theorem}{Theorem}[section]
\newtheorem{corollary}[theorem]{Corollary}
\newtheorem{proposition}[theorem]{Proposition}
\newtheorem{lemma}[theorem]{Lemma}
\theoremstyle{definition}
\newtheorem{definition}[theorem]{Definition}
\newtheorem{remark}[theorem]{Remark}
\newtheorem{example}[theorem]{Example}
\newtheorem{problem}[theorem]{Problem}
\numberwithin{equation}{section}
\begin{document}

\title[Octonions as Clifford-like algebras]{Octonions as Clifford-like algebras}

\author[C.M. Depies]{Connor M. Depies$^1$}
\author[J.D.H. Smith]{Jonathan D. H. Smith$^2$}
\author[M.D. Ashburn]{Mitchell D. Ashburn$^3$}
\address{$^{1,2,3}$Department of Mathematics\\
Iowa State University\\
Ames, Iowa 50011-2104, U.S.A.}
\email{$^1$cmdepies@iastate.edu}
\email{$^2$jdhsmith@iastate.edu}
\email{$^3$ashburn@iastate.edu}
\keywords{octonions; Clifford algebra; quadratic algebra; alternative algebra, Cayley-Dickson algebra}
\subjclass[2020]{primary 17D05, secondary 17A35, 17A45}

\thanks{}

\begin{abstract}
The associative Cayley-Dickson algebras over the field of real numbers are also Clifford algebras. The alternative but nonassociative real Cayley-Dickson algebras, notably the octonions and split octonions, share with Clifford algebras an involutary anti-automorphism and a set of mutually anticommutative generators. On the basis of these similarities, we introduce \emph{Kingdon algebras}: alternative Clifford-like algebras over vector spaces equipped with a symmetric bilinear form. Over three-dimensional vector spaces, our construction quantizes an alternative non-associative analogue of the exterior algebra. The octonions and split octonions, along with other real generalized Cayley-Dickson algebras in Albert's sense, arise as Kingdon algebras. Our construction gives natural characterizations of the octonion and split octonion algebras by a universality property endowing them with a selected superalgebra structure.
\end{abstract}

\maketitle


\section{Introduction}\label{S:intro}

\subsection{Motivation}

To within isomorphism, there are four different real quadratic normed division algebras: $\mathbb{R}$, $\mathbb{C}$, $\mathbb{H}$ and $\mathbb{O}$; and three split real quadratic normed algebras: the split complex numbers\footnote{The \emph{paracomplex numbers} of  Cruceanu \textit{et al.} \cite{CruForGa}; the \emph{Lorenz numbers} of \cite[Def'n.~6.31]{Harvey}.} $\mathbb R\oplus\mathbb R$, the split quaternions\footnote{The algebra $\mathbb R_2^2$ of real $2\times 2$-matrices \eqref{E:ClifCayl}; Kocik's \emph{pseudo-quaternions} or \emph{kwaternions} \cite[\S3A]{Kocik}.} and the split octonions $\widetilde{\mathbb O}$\footnote{We use the notation of \cite[Def'n.~6.31]{Harvey}.}. Starting from $\mathbb R$, all the further algebras, with their involutary anti-isomorphic \emph{conjugations}, are produced by the iterative \emph{doubling} or \emph{Cayley-Dickson process}
\begin{equation}\label{E:GenCa-Di}
(x_1,y_1)\cdot(x_2,y_2)=
(x_1x_2+\gamma y_2y_1^*,x_1^*y_2+x_2y_1)\,,\
(x,y)^*=(x^*,-y)
\end{equation}
with $r=r^*$ for $r\in\mathbb R$, and $\gamma\in\mathbb R$ \cite[(3)]{Schafer43}, \cite[\S2.2]{SS82}, Section~\ref{SS:Cayl-Dic} below. The associative algebras listed are \emph{Clifford algebras}  \cite[pps. 37-38]{CC1954}, \cite{Chevalley2}, Section~\ref{SS:CliffAlg} below. As for the octonions $\mathbb O$ and split octonions $\widetilde{\mathbb O}$:
\begin{itemize}
\item
Both involve a generating triple
\begin{equation}\label{E:GenACijk}
\set{i,j,k}
\end{equation}
of $\mathbb R$-algebraically independent elements which
\begin{itemize}
\item
individually square to a real number,
\item
mutually anti-commute, and
\item
collectively anti-associate.
\end{itemize}
\item
Both have an involutary anti-isomorphism | their conjugation.
\item
Both admit non-trivial superalgebra structures, i.e., even/odd grading.
\end{itemize}
In short, except for their lack of associativity, these algebras share most of the properties of a Clifford algebra. Our aim in this paper is to present a new, Clifford-like construction process for the algebras, based on a vector space with a bilinear form, as a universal alternative extension that incorporates the action of the form. In particular, we will exhibit the octonions and split octonions as natural witnesses of universality properties.

Our construction method may be contrasted with the method of \cite{GO2011}, which created both Clifford algebras and certain nonassociative algebras as twisted Boolean group algebras. In that paper, two series of $\mathbb{R}$-algebras $\mathbb{O}_{l,m}$ and $\mathbb{M}_{l,m}$ of dimension $2^{l+m}$ appear, with the octonions as $\mathbb{O}_{0,3}$ and the split octonions as $\mathbb{O}_{1,2}$, $\mathbb{O}_{2,1}$ and $\mathbb{O}_{3,0}$. In general, the twisted Boolean group algebras are not alternative. Furthermore, they only include Clifford algebras of spaces with nondegenerate bilinear forms, and thus exclude the exterior algebras.

We define a \emph{formed space} $(V,B)$ to be a finite-dimensional vector space $V$ equipped with a symmetric bilinear form $B$ (Definition~\ref{D:FormedSp}). The associated Clifford algebra $\mathrm{Cl}(V,B)$ is the quotient of the free associative algebra or \emph{tensor algebra} $TV$ over $V$ by the ideal 
\begin{equation}\label{E:ClTAlgId}
J=\braket{xy+yx-B(x,y)\cdot 1|x,y\in V}
\end{equation}
of $TV$ (compare \cite[pp.37--38]{CC1954}, \cite[p.102]{Chevalley2}, \cite[p.179]{Harvey}). Such quotients will not create the nonassociative octonions. For that purpose, consider the real span $E$ of the generating set \eqref{E:GenACijk}. Since the octonions are alternative, we may replace the free associative algebra $TE$ with the \emph{free alternative algebra} $\mathrm{Alt}[E]$ over $E$ (\S\ref{SS:FreeAltA}), and then take its quotient by an appropriate ideal.

The question then is to determine which ideal of $\mathrm{Alt}[E]$ will yield the (split) octonions as its quotient. Note that if the ideal $J$ of \eqref{E:ClTAlgId} was interpreted in $\mathrm{Alt}[E]$, we would have
\begin{equation}\label{E:JijkJijk}
\dim\mathrm{Span}\set{J+i(jk), J+(ij)k}=2
\end{equation}
for the members $i,j,k$ of \eqref{E:GenACijk}, and then the dimension of $\mathrm{Alt}[E]/J$ would be too large. Therefore, we need a rebracketing relation in order to cut down the dimension in \eqref{E:JijkJijk}. In tensor algebra quotients such as Clifford algebras, defining relations that merely involve pairs of vectors are sufficent, since associativity does the rebracketing for us. If only alternativity is given, we cannot do that.

Thus, in the construction that we present in Definition~\ref{D:Altcliffdef}, we will impose a relation involving products of three vectors to capture the anti-associativity of the elements of the generating set \eqref{E:GenACijk}. Motivated by the fact that $x+x^*$ is a field element whenever $x$ is any element of a (generalized) Cayley-Dickson algebra  \cite[Lemma~2.4]{SS82}, we take  
\begin{equation}\label{E:plindrCD}
(ab)c+{((ab)c)}^*=(ab)c-c(ba)=0
\end{equation}
for elements $a,b,c$ of the real span $E$ of the generating set \eqref{E:GenACijk}. 

In general, given a finite-dimensional vector space $V$ over a field $F$ where $2$ is invertible, we will add the \emph{palindromic relators}
\begin{equation}\label{E:plindrom}
(xy)z-z(yx)
\end{equation}
that involve triples $x,y,z$ of vectors from $V$ to the \emph{Clifford relators}
\begin{equation}\label{E:CliffRel}
xy+yx-B(x,y)\cdot 1
\end{equation}
involving pairs of vectors from $V$ and a symmetric bilinear form $B(x,y)$ on $V$ that are already present in \eqref{E:ClTAlgId}, thereby generating an ideal $I_{B}$ of the free alternative algebra $\mathrm{Alt}[V]$ over the vector space $V$. Then, the quotient $K(V,B)$ by this ideal forms an alternative analogue of a Clifford algebra. Using W.K.~Clifford's middle name, the maiden name of his mother \cite{Rooney}, we will refer to these structures as \emph{Kingdon algebras} (Definition~\ref{D:Altcliffdef}). In \S\ref{S:altextalg}, our construction produces an $8$-dimensional alternative algebra $\Lambda'(\mathbb{R}^3)$ that resembles an exterior algebra
\footnote{
It is possible to construct such ``alternative exterior algebras" using vector spaces of higher dimension, but we only briefly mention these in Definition~\ref{D:gnltexal} and Problem~\ref{P:gnltexal}, without further elaboration of their properties in this paper.
}, taking the bilinear form to vanish everywhere. This algebra can be quantized by varying the bilinear form, to yield other $8$-dimensional Kingdon algebras.

\subsection{Plan of the paper}

Background material that goes beyond the motivating discussion is presented in the second chapter. The relevant aspects of alternative algebras are summarized in \S\ref{SS:FreeAltA}. Then, quadratic algebras are treated in \S\ref{SS:QuadrAlg}, placing especial emphasis on the class of symmetric quadratic algebras. Clifford algebras are defined by their universality property in \S\ref{SS:CliffAlg}, and related to their explicit construction as the quotient of a tensor algebra by the ideal \eqref{E:ClTAlgId}.  The Cayley-Dickson process is exhibited in \S\ref{SS:Cayl-Dic} as an iterative process within the class of symmetric quadratic algebras. The final section of the chapter then relates the real Clifford algebras of dimension at most four to their constructions by the Cayley-Dickson process, as summarized in the diagram \eqref{E:ClifCayl}.

Working with an arbitrary field where $2$ is invertible,\footnote{Extension to the characteristic $2$ case is often possible, but comes with a penalty (compare \cite[App.~B]{HVM}).} Definition~\ref{D:Altcliffdef} presents the Kingdon algebra $K(V,B)$ over a formed space $(V,B)$ as the quotient of the free alternative algebra over $V$ by the ideal \eqref{E:altcliffdef} that is generated by the Clifford relators \eqref{E:CliffRel} and the palindromic relators \eqref{E:plindrom}. The Kingdon algebra $K(V,B)$ supports a $\mathbb Z/_2$-grading (Proposition~\ref{P:KingZ2gr}). Proposition~\ref{P:UnvPrKVB} then shows that $K(V,B)$ satisfies the universality property for the domain of an algebra homomorphism extending a suitable linear map from $V$ into an alternative algebra. The universality property furnishes an involutive automorphism and an involutive anti-automorphism analogous to those on Clifford algebras (Corollary~\ref{C:UnvPrKVB}). Theorem~\ref{T:reductionstep} provides a way to rebracket a product of three vectors in a Kingdon algebra as a substitute for the missing associativity. The third chapter culminates with Theorem~\ref{T:constructingoct}, which identifies octonions and split octonions as Kingdon algebras, and thus as solutions to universality properties. Finally, \S\ref{SS:admstrpl} relates the Kingdon algebra specification of the octonions to their traditional specification in terms of an oriented Fano plane.

In the fourth chapter, the $8$-dimensional alternative exterior algebra $\Lambda'(F^3)$ over a $3$-dimensional vector space (Table~\ref{Tb:altextalg}) is introduced, as a parallel to the standard associative exterior algebra $\Lambda(F^3)$ (\S\ref{S:altextalg}). The alternative exterior algebra is identified as the Kingdon algebra $K(F^3,0)$ (Proposition~\ref{P:LamPrKIin}). Theorem~\ref{T:cliffordtocayley} generalizes Theorem~\ref{T:constructingoct} by showing that each Kingdon algebra $K(V,B)$ over a three-dimensional formed space $(V,B)$ is a Cayley-Dickson algebra. It then transpires (\S\ref{SS:KingNorm}) that simplicity of $K(V,B)$ is equivalent to non-degeneracy of the form $B$. With these results in place, Table~\ref{Tb:Mlt8-dim} presents the multiplication of each such Kingdon algebra $K(V,B)$ as a deformation or quantization of the alternative exterior algebra.

The final chapter is devoted to the Kingdon algebras $K(V,B)$ over three-dimensional real formed spaces $(V,B)$, aiming for a classification of these algebras and a determination of their structural properties; most notably their commutants, nuclei, and centers (\S\ref{SS:CeNucCom}). The full classification into isomorphism types is given in \S\ref{SS:FC}, relying on the structural properties to  separate the distinct types. The results are summarized in Table~\ref{Tb:ClassTab}. In \S\ref{S:cayley}, the isomorphism types of Kingdon algebra are located in a graph \eqref{E:KingCayl} of Cayley-Dickson algebras. This graph expands the diagram \eqref{E:ClifCayl} that identified which Cayley-Dickson algebras are Clifford algebras.

\subsection{Notation}

We will often use ``(split) octonions'' as a standard formulation to include both the octonions and the split octonions. In most cases, we will follow the algebraic or diagrammatic convention placing functions after their arguments, either on the line (cf. $n!$) or as a superscript (cf. $x^2$).

\section{Background}\label{S:backgrnd}

\subsection{Alternative algebras}\label{SS:FreeAltA}

Let $A$ be a (not necessarily associative) algebra over a field. Define the \emph{left multiplication}
$$
L(a)\colon A\to A;x\mapsto ax
$$
and
\emph{right multiplication}
$$
R(a)\colon A\to A;x\mapsto xa 
$$
for each element $a$ of $A$. Since $A$ is an algebra, both $L(a)$ and $R(a)$ are endomorphisms of the underlying vector space $A$. In other words, both maps $R\colon A\to\mathrm{End}\,A$ and $L\colon A\to\mathrm{End}\,A$ are linear. Define the \emph{associator}
\begin{align}\notag
(x,y,z)&=(xy)z-x(yz)\\ 
\notag
&
=x[R(y)R(z)-R(yz)] \\ 
\notag
&
=y[L(x)R(z)-R(z)L(x)] \\ 
\notag
&
=z[L(xy)-L(y)L(x)]\,, 
\end{align}
which vanishes if $A$ is associative. The associator is a trilinear function on $A$.

\begin{lemma}\label{L:alternative}
On an algebra $A$ over a field where $2$ is invertible, the following conditions are equivalent: 
\begin{itemize}
\item[$(\mathrm a)$]
The equations
$$
L(a)^2=L(a^2)
\
\mbox{ and }
\
R(a^2)=R(a)^2
$$
hold for all elements $a$ of $A$;
\item[$(\mathrm b)$]
Both the \emph{left alternative identity}
$$
x(xy)=(xx)y
$$
and \emph{right alternative identity}
$$
y(xx)=(yx)x
$$
hold in $A$.
\item[$(\mathrm c)$]
The equations
$$
(x,x,y)=0=(y,x,x)
$$
hold in $A$.
\item[$(\mathrm d)$]
For each element $\pi$ of the symmetric group $S_3$, the equation
\begin{equation}\label{E:AssocAlt}
(x_{1\pi},x_{2\pi},x_{3\pi})=(\mathrm{sgn}\,\pi)(x_1,x_2,x_3)
\end{equation}
holds.
\end{itemize}
\end{lemma}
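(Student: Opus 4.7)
The plan is to establish the chain of equivalences in the order $(\mathrm a)\Leftrightarrow(\mathrm b)\Leftrightarrow(\mathrm c)$, then $(\mathrm c)\Leftrightarrow(\mathrm d)$. The first two equivalences are essentially tautological from the definitions. Applying both sides of $L(a)^2=L(a^2)$ to an arbitrary $y\in A$ reads $a(ay)=(aa)y$, so equality of the two endomorphisms is precisely the left alternative identity; the right case is identical. Since the associator was defined by $(x,y,z)=(xy)z-x(yz)$, the identity $(x,x,y)=0$ in $(\mathrm c)$ is literally the equation $(xx)y=x(xy)$ of $(\mathrm b)$, and similarly $(y,x,x)=0$ is the right alternative identity.

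For $(\mathrm d)\Rightarrow(\mathrm c)$, I would take $\pi$ to be the transposition $(12)\in S_3$ in \eqref{E:AssocAlt} and substitute $x_1=x_2=x$, $x_3=y$; this forces $(x,x,y)=-(x,x,y)$, hence $2(x,x,y)=0$, and invertibility of $2$ yields $(x,x,y)=0$. The choice $\pi=(23)$ with $x_1=y$, $x_2=x_3=x$ yields $(y,x,x)=0$. This is the only step where the invertibility of $2$ is actually used.

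The substantive implication is $(\mathrm c)\Rightarrow(\mathrm d)$, which I would handle by linearization. Since the associator is trilinear, replacing $x$ by $x+x'$ in $(x,x,y)=0$ and expanding gives
$$
0=(x,x,y)+(x,x',y)+(x',x,y)+(x',x',y)=(x,x',y)+(x',x,y)\,,
$$
so the associator is antisymmetric in its first two arguments. Linearizing $(y,x,x)=0$ in the same way produces antisymmetry in the last two arguments. Since the transpositions $(12)$ and $(23)$ generate $S_3$, each of the remaining permutations $(13)$, $(123)$, $(132)$ may be written as a product of these two generators, and iterated application of the two antisymmetry relations rewrites $(x_{1\pi},x_{2\pi},x_{3\pi})$ as $\pm(x_1,x_2,x_3)$, the sign being $(-1)^k$ where $k$ is the number of generator transpositions used. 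Since this parity agrees with $\mathrm{sgn}\,\pi$, the identity \eqref{E:AssocAlt} follows for every $\pi\in S_3$.

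The only mild obstacle is the bookkeeping of signs in this last step, but it is routine: the length function on $S_3$ relative to the generators $\{(12),(23)\}$ descends modulo $2$ to the sign homomorphism, so a single explicit factorization of each of the three non-generator permutations as a word in $(12)$ and $(23)$ completes the verification.
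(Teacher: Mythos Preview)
Your proof is correct and follows essentially the same approach as the paper: the equivalences $(\mathrm a)\Leftrightarrow(\mathrm b)\Leftrightarrow(\mathrm c)$ are noted as tautological, $(\mathrm d)\Rightarrow(\mathrm c)$ is obtained by specializing \eqref{E:AssocAlt} at $\pi=(12)$ and $\pi=(23)$ using invertibility of $2$, and $(\mathrm c)\Rightarrow(\mathrm d)$ is obtained by polarizing each of the two associator identities to produce antisymmetry under the generators $(12)$ and $(23)$ of $S_3$. The paper's proof is slightly terser but structurally identical.
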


\begin{proof}
The implications $(\mathrm a)\ \Leftrightarrow\ (\mathrm b)\ \Leftrightarrow\ (\mathrm c)$ are immediate (over any field). If (d) holds and $2$ is invertible, the first equation of (c) follows on setting $x_1=x_2=x$, $x_3=y$, and $\pi=(1\ 2)$, while the second follows on setting $x_1=y$, $x_2=x_3=x$, and $\pi=(2\ 3)$.

Conversly, suppose (c) holds. No assumption is made about the underlying field. Polarization of $(x,x,y)=0$ on the first two arguments shows that $\pi=(1\ 2)$ acts appropriately for (d):
\begin{align*}
0=(x_1+x_2,x_1+x_2,x_3)=(x_1,x_2,x_3)+(x_2,x_1,x_3)\,,
\end{align*}
while polarization of $(y,x,x)=0$ on the second two arguments shows that $\pi=(2\ 3)$ acts appropriately for (d). Since $S_3$ is generated by $\set{(1\ 2), (2\ 3)}$, this suffices for (d).
\end{proof}

\begin{definition}\label{D:alternative}
Consider an algebra $A$.
\begin{enumerate}
\item[$(\mathrm a)$]
The algebra is \emph{alternative} if it satisfies the conditions (a)--(d) of Lemma~\ref{L:alternative}.
\item[$(\mathrm b)$]
The algebra $A$ is \emph{diassociative} if $(x,y,z)=0$ for elements $x,y,z$ of $A$ that lie in a subalgebra $B$ which is generated by a pair of elements.
\item[$(\mathrm c)$]
The algebra is \emph{flexible} if $(xy)x=x(yx)$ for elements $x,y$ of $A$.
\end{enumerate}
\end{definition}

The following is known as Artin's Theorem \cite[Th.~3.1]{S95}, \cite[Th.~2.2]{SS82}.

\begin{theorem}\label{L:2assoc}
Alternative algebras are diassociative.
\end{theorem}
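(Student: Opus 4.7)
The plan is to exploit two consequences of alternativity established in Lemma~\ref{L:alternative}: first, the associator is an alternating trilinear function, so in particular $(x,y,x)=0$ and the algebra is flexible, making $xyx$ an unambiguous element; second, the left and right alternative identities admit polarizations that give additional rebracketing identities. From these I would derive the three Moufang identities
\[
(xyx)z = x\bigl(y(xz)\bigr),\qquad z(xyx) = \bigl((zx)y\bigr)x,\qquad (xy)(zx) = x(yz)x,
\]
by combining the alternating property with the (universally valid) Teichm\"uller identity
\[
(wx,y,z) - (w,xy,z) + (w,x,yz) = w(x,y,z) + (w,x,y)z,
\]
which expresses how the associator interacts with products in any, not necessarily associative, algebra.

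With the Moufang identities in hand, let $B$ denote the subalgebra of $A$ generated by two elements $a$ and $b$. By trilinearity of the associator, it suffices to show $(m_1, m_2, m_3) = 0$ for every triple of monomials $m_1, m_2, m_3$ in $a, b$. I would proceed by induction on the total degree $n = \deg m_1 + \deg m_2 + \deg m_3$. The base case ($n=3$, so each $m_i$ is a generator) is immediate from the alternating property: with only two generators available, at least two of $m_1, m_2, m_3$ coincide, forcing the associator to vanish. For the inductive step, the Moufang identities are used to rewrite arbitrary bracketings of a word in $a, b$ into a canonical form (say, left-normed), while the inductive hypothesis supplies associativity for all strictly shorter subproducts that appear along the way.

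The main obstacle is the derivation of the Moufang identities from alternativity and the Teichm\"uller identity. This is where the bulk of the computational labour lies, requiring careful polarization of the alternative identities, systematic use of flexibility, and repeated substitution to cancel unwanted associators via the alternating property. Once the Moufang identities are in place, the induction that reduces arbitrary bracketings of monomials in $a, b$ to a canonical form is conceptually straightforward, though the combinatorial bookkeeping---verifying that every two bracketings of a common word coincide modulo the inductive hypothesis---must be carried out with care.
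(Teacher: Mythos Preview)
The paper does not actually prove this theorem: it is stated with references to \cite[Th.~3.1]{S95} and \cite[Th.~2.2]{SS82} and then used as a black box. Your outline is precisely the classical route those references take---derive the Moufang identities from the alternating associator (via the Teichm\"uller identity or equivalent polarizations), then run an induction on total monomial length to force every bracketing of a word in two generators into a canonical form---so there is no discrepancy in approach, only in the level of detail the paper chose to include.

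One cautionary remark on your inductive step: the reduction to a canonical bracketing is not quite as mechanical as ``rewrite arbitrary bracketings into left-normed form.'' The subtlety is that the Moufang identities move a \emph{repeated} factor across a bracketing, so the induction must be organized so that at each stage one of the two generators $a,b$ appears in a position where a Moufang identity applies. The standard argument (as in Zhevlakov \emph{et al.}) handles this by proving a slightly stronger statement---that any product of $n$ factors from $\{a,b\}$ equals the corresponding right-normed (or left-normed) product---and invoking the Moufang identities together with the inductive hypothesis on all shorter associators simultaneously. Your sketch gestures at this but underplays it; the ``combinatorial bookkeeping'' you mention is the heart of the proof, not an afterthought.
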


\begin{corollary}\label{C:AltFlexi}
Alternative algebras are flexible.
\end{corollary}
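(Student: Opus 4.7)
The plan is to deduce flexibility directly from Artin's theorem (Theorem~\ref{L:2assoc}). Given arbitrary elements $x,y$ of an alternative algebra $A$, consider the subalgebra $B$ of $A$ generated by the pair $\{x,y\}$. Clearly $x$, $y$, and $x$ all lie in $B$. By Artin's theorem, $A$ is diassociative, so by Definition~\ref{D:alternative}(b) the associator $(x,y,z)$ vanishes whenever its three arguments lie in a subalgebra generated by two elements. Applying this with $z=x$ yields $(x,y,x)=0$, i.e., $(xy)x-x(yx)=0$, which is precisely flexibility.

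As an alternative route that avoids invoking Artin's theorem (and serves as a sanity check), one could observe that flexibility is a direct consequence of Lemma~\ref{L:alternative}(d): taking $\pi=(1\ 3)\in S_3$, which has sign $-1$, the identity \eqref{E:AssocAlt} gives $(x_3,x_2,x_1)=-(x_1,x_2,x_3)$. Specializing to $x_1=x_3=x$ and $x_2=y$ produces $(x,y,x)=-(x,y,x)$, hence $2(x,y,x)=0$, and invertibility of $2$ in the base field gives $(x,y,x)=0$. This second route illustrates why the hypothesis that $2$ be invertible is natural here, although the first route via diassociativity is characteristic-free.

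There is essentially no obstacle: the work has already been done in establishing Artin's theorem and in Lemma~\ref{L:alternative}. The corollary is a one-line deduction, and the only choice is which of the two formulations to emphasize. I would present the proof via Artin's theorem, since it is the shortest and places flexibility in its natural conceptual context as a manifestation of the broader diassociativity phenomenon.
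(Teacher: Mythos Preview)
Your proof is correct, and in fact you give both the paper's argument and another. The paper itself does \emph{not} invoke Artin's theorem; it remarks explicitly that Artin's theorem is unnecessary and instead deduces $(x,y,x)=0$ directly from \eqref{E:AssocAlt} with $\pi=(1\ 3)$, which is precisely your ``alternative route.'' So your preferred presentation (via diassociativity) differs from the paper's: yours calls on a substantial structural theorem, while the paper's is a one-line consequence of the alternating behaviour of the associator established in Lemma~\ref{L:alternative}(d). The trade-off is the one you already identified: the associator route is lighter but uses invertibility of $2$ to pass from $2(x,y,x)=0$ to $(x,y,x)=0$, whereas the Artin route is characteristic-free at the cost of invoking a much deeper result. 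Since the paper works throughout over fields in which $2$ is invertible, it opts for the shorter argument.
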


Artin's Theorem is not required for the proof of Corollary~\ref{C:AltFlexi}: It suffices to deduce $(x,y,x)=0$ from \eqref{E:AssocAlt} with $\pi=(1\ 3)$. 
An additional consequence of alternativity is as follows.

\begin{lemma}\label{L:Moufang}\cite[(3.6)]{S95}\cite[Lemma~2.7]{SS82}
In an alternative algebra, the \emph{Moufang} identities
\begin{align}\label{E:Moufang1}
(x(yz))x&=x((yz)x)=(xy)(zx)
\end{align}
are satisfied.
\end{lemma}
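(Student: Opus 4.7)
The right-hand equality $(x(yz))x = x((yz)x)$ is immediate from Corollary~\ref{C:AltFlexi}, applied to $x$ and $yz$.

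For the equality $(xy)(zx) = (x(yz))x$, I would first rewrite it as an identity about associators. Expanding both sides by the definition of the associator and applying flexibility to replace $(x(yz))x$ by $x((yz)x)$ gives
\[
(xy)(zx) - (x(yz))x = (x,y,z)\,x - (xy, z, x),
\]
so it suffices to prove $(xy, z, x) = (x,y,z)\,x$.

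For this I would invoke the Teichm\"uller identity
\[
(ab, c, d) - (a, bc, d) + (a, b, cd) = a(b, c, d) + (a, b, c)\,d,
\]
a formal identity valid in every (not necessarily associative) algebra by direct expansion. Specialising $(a,b,c,d) = (x, y, z, x)$ and using $(x, yz, x) = 0$ (flexibility) together with $(y, z, x) = (x, y, z)$ (the cyclic case of Lemma~\ref{L:alternative}(d)) collapses it to
\[
(xy, z, x) + (x, y, zx) = x(x, y, z) + (x, y, z)\,x.
\]
The target $(xy, z, x) = (x,y,z)\,x$ is then equivalent to the linearised Moufang relation $(x, y, zx) = x(x, y, z)$, which I would establish by a further application of Teichm\"uller, for instance with $(a,b,c,d) = (x, x, y, z)$ (exploiting the left alternative laws $(x,x,\cdot) = 0$), combined with the linearised alternative identities $(a, b, c) = -(b, a, c) = -(a, c, b)$ from the proof of Lemma~\ref{L:alternative} to reshuffle associator arguments.

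The main obstacle is the combinatorial bookkeeping: alternativity does not allow rebracketing at will, so every intermediate manipulation produces an associator that must later cancel. The alternating property of the associator (Lemma~\ref{L:alternative}(d)) is what keeps this calculation finite; once $(xy, z, x) = (x, y, z)\,x$ is obtained, the middle Moufang identity follows at once from the reduction above.
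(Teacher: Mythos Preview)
The paper does not prove this lemma; it merely cites Schafer and Zhevlakov~\textit{et al.} So there is no in-paper argument to compare against, and your outline must stand on its own.

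Your first two moves are clean and correct: flexibility gives the left equality, and the reduction
\[
(xy)(zx) - (x(yz))x = (x,y,z)\,x - (xy,z,x)
\]
together with the Teichm\"uller instance $(x,y,z,x)$ does collapse the problem to showing $(x,y,zx) = x(x,y,z)$ (equivalently $(xy,z,x)=(x,y,z)x$). That is exactly the right intermediate target.

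The gap is in your last paragraph. The single Teichm\"uller instance $(x,x,y,z)$ you name yields only
\[
(x^2,y,z)-(x,xy,z)=x(x,y,z),
\]
and no amount of reshuffling the associator arguments with the alternating law turns $(x,xy,z)$ into $(x,y,zx)$: those two expressions are genuinely different, since $xy$ and $zx$ involve different variables. What actually closes the argument is the \emph{companion} instance $(y,z,x,x)$, which (after the same cyclic rewriting you use elsewhere) gives
\[
(x^2,y,z)-(x,y,zx)=(x,y,z)\,x.
\]
Adding these two displayed equations to your relation $(xy,z,x)+(x,y,zx)=x(x,y,z)+(x,y,z)x$ yields $2(x^2,y,z)=2\bigl(x(x,y,z)+(x,y,z)x\bigr)$; since $2$ is invertible, substituting back gives $(x,y,zx)=x(x,y,z)$ as required. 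So your strategy is sound, but you need \emph{two} further Teichm\"uller specialisations, not one, and the ``reshuffling'' alone will not bridge the gap.
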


Now consider the free unital magma $X^\dag$ over a set $X$. For a field $F$, take the magma algebra $FX^\dag$ over $F$. Note that it contains the $F$-span $FX$ of $X$ as a subset, and a copy $F=F1$ of the field. The algebra $FX^\dag$ may be interpreted as the algebra of ``non-commutative and non-associative polynomials'' over $X$ with coefficients in $F$.

Consider the elements
$$
\lambda(x,y)=(xx)y-x(xy)
\
\mbox{ and }
\
\rho(x,y)=y(xx)-(yx)x
$$
of $F\set{x,y}^\dag$. Note that an $F$-algebra $A$ is alternative if and only if $\lambda(x,y)=0$ and $\rho(x,y)=0$ for all $x,y\in A$. More generally, consider the ideal $\mathcal I(A)$ generated by the set
$$
\set{\lambda(x,y),\rho(x,y)|x,y\in A}
$$
in any $F$-algebra $A$. Then the quotient $A/\mathcal I(A)$ is alternative.

\begin{definition}\label{D:FreeAltX}
Let $X$ be a set. Then the \emph{free alternative algebra} $\mathrm{Alt}(X)$ over $X$ is defined as the quotient $FX^\dag/\mathcal I(FX^\dag)$, taken with the \emph{insertion}
\begin{equation}\label{E:FreeAltX}
\eta_X\colon X\to\mathrm{Alt}(X); x\mapsto x+\mathcal I(FX^\dag)
\end{equation}
of the \emph{generating set} $X$.
\end{definition}

\begin{theorem}\cite[Th.~1.2]{SS82}\label{T:FreeAltX}\\
$(\mathrm a)$
The insertion $\eta_X$ is injective.
\vskip 2mm
\noindent
$(\mathrm b)$
Suppose that $\varphi\colon X\to A$ is a function from $X$ to (the underlying set of) an alternative $F$-algebra $A$. Then there is a uniquely defined homomorphism 
$\Phi\colon\mathrm{Alt}(X)\to A$  such that the diagram 
$$
\xymatrix{
\mathrm{Alt}(X)
\ar@{-->}[dr]^{\Phi}
\\
X
\ar[r]_-{\varphi}
\ar[u]^{\eta_X}
&
A
}
$$
commutes.
\end{theorem}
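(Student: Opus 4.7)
The plan is to derive both parts from the universal properties already built into the construction, pushing uniqueness down from the free magma algebra $FX^\dag$ to the quotient $\mathrm{Alt}(X)$.

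For part (b), I first invoke the universal property of the free unital magma $X^\dag$ and then that of the free $F$-module on $X^\dag$ to obtain a unique magma-algebra homomorphism $\widetilde\varphi\colon FX^\dag\to A$ extending $\varphi$. (Here by ``magma-algebra homomorphism'' I mean an $F$-linear map preserving the nonassociative, noncommutative multiplication and the unit.) The next step is to show that $\mathcal I(FX^\dag)\subseteq\ker\widetilde\varphi$, so that $\widetilde\varphi$ passes to the quotient. Since $\widetilde\varphi$ is a magma-algebra homomorphism, for any $p,q\in FX^\dag$ we have
\[
\widetilde\varphi\bigl(\lambda(p,q)\bigr)=\lambda\bigl(\widetilde\varphi(p),\widetilde\varphi(q)\bigr)=0
\]
because $A$ is alternative, and likewise $\widetilde\varphi(\rho(p,q))=0$. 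Thus every generator of $\mathcal I(FX^\dag)$ is annihilated, so the whole ideal is killed. The universal property of the quotient then yields a unique $\Phi\colon\mathrm{Alt}(X)\to A$ with $\Phi\circ\pi=\widetilde\varphi$, where $\pi\colon FX^\dag\to\mathrm{Alt}(X)$ is the canonical projection. By construction $\Phi\circ\eta_X=\Phi\circ\pi\circ\iota=\widetilde\varphi\circ\iota=\varphi$, where $\iota\colon X\hookrightarrow FX^\dag$ is the generator inclusion.

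Uniqueness of $\Phi$ follows because $\eta_X(X)$ generates $\mathrm{Alt}(X)$ as an $F$-algebra: any $\Phi'$ extending $\varphi$ agrees with $\Phi$ on $\eta_X(X)$, hence on the subalgebra generated by $\eta_X(X)$, which is all of $\mathrm{Alt}(X)$. Alternatively, $\Phi'\circ\pi$ is a magma-algebra extension of $\varphi$ to $FX^\dag$, so coincides with $\widetilde\varphi=\Phi\circ\pi$, and surjectivity of $\pi$ forces $\Phi'=\Phi$.

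For part (a), I apply (b) to any known injection of $X$ into an alternative algebra. A convenient choice is the commutative polynomial algebra $A=F[X]$, which is associative (hence alternative), together with the obvious injection $\varphi\colon X\hookrightarrow F[X]$. The resulting $\Phi\colon\mathrm{Alt}(X)\to F[X]$ satisfies $\Phi\circ\eta_X=\varphi$; since $\varphi$ is injective, $\eta_X$ must be injective as well. The main (mild) obstacle is the bookkeeping in verifying that $\widetilde\varphi$ really does kill every element of $\mathcal I(FX^\dag)$ rather than only the distinguished generators $\lambda(x,y),\rho(x,y)$ for $x,y\in X$; this is handled uniformly by observing that the generating set taken in Definition~\ref{D:FreeAltX} ranges over \emph{all} pairs $x,y\in FX^\dag$, so the argument above applies verbatim.
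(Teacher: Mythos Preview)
The paper does not actually supply its own proof of this theorem: it is stated with the citation \cite[Th.~1.2]{SS82} and no argument is given. So there is nothing in the paper to compare your proposal against directly.

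That said, your argument is correct and is essentially the standard one. The factorization in~(b) is routine once you observe, as you do, that the ideal $\mathcal I(FX^\dag)$ in Definition~\ref{D:FreeAltX} is generated by $\lambda(p,q),\rho(p,q)$ with $p,q$ ranging over \emph{all} of $FX^\dag$, not just over $X$; this makes the verification that $\widetilde\varphi$ kills the ideal immediate. Your proof of~(a) via the polynomial algebra $F[X]$ is a clean way to exhibit an alternative algebra into which $X$ injects, and the left-inverse argument is sound. Nothing further is needed.
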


The universality property of Theorem~\ref{T:FreeAltX} specifies $\mathrm{Alt}(X)$ up to isomorphism. For a vector space $V$ over $F$, with basis $X$, we may thus write $\mathrm{Alt}[V]$ for $\mathrm{Alt}(X)$, and reformulate the universality property as follows. Note that we are identifying $V$ with its isomorphic image in $\mathrm{Alt}[V]$ under the $F$-linear extension of the injective function $\eta_X$.

\begin{corollary}\label{L:UPFA}
Let $V$ be a vector space. Let $\phi:V\to A$ be a linear map from $V$ into an alternative algebra $A$. Then there exists a unique algebra homomorphism $\Phi\colon\mathrm{Alt}[V]\to A$  such that the diagram 
$$
\xymatrix{
\mathrm{Alt}[V]
\ar@{-->}[dr]^{\Phi}
\\
V
\ar[r]_-{\phi}
\ar@{^(->}[u]
&
A
}
$$
commutes.
\end{corollary}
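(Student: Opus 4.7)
The plan is to deduce this from the set-theoretic universal property of Theorem~\ref{T:FreeAltX} by choosing a basis of $V$, using the fact that a linear map out of $V$ is determined by its values on any basis. Fix a basis $X$ of $V$, so that by the convention preceding the corollary, $\mathrm{Alt}[V] = \mathrm{Alt}(X)$, and $V$ is identified with the $F$-linear span of $\eta_X(X)$ inside $\mathrm{Alt}(X)$. The restriction $\varphi := \phi \circ \eta_X \colon X \to A$ is a function from a set into the alternative algebra $A$, so Theorem~\ref{T:FreeAltX}(b) supplies a unique algebra homomorphism $\Phi \colon \mathrm{Alt}(X) \to A$ satisfying $\eta_X \circ \Phi = \varphi$.

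It remains to check that this $\Phi$ makes the triangle in the statement commute, i.e.\ that the restriction of $\Phi$ to $V \hookrightarrow \mathrm{Alt}[V]$ equals $\phi$. Both $\Phi|_V$ and $\phi$ are $F$-linear maps $V \to A$ — the first because algebra homomorphisms are linear, the second by hypothesis — and by construction they agree on the basis $X$. Therefore they agree on all of $V$.

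For uniqueness, suppose $\Phi' \colon \mathrm{Alt}[V] \to A$ is any algebra homomorphism with $\Phi'|_V = \phi$. Then $\Phi' \circ \eta_X = \phi|_X = \varphi$, so the uniqueness clause of Theorem~\ref{T:FreeAltX}(b) forces $\Phi' = \Phi$.

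The only subtlety worth flagging is the apparent dependence on the choice of basis $X$: the underlying object $\mathrm{Alt}[V]$ is defined as $\mathrm{Alt}(X)$, and a different basis would give an a priori different algebra. However, the universal property just proved identifies $\mathrm{Alt}[V]$ uniquely up to canonical isomorphism (by the standard Yoneda-style argument: two objects both solving the same universal problem are isomorphic via a unique isomorphism compatible with the insertions), so the notation $\mathrm{Alt}[V]$ is justified and the construction is basis-independent.
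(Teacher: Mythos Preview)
Your proof is correct and follows exactly the paper's approach: restrict $\phi$ to a basis $X$ and invoke Theorem~\ref{T:FreeAltX}(b). The paper's own proof is the one-liner ``Take $\varphi$ in Theorem~\ref{T:FreeAltX}(b) to be the restriction of $\phi$ to $X$,'' and you have simply spelled out the verification of commutativity, uniqueness, and basis-independence that the paper leaves implicit. One minor slip: you write $\eta_X \circ \Phi = \varphi$ where you mean $\Phi \circ \eta_X = \varphi$ (or else you are switching composition conventions mid-proof, since $\varphi := \phi \circ \eta_X$ uses the standard order).
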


\begin{proof}
Take $\varphi$ in Theorem~\ref{T:FreeAltX}(b) to be the restriction of $\phi$ to $X$.
\end{proof}

\begin{definition}\label{D:faacomponents}
For each natural number $n$, define the subspace of the vector space $\mathrm{Alt}[V]$ spanned by products of exactly $n$ elements of $V$ to be $\mathrm{Alt}[V]_n$. 
\end{definition}

We declare that an element of $\mathrm{Alt}[V]$ is \emph{homogeneous of degree} $n$ if it is an element of $\mathrm{Alt}[V]_n$. The concepts of degree and homogeneity are well defined in the free alternative algebra, since the ideal $\mathcal I(FX^\dag)$ in Definition~\ref{D:FreeAltX} is generated by homogeneous elements \cite[Lemma~1.1]{SS82}.

\subsection{Quadratic algebras}\label{SS:QuadrAlg}

Consider an algebra $A$ over a field $F$ in which $2$ is invertible, where $F[a]$ is commutative for each element $a$ of $A$. The algebra is said to be \emph{symmetric quadratic} if there is an involutary anti-automorphism
\begin{equation}\label{E:ConjInQA}
A\to A;a\mapsto a^*
\end{equation}
known as \emph{conjugation}, restricting for each $a\in A$ to an automorphism $F[a]\to F[a];a\mapsto a^*$, such that
\begin{equation}\label{E:minpolya}
(X-a)(X-a^*)=X^2-T(a)X+N(a)\in F[X]
\end{equation}
with $T(1)=2$ and $N(1)=1$. Note that the linear factors on the left hand side of \eqref{E:minpolya} are elements of the polynomial ring over the commutative ring $F[a]$. Equating coefficients in \eqref{E:minpolya} yields
\begin{equation}\label{E:TrcNormF}
T(a)=a+a^*\in F
\
\mbox{ and }
\
N(a)=aa^*\in F
\end{equation}
for each element $a$ of $A$. The maps $T\colon A\to F$ and $N\colon A\to F$ are respectively known as the \emph{trace} and \emph{norm} of the symmetric quadratic algebra $A$.

\begin{example}\label{X:TrivQuad}
The field $F$ itself is symmetric quadratic, with $a^*=a$ for $a\in F$. It forms the \emph{trivial} symmetric quadratic algebra, with $T(a)=2a$ and $N(a)=a^2$ for $a\in F$.
\end{example}

\begin{example}\label{X:ComplexQ}
The $\mathbb R$-algebra $\mathbb C$ is symmetric quadratic, with \eqref{E:ConjInQA} as the usual conjugation of complex numbers. The trace of a complex number $z$ is twice its real part; its norm is the square $|z|^2$ of its modulus.
\end{example}

\begin{example}\label{X:LorenzQA}
The $F$-algebra $F^2$ is symmetric quadratic, with the conjugation $(a,b)\mapsto(a,-b)$. For the trace, we have $T((a,b))=2a$. For the norm, we have $N((a,b))=a^2-b^2$. 
\end{example}

We now relate the class of symmetric quadratic algebras to a more general class.

\begin{definition}\cite[\S1]{JMO}\label{D:JMOquads}
An $F$-algebra is \emph{quadratic} if it is unital, and for each element $a$ of $A$, the set $\set{1,a,a^2}$ is linearly dependent.
\end{definition}

The following structure theorem for quadratic algebras, which is due to J.M.~Osborn, is motivated by the definition
\begin{equation}\label{E13QuPrd}
(t,\mathbf x)(t',\mathbf x')
=(tt'-\mathbf x\cdot\mathbf x',t\mathbf x'+t'\mathbf x+\mathbf x\times\mathbf x')
\end{equation}
of the quaternion product on the set $\mathbb R\oplus\mathbb R^3$, using the usual symmetric scalar product $\mathbf x\cdot\mathbf x'$ and anticommutative vector product $\mathbf x\times\mathbf x'$ of vectors $\mathbf x,\mathbf x'$ in $\mathbb R^3$ (compare \cite[(2)]{QAA}, \cite[(9.33)]{I2A22}).

\begin{theorem}\cite[Th.~1]{JMO}\label{T:JMOquads}
An $F$-algebra $A$ (of dimension $1+n$) is quadratic if and only if it is isomorphic to the algebra defined on the $F$-space $F\oplus E$ by \eqref{E13QuPrd} for \emph{scalars} $t,t'\in F$ and \emph{vectors} $\mathbf x,\mathbf x'$ in an $n$-dimensional $F$-space $E$ equipped with a bilinear but not necessarily symmetric bilinear form $\mathbf x\cdot\mathbf x'$ and an anticommutative nonunital algebra product $\mathbf x\times\mathbf x'$.
\end{theorem}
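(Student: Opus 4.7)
The \emph{if} direction is a direct computation. Using \eqref{E13QuPrd}, one checks that $(1,\mathbf{0})$ is a two-sided identity and that for $a = (t,\mathbf{x})$,
\begin{equation*}
a^2 = (t^2 - \mathbf{x}\cdot\mathbf{x},\ 2t\mathbf{x}),
\end{equation*}
since $\mathbf{x}\times\mathbf{x} = 0$ by anticommutativity (with $2$ invertible). Subtracting $2t\cdot a = (2t^2,\, 2t\mathbf{x})$ yields $a^2 - 2t\,a + (t^2 + \mathbf{x}\cdot\mathbf{x})\cdot 1 = 0$, so $\{1,a,a^2\}$ is linearly dependent and the algebra is quadratic.

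For the converse, my plan is to extract a trace and norm function from the quadratic relation and use them to split $A$. For $a \notin F\cdot 1$, the independence of $\{1,a\}$ forces unique scalars $T(a), N(a) \in F$ with $a^2 = T(a)\,a - N(a)\cdot 1$; I then extend by $T(\lambda\cdot 1) := 2\lambda$ and $N(\lambda\cdot 1) := \lambda^2$. The main obstacle is proving that $T$ is $F$-linear. I would do this by polarization: substituting $a \mapsto a+b$ and $a \mapsto a-b$ in the relation $a^2 - T(a)\,a + N(a)\cdot 1 = 0$, subtracting off the relations for $a$ and $\pm b$ (using $T(-b)=-T(b)$ and $N(-b)=N(b)$, which come from applying the quadratic relation to $-b$), and adding the two results to obtain
\begin{equation*}
0 = \bigl[T(a+b)+T(a-b)-2T(a)\bigr]\,a + \bigl[T(a+b)-T(a-b)-2T(b)\bigr]\,b + c\cdot 1
\end{equation*}
for some $c \in F$. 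Whenever $\{1,a,b\}$ is linearly independent, the two bracketed coefficients must vanish, giving $T(a+b)=T(a)+T(b)$; the linearly dependent cases I handle by writing $b = \alpha\cdot 1 + \beta a$ and combining with the homogeneity $T(\mu a) = \mu T(a)$ that falls out of the quadratic relation for $\mu a$.

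Once $T$ is linear, set $E := \ker T$, so that $T(1) = 2$ produces the direct sum decomposition $A = F\cdot 1 \oplus E$ with $\dim E = n$. For $\mathbf{x},\mathbf{x}' \in E$, expanding $(\mathbf{x}+\mathbf{x}')^2 = -N(\mathbf{x}+\mathbf{x}')\cdot 1$ by means of $\mathbf{x}^2 = -N(\mathbf{x})\cdot 1$ and $\mathbf{x}'^2 = -N(\mathbf{x}')\cdot 1$ yields $\mathbf{x}\mathbf{x}' + \mathbf{x}'\mathbf{x} \in F\cdot 1$. Relative to the direct sum I then define $-\mathbf{x}\cdot\mathbf{x}'$ to be the scalar component of $\mathbf{x}\mathbf{x}'$ and $\mathbf{x}\times\mathbf{x}'$ to be its $E$-component; bilinearity is immediate, and anticommutativity of $\times$ follows because the $E$-components of $\mathbf{x}\mathbf{x}' + \mathbf{x}'\mathbf{x}$ must cancel. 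Finally, expanding $(t\cdot 1 + \mathbf{x})(t'\cdot 1 + \mathbf{x}') = tt'\cdot 1 + t\mathbf{x}' + t'\mathbf{x} + \mathbf{x}\mathbf{x}'$ and sorting the scalar and $E$-parts reproduces \eqref{E13QuPrd}, giving the desired isomorphism.
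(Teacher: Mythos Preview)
Your argument is correct and follows the standard route (essentially Osborn's original proof): extract a trace from the quadratic relation, show it is linear by polarization with case analysis on dependence, and decompose products of pure vectors into scalar and vector parts. One small point worth making explicit in the dependent case is that the formula $T(\alpha\cdot 1 + \gamma a) = 2\alpha + \gamma T(a)$ (for $a\notin F\cdot 1$, $\gamma\ne 0$) follows by expanding $(\alpha\cdot 1 + \gamma a)^2$ directly; you allude to this but a reader might appreciate seeing it, since it is what makes your extension $T(\lambda\cdot 1)=2\lambda$ consistent with the generic definition.

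As for comparison: the paper does not actually prove Theorem~\ref{T:JMOquads}. It is stated with a citation to Osborn \cite[Th.~1]{JMO} and used as a black box, so there is no in-paper proof to compare against. Your write-up would serve as a self-contained replacement for that citation.
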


We will now specialize Osborn's Theorem to a structure theorem for symmetric quadratic algebras, using a lemma noted by Osborn.

\begin{lemma}\cite[p.203]{JMO}\label{L:SymQuSym}
In the context of Theorem~\ref{T:JMOquads}, the map
\begin{equation}\label{E:InvFplsE}
F\oplus E\to F\oplus E\colon(t,\mathbf x)\mapsto(t,-\mathbf x)
\end{equation}
is an involution if and only if the scalar product $\mathbf x\cdot\mathbf x'$ is symmetric. In particular, these equivalent conditions hold when $A$ is flexible.
\end{lemma}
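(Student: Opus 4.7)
The plan is to verify the equivalence directly by computation, and then to exploit the particular form of the flexibility identity to force symmetry of the scalar product.

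First I would observe that the map $J\colon(t,\mathbf{x})\mapsto(t,-\mathbf{x})$ is manifestly a linear involution of $F\oplus E$, so the only content is whether it reverses multiplication. Using \eqref{E13QuPrd} I would compute
\[
J\bigl((t,\mathbf{x})(t',\mathbf{x}')\bigr)=\bigl(tt'-\mathbf{x}\cdot\mathbf{x}',\,-t\mathbf{x}'-t'\mathbf{x}-\mathbf{x}\times\mathbf{x}'\bigr)
\]
and
\[
J(t',\mathbf{x}')\cdot J(t,\mathbf{x})=(t',-\mathbf{x}')(t,-\mathbf{x})=\bigl(t't-\mathbf{x}'\cdot\mathbf{x},\,-t'\mathbf{x}-t\mathbf{x}'+\mathbf{x}'\times\mathbf{x}\bigr).
\]
The vector components automatically agree because $\mathbf{x}\times\mathbf{x}'$ is anticommutative, so $-\mathbf{x}\times\mathbf{x}'=\mathbf{x}'\times\mathbf{x}$. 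The scalar components agree for all $\mathbf{x},\mathbf{x}'\in E$ if and only if $\mathbf{x}\cdot\mathbf{x}'=\mathbf{x}'\cdot\mathbf{x}$, establishing the equivalence.

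For the ``in particular'' clause, I would specialize the flexibility identity $(ab)a=a(ba)$ to $a=(0,\mathbf{x})$, $b=(0,\mathbf{y})$, where all scalar parts vanish, so \eqref{E13QuPrd} simplifies. A direct calculation gives
\[
(ab)a=\bigl(-(\mathbf{x}\times\mathbf{y})\cdot\mathbf{x},\,-(\mathbf{x}\cdot\mathbf{y})\mathbf{x}+(\mathbf{x}\times\mathbf{y})\times\mathbf{x}\bigr),
\]
\[
a(ba)=\bigl(-\mathbf{x}\cdot(\mathbf{y}\times\mathbf{x}),\,-(\mathbf{y}\cdot\mathbf{x})\mathbf{x}+\mathbf{x}\times(\mathbf{y}\times\mathbf{x})\bigr).
\]
By anticommutativity of $\times$, one has $\mathbf{x}\times(\mathbf{y}\times\mathbf{x})=(\mathbf{x}\times\mathbf{y})\times\mathbf{x}$, so the ``cross'' terms cancel when we equate the $E$-components of $(ab)a$ and $a(ba)$. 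The surviving equation is
\[
\bigl(\mathbf{x}\cdot\mathbf{y}-\mathbf{y}\cdot\mathbf{x}\bigr)\mathbf{x}=0
\]
in $E$, for every $\mathbf{x},\mathbf{y}\in E$. Running $\mathbf{x}$ over a basis of $E$ and invoking bilinearity then yields $\mathbf{x}\cdot\mathbf{y}=\mathbf{y}\cdot\mathbf{x}$ identically.

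The computations are short and the only real subtlety is keeping the anticommutative identity for the triple cross product in view, so as to see that flexibility squeezes out precisely the antisymmetric part of $\cdot$ and nothing else. No genuine obstacle is anticipated.
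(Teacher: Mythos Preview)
The paper does not supply a proof of this lemma; it is simply attributed to Osborn \cite[p.203]{JMO}. Your argument is correct and is essentially the direct verification one would expect: comparing $J(ab)$ with $J(b)J(a)$ isolates $\mathbf{x}\cdot\mathbf{x}'-\mathbf{x}'\cdot\mathbf{x}$ in the scalar slot, and specializing flexibility to pure vectors forces the antisymmetric part of $\cdot$ to vanish. One minor comment: the step ``running $\mathbf{x}$ over a basis and invoking bilinearity'' is more than you need---once you know $(\mathbf{x}\cdot\mathbf{y}-\mathbf{y}\cdot\mathbf{x})\mathbf{x}=0$ in the vector space $E$, any single nonzero choice of $\mathbf{x}$ already gives $\mathbf{x}\cdot\mathbf{y}=\mathbf{y}\cdot\mathbf{x}$ for all $\mathbf{y}$, and the case $\mathbf{x}=0$ is trivial.
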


\begin{theorem}\label{T:SymQuSym}
In the context of Theorem~\ref{T:JMOquads}, the $F$-algebra $A$ is symmetric quadratic if and only if the scalar product $\mathbf x\cdot\mathbf x'$ is symmetric.
\end{theorem}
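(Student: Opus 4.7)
The plan is to pin down the candidate conjugation on $F \oplus E$ explicitly and then invoke Lemma~\ref{L:SymQuSym} to translate between the anti-automorphism property and the symmetry of the scalar product. The reverse direction then amounts to a routine verification, while the forward direction requires showing that the given abstract conjugation must coincide with the explicit involution $(t, \mathbf{x}) \mapsto (t, -\mathbf{x})$ of Lemma~\ref{L:SymQuSym}.

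For the forward direction, suppose $A$ carries a conjugation $*$ witnessing its being symmetric quadratic. Because $1^* \in F[1] = F$ and $1 + 1^* = T(1) = 2$, we have $1^* = 1$; as $*$ is an $F$-linear anti-automorphism it fixes every scalar, so $T \colon A \to F$ is $F$-linear. For a pure vector $a = (0, \mathbf{x})$, write $a^* = (s, \mathbf{y})$: the requirement $a + a^* = T(a)\cdot 1 \in F$ forces $\mathbf{y} = -\mathbf{x}$ and $s = T(a)$. Substituting into the identity $a^2 - T(a)a + N(a) = 0$, obtained by evaluating $(X-a)(X-a^*)$ at $X = a$, and using \eqref{E13QuPrd} together with $\mathbf{x} \times \mathbf{x} = 0$ (valid because $\times$ is anticommutative and $2$ is invertible), the $E$-component reduces to $-s\mathbf{x} = 0$. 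Hence $s = 0$ whenever $\mathbf{x} \neq 0$, and $F$-linearity of $T$ extends this to $T \equiv 0$ on $E$. Thus $(t, \mathbf{x})^* = (t, -\mathbf{x})$ exactly; as this map is an anti-automorphism by hypothesis, Lemma~\ref{L:SymQuSym} yields the symmetry of $\cdot$.

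For the reverse direction, assume $\cdot$ is symmetric and set $(t, \mathbf{x})^* = (t, -\mathbf{x})$, $T((t, \mathbf{x})) = 2t$, and $N((t, \mathbf{x})) = t^2 + \mathbf{x} \cdot \mathbf{x}$. Lemma~\ref{L:SymQuSym} already supplies that $*$ is an involutory anti-automorphism. A direct computation from \eqref{E13QuPrd}, using $\mathbf{x} \times (-\mathbf{x}) = 0$, yields $a + a^* = T(a) \cdot 1$ and $aa^* = N(a) \cdot 1$, whence $a^* = T(a) - a \in F + Fa$. Similarly $a^2 = T(a)a - N(a) \in F + Fa$, so $F[a] = F + Fa$ is commutative; the restriction of $*$ to $F[a]$ is therefore an automorphism, and $(X - a)(X - a^*) = X^2 - T(a)X + N(a)$ holds in $F[a][X]$, while $T(1) = 2$ and $N(1) = 1$ are immediate. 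The main obstacle is the forward direction's uniqueness of conjugation: one must rule out that the abstract $*$ could send a pure vector to an element with nonzero scalar part, and this is precisely where the minimal polynomial identity, combined with the anticommutativity of $\times$, is essential.
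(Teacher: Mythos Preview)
Your proof is correct and follows the same route as the paper's, hinging on Lemma~\ref{L:SymQuSym} in both directions. The paper's forward argument simply asserts that the given conjugation on $A$ transfers to the map \eqref{E:InvFplsE} under the Osborn decomposition, whereas you supply an explicit justification (via $a+a^*\in F$ and the quadratic identity $a^2 - T(a)a + N(a) = 0$, using $\mathbf x\times\mathbf x=0$) that this is forced; your version is thus more self-contained on that point, while the reverse direction is handled identically in spirit.
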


\begin{proof}
The quadratic algebra $F\oplus E$ defined by a symmetric scalar product is readily confirmed to be a symmetric quadratic algebra with \eqref{E:InvFplsE} as its conjugation. The computations formally track the proof that the real quaternion algebra, implemented with the product \eqref{E13QuPrd} on $\mathbb R\oplus\mathbb R^3$, is a symmetric quadratic algebra.

Conversely, the relation \eqref{E:minpolya} immediately implies that a symmetric quadratic algebra $A$ is quadratic according to Definition~\ref{D:JMOquads}. Applying Theorem~\ref{T:JMOquads} to the quadratic algebra $A$, the involutory conjugation $*\colon A\to A$ transfers to the map \eqref{E:InvFplsE} of $F\oplus E$. Thus, this map is also an involution. Lemma~\ref{L:SymQuSym} then shows that $\mathbf x\cdot\mathbf x'$ is symmetric.
\end{proof}

\subsection{Clifford algebras}\label{SS:CliffAlg}

For our brief summary of Clifford algebras, we will assume that $F$ is a field in which $2$ is invertible.

\begin{definition}\label{D:FormedSp}
A finite-dimensional vector space $V$ over $F$ is said to be a \emph{formed space} $(V,B)$ if it is equipped with a symmetric, bilinear form $B$.
\end{definition}

Since $2$ is invertible, the bilinear form $B(x,y)$ on a formed space $(V,B)$ is equivalent to the quadratic form $Q(x)=\frac12B(x,x)$, using the polarization $B(x,y)=$
\begin{align*}
&\tfrac12[B(x,x)+B(x,y)+B(y,x)+B(y,y)]-\tfrac12B(x,x)-\tfrac12B(y,y)
\\
&
=\tfrac12B(x+y,x+y)-\tfrac12B(x,x)-\tfrac12B(y,y)
=Q(x+y)-Q(x)-Q(y)
\end{align*}
to recover the bilinear form from the quadratic form. We define Clifford algebras as solutions to a universality property.

\begin{definition}
Suppose that $(V,B)$ is a formed space. Then the \emph{Clifford algebra} $\mathrm{Cl}(V,B)$ satisfies the following universality property. Suppose that $A$ is an associative $F$-algebra, and that $\theta\colon V\to A;v\mapsto v'$ is a linear map such that
 $(u')^2=Q(u)\cdot 1$ or
\begin{align}\label{E:MatchCon}
u'v'+v'u'&=B(u,v)\cdot 1
\end{align}
for all $u,v\in V$ --- compare \eqref{E:CliffRel}. Then there exists a unique $F$-algebra homomorphism $\Theta\colon \mathrm{Cl}(V,B)\to A$ such that the diagram
$$
\xymatrix{
\mathrm{Cl}(V,B)
\ar@{-->}[dr]^{\Theta}
\\
V
\ar[u]^{\iota}
\ar[r]_{\theta}
&
A
}
$$
commutes.
\end{definition}

\begin{remark}
There are a number of different possible conventions for the relation of the quadratic or bilinear forms to the multiplication in the Clifford algebra. In particular, some are designed to handle fields of characteristic $2$. Our conventions follow \cite[p.103 (1),(2)]{Chevalley2}. To obtain $\mathbb C$ as a real Clifford algebra, our convention is chosen to produce $i^2=Q(i)=-1$, corresponding to $B(i,i)=ii+ii=-2$. This convention simplifies relations such as \eqref{E:MatchCon}. Moreover, when extended to Clifford-like alternative algebras over general fields where $2$ is invertible, it produces simple products like those exhibited in Table~\ref{Tb:Mlt8-dim} below, and facilitates the statement and proof of Theorem~\ref{T:reductionstep}. 
\end{remark}

Explicitly, as was discussed in the introduction, the Clifford algebra $\mathrm{Cl}(V,B)$ may be constructed as the quotient of the tensor algebra $TV$ over $V$ by the ideal \eqref{E:ClTAlgId}of $TV$. 
When the formed space $(V,B)$ has dimension $n$, then the Clifford algebra $\mathrm{Cl}(V,B)$ has dimension $2^n$, and includes (a faithful copy of) the vector space $V$ \cite[p.104]{Chevalley2}. The examples of real Clifford algebras that are relevant to this paper are displayed in the diagram \eqref{E:ClifCayl} below, using the notation \eqref{E:Cb0b1b-1}.

The tensor algebra $TV$ is graded by $\mathbb N$. For each natural number $n$, its homogeneous component $TV_n$ consists of linear combinations of products of $n$ vectors from $V$. However, since the Clifford relators \eqref{E:CliffRel} are not homogeneous, the quotient $\mathrm{Cl}(V,B)$ is not $\mathbb N$-graded. Even so, the Clifford relators are linear combinations of evenly graded elements, so the quotient is $\mathbb Z/_2$-graded, becoming a \emph{superalgebra}. The (copy of the) field $F$ is even, while (the copy of) $V$ is odd.

\subsection{The Cayley-Dickson process}\label{SS:Cayl-Dic}

We consider the Cayley-Dickson process, in  Albert's broad sense with a parameter $\gamma$ as below \cite[(58)]{AAA42}, \cite[\S3.1]{DSVM}, \cite[(3)]{Schafer43}, \cite[(1)]{Schafer54}, \cite[\S2.2]{SS82}. Noting the detailed differences between the various presentations of the process, we are following the formulation of \cite[\S3.1]{DSVM}, \cite[(3)]{Schafer43} and \cite[\S2.2]{SS82} in this paper, as displayed in \eqref{E:GenCa-Di}. We again take $F$ to be a field where $2$ is invertible.

\begin{definition}\label{D:Cayley}
Suppose that $A$ is a symmetric quadratic $F$-algebra. Define the multiplication
$
(a,b)\cdot(c,d)=(ac+\gamma d^*b, da+bc^*)
$
on $A^2$, where $\gamma$ is an element of $F$. Write $A^\gamma$ for the resulting algebra structure on the vector space $A^2$. This construction of $A^\gamma$ from $A$ is the \emph{Cayley-Dickson process} (as presented by \eqref{E:GenCa-Di} in the real case), and the resulting algebra $A^\gamma$ is described as a \emph{Cayley-Dickson algebra}.
\end{definition}

The following proposition lists key properties of the Cayley-Dickson process. Together with Definition~\ref{D:Cayley}, Proposition~\ref{P:ropofC-D}(a) ensures the iterative nature of the process.

\begin{proposition}\label{P:ropofC-D}
Consider the context of Definition~\ref{D:Cayley}.
\vskip 2mm
\noindent
$(\mathrm a)$
With conjugation
$
*\colon A^\gamma\to A^\gamma;(a,b)\mapsto(a^*,-b)
$
and
\begin{equation}\label{E:TrNrGamm}
T((a,b))=T(a)\in F\,,
\quad
\
N((a,b))=N(a)-\gamma N(b)\in F\,,
\end{equation}
the algebra $A^\gamma$ is a symmetric quadratic $F$-algebra.
\vskip 2mm
\noindent
$(\mathrm b)$
The algebra $A^\gamma$ is commutative if and only if $A$ is commutative and its conjugation is trivial: $*\colon A\to A;a\mapsto a$.
\vskip 2mm
\noindent
$(\mathrm c)$
The algebra $A^\gamma$ is associative if and only if $A$ is associative and commutative.
\vskip 2mm
\noindent
$(\mathrm d)$
The algebra $A^\gamma$ is alternative if and only if $A$ is associative.
\end{proposition}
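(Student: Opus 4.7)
My plan is to verify the four claims in order by direct computation in $A^\gamma$, using the Cayley-Dickson multiplication $(a,b)(c,d)=(ac+\gamma d^*b,\,da+bc^*)$ together with the defining identities $a+a^*=T(a)\in F$ and $aa^*=a^*a=N(a)\in F$ of the symmetric quadratic algebra $A$.

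For (a), I would first check that $(a,b)\mapsto(a^*,-b)$ is an $F$-linear involution and anti-automorphism of $A^\gamma$, by matching coordinates of $((a,b)(c,d))^*$ against $(c,d)^*(a,b)^*$; the match uses only involutivity and anti-multiplicativity of $*$ on $A$. Next I compute $(a,b)+(a,b)^*=(T(a),0)$ and $(a,b)(a,b)^*=(aa^*-\gamma b^*b,\,0)=(a,b)^*(a,b)$, obtaining the asserted trace and norm in the identified copy of $F$. Since $(a,b)^*=T(a)\cdot 1-(a,b)$ lies in $F\oplus F(a,b)$, the subalgebra $F[(a,b)]$ is at most two-dimensional and therefore commutative, so the minimal-polynomial relation $(X-(a,b))(X-(a,b)^*)=X^2-T((a,b))X+N((a,b))$ makes sense and holds in $F[(a,b)][X]$. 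The normalizations $T(1)=2$ and $N(1)=1$ descend from those on $A$.

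For (b) and (c), I would compute the commutator and the associator on $A^\gamma$ coordinatewise. Setting $b=d=0$ in each expression forces $A$ to be associative (respectively commutative), while further specializations involving second-coordinate entries, for example $a=c=0$ with $d=1$, produce expressions like $\gamma(fb-bf)$ in the associator and $b(c^*-c)$ in the commutator, forcing commutativity of $A$ and triviality of $*$ in the respective settings. The converses follow by direct expansion once these hypotheses are assumed: under commutativity with trivial conjugation on $A$, all conjugations trivialize and all cross-terms cancel.

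For (d), I would invoke Lemma~\ref{L:alternative} to reduce alternativity of $A^\gamma$ to the pair $(x,x,y)=0=(y,x,x)$, expand both in $A^\gamma$, and show that associativity of $A$ alone forces every resulting term to cancel, with the anti-automorphism $*$ mediating the rebracketing that commutativity would otherwise perform. The necessity direction is again the $b=d=f=0$ specialization used above. The principal obstacle is the sufficient direction: the expansion produces mixed contributions such as $\gamma b^*(ba)$, $\gamma(ab)b^*$, and $N(b)a$, whose cancellation relies on balancing conjugated with unconjugated products via $bb^*=b^*b=N(b)\in F$, without ever invoking $ab=ba$. Careful bookkeeping of which factors acquire or shed the involution as they cross between coordinates is what distinguishes this from the associative-and-commutative analysis of (c), and is where the distinctive features of the Cayley-Dickson process actually do their work.
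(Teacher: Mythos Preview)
The paper does not actually prove Proposition~\ref{P:ropofC-D}: it is stated as a standard result, with the surrounding text pointing to the literature (Albert, Schafer, and \cite{SS82}) for the Cayley-Dickson process. So there is no ``paper's own proof'' to compare against; your outline is essentially the textbook computation one finds in those references.

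Your sketch is sound in outline, but one point deserves care. In (c) and (d) you propose the specialization $a=c=0$, $d=1$ to obtain a term like $\gamma(fb-bf)$ and thereby force commutativity or associativity of $A$. When $\gamma=0$ this particular specialization is vacuous, yet the proposition is still claimed for all $\gamma\in F$. You therefore need at least one specialization that does not carry a factor of $\gamma$; for instance, in the associator with $A$ already associative, setting $b=d=0$ and $f=1$ in the second coordinate yields $[a,c]$ directly, independent of $\gamma$. Since you wrote ``for example,'' this is more a warning about which example to display than a genuine gap, but in a full write-up you should make sure the $\gamma=0$ case is covered.
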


The following lemma shows that when the base field of $A$ is $\mathbb{R}$,  it is sufficient to limit ourselves to the three cases where $\gamma\in\set{0,\pm1}$.

\begin{lemma}
Consider an $\mathbb R$-algebra $A$. If $0\ne\gamma\in\mathbb R$, then
$$
A^{\gamma/|\gamma|}\to A^\gamma;(1,0)\mapsto(1,0)\,,\ (0,1)\mapsto(0,|\gamma|^{-1/2})
$$
is an isomorphism.
\end{lemma}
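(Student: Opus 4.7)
Let $\varepsilon = \gamma/|\gamma| \in \{\pm 1\}$ and $s = |\gamma|^{-1/2} \in \mathbb{R}$, so that $\gamma s^{2} = \varepsilon$. The plan is to exhibit the map of the statement as the $\mathbb{R}$-linear extension of the prescribed assignment on the generators $(1,0)$ and $(0,1)$, namely
\[
\varphi\colon A^{\varepsilon}\to A^{\gamma};\quad (a,b)\mapsto (a,\,sb),
\]
and then verify that $\varphi$ is an algebra isomorphism.

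First I would observe that $\varphi$ is bijective and $\mathbb{R}$-linear on the underlying vector space $A^{2}$, with inverse $(a,b)\mapsto(a,|\gamma|^{1/2}b)$; unital-ness $\varphi(1,0)=(1,0)$ is immediate. The only nontrivial task is multiplicativity, and this reduces to a single computation using Definition~\ref{D:Cayley}. On the one hand, applying $\varphi$ after the product in $A^{\varepsilon}$ gives
\[
\varphi\bigl((a,b)(c,d)\bigr)=\varphi\bigl(ac+\varepsilon d^{*}b,\,da+bc^{*}\bigr)=\bigl(ac+\varepsilon d^{*}b,\,s(da+bc^{*})\bigr).
\]
On the other hand, taking the product in $A^{\gamma}$ after applying $\varphi$ gives
\[
\varphi(a,b)\cdot\varphi(c,d)=(a,sb)(c,sd)=\bigl(ac+\gamma (sd)^{*}(sb),\,(sd)a+(sb)c^{*}\bigr).
\]
Since $s\in\mathbb{R}$ is central and fixed by the conjugation of $A$ (as $A$ is a symmetric quadratic $\mathbb{R}$-algebra extending the trivial symmetric quadratic $\mathbb{R}$ of Example~\ref{X:TrivQuad}), we have $(sd)^{*}=sd^{*}$, whence the first component simplifies to $ac+\gamma s^{2} d^{*}b=ac+\varepsilon d^{*}b$, and the second to $sda+sbc^{*}$. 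The two expressions for $\varphi((a,b)(c,d))$ and $\varphi(a,b)\cdot\varphi(c,d)$ therefore agree.

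The only place where anything at all is being used is the identity $\gamma s^{2}=\varepsilon$, which is the raison d'être of the definition of $s$; accordingly there is no substantive obstacle, and the argument is essentially a one-line bookkeeping verification. No appeal to alternativity, associativity, or any structural property of $A$ beyond its being a symmetric quadratic $\mathbb{R}$-algebra is required, which is why the conclusion holds at the level of generality asserted.
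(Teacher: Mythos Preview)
Your proof is correct; the paper states this lemma without proof, and your direct verification via the linear extension $(a,b)\mapsto(a,sb)$ together with the identity $\gamma s^2=\varepsilon$ is exactly the natural argument one would supply.
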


\subsection{Clifford algebras and the Cayley-Dickson process}

All the $1$-, $2$-, and $4$-dimensional real Clifford algebras may be construed as real Cayley-Dickson algebras, as displayed in the following diagram:
\begin{equation}\label{E:ClifCayl}
\xymatrix{
&
\protect{
\boxed{
\begin{matrix}
\Lambda(\mathbb R^2)=\\
\mathrm{Cl}[2,0,0]
\end{matrix}
}
}
\\
\boxed{
\mathrm{Cl}[1,1,0]
}
&
\protect{
\boxed{
\begin{matrix}
\Lambda(\mathbb R)=\\
\mathrm{Cl}[1,0,0]
\end{matrix}
}
}
\ar[r]^-{-1}
\ar[l]_-{1}
\ar[u]^-{0}
&
\boxed{
\mathrm{Cl}[1,0,1]
}
\\
\protect{
\boxed{
\begin{matrix}
\mathbb R^2=\\
\mathrm{Cl}[0,1,0]
\end{matrix}
}
}
\ar[u]^-{0}
\ar@<.7ex>[d]^-{-1}
\ar@<-.7ex>[d]_-{1}
&
\protect{
\boxed{
\begin{matrix}
\mathbb R=\\
\mathrm{Cl}[0,0,0]
\end{matrix}
}
}
\ar[r]^-{-1}
\ar[l]_-{1}
\ar[u]^-{0}
&
\protect{
\boxed{
\begin{matrix}
\mathbb C=\\
\mathrm{Cl}[0,0,1]
\end{matrix}
}
}
\ar[u]_-{0}
\ar[d]^-{-1}
\ar@/^2pc/[dll]^-{1}
\\
\protect{
\boxed{
\begin{matrix}
\mathbb R_2^2=\\
\mathrm{Cl}[0,2,0]
=\\
\mathrm{Cl}[0,1,1]
\end{matrix}
}
}
&
&
\protect{
\boxed{
\begin{matrix}
\mathbb H=\\
\mathrm{Cl}[0,0,2]
\end{matrix}
}
}
}
\end{equation}
Here, each arrow
$
\xymatrix{
A
\ar[r]^{\gamma}
&
A^\gamma
}
$
records an instance of the Cayley-Dickson process. The notation 
\begin{equation}\label{E:Cb0b1b-1} 
\mathrm{Cl}[b_0,b_1,b_{-1}]\,,
\end{equation}
with natural numbers  $b_0,b_1,b_{-1}$, denotes the Clifford algebra of the real formed space $(V,B)$ of dimension $b_0+b_1+b_{-1}$ having an orthogonal basis with $b_l$ elements $e$ satisfying $B(e,e)=2l$, for each ``length'' $l$ in $\set{0,\pm1}$.  The algebra $\mathbb R_2^2$ is the Clifford algebra $\mathrm{Cl}[0,2,0]\cong\mathrm{Cl}[0,1,1]$ of real $2\times 2$-matrices. The Clifford algebras $\mathrm{Cl}[1,0,0]$ and $\mathrm{Cl}[2,0,0]$, over $\mathbb R$ and $\mathbb R^2$ respectively with trivial bilinear forms, are exterior algebras.

\section{Kingdon algebras}\label{S:const}

Throughout this section, we will take $F$ as a field in which $2$ is invertible.

\subsection{Kingdon algebras of formed spaces}

Consider a formed space $(V,B)$ over $F$, as in Definition~\ref{D:FormedSp}. For the following, interpret $\mathrm{Alt}[V]$ as the free alternative $F$-algebra over $V$, recalling that it includes the field $F$ and the vector space $V$ as the respective subspaces $\mathrm{Alt}[V]_0$ and $\mathrm{Alt}[V]_1$ of Definition~\ref{D:faacomponents}.

\begin{definition}\label{D:Altcliffdef}
Let $(V,B)$ be a formed space.
\begin{enumerate}
\item[$(\mathrm a)$]
Consider the subset
$$
\set{uv+vu-B(u,v),(uv)w-w(vu)|u,v,w\in V}
$$
of the free alternative algebra $\mathrm{Alt}[V]$ over $V$. Define $I_{B}$ as the ideal
\begin{equation}\label{E:altcliffdef}
\braket{uv+vu-B(u,v),(uv)w-w(vu)|u,v,w\in V}
\end{equation}
of $\mathrm{Alt}[V]$ generated by the subset.
\item[$(\mathrm b)$]
The quotient $\mathrm{Alt}[V]/I_{B}$ is the \emph{Kingdon algebra} $K(V,B)$ over $(V,B)$.
\item[$(\mathrm c)$]
The linear map
\begin{equation}\label{E:InsV2KBT}
\iota\colon V\to K(V,B);v\mapsto I_{B}+v
\end{equation}
is called the \emph{insertion}.
\end{enumerate}
\end{definition}

\begin{proposition}\label{P:KingZ2gr}
The Kingdon algebra $K(V,B)$ is graded by $\mathbb Z/_2$.
\end{proposition}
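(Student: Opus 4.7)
The plan is to inherit the grading from the ambient free alternative algebra $\mathrm{Alt}[V]$, which carries the $\mathbb{N}$-grading by degree of Definition~\ref{D:faacomponents}. Reducing this $\mathbb{N}$-grading modulo $2$ gives a $\mathbb{Z}/_2$-grading
\[
\mathrm{Alt}[V]=\mathrm{Alt}[V]^{(0)}\oplus\mathrm{Alt}[V]^{(1)},
\qquad
\mathrm{Alt}[V]^{(\epsilon)}=\bigoplus_{n\equiv\epsilon\,(\mathrm{mod}\,2)}\mathrm{Alt}[V]_n,
\]
which is a genuine superalgebra structure because the magma product preserves degree. The strategy is to show that the defining ideal $I_B$ is generated by elements that are homogeneous in this $\mathbb{Z}/_2$-grading, whence $I_B$ itself decomposes as $I_B=(I_B\cap\mathrm{Alt}[V]^{(0)})\oplus(I_B\cap\mathrm{Alt}[V]^{(1)})$, and the quotient $K(V,B)$ inherits the grading via $K(V,B)^{(\epsilon)}=(\mathrm{Alt}[V]^{(\epsilon)}+I_B)/I_B$.

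The first step is to inspect the two families of generators of $I_B$ given in \eqref{E:altcliffdef}. For $u,v\in V$, the Clifford relator $uv+vu-B(u,v)\cdot 1$ is the sum of an element of $\mathrm{Alt}[V]_2$ (namely $uv+vu$) and an element of $\mathrm{Alt}[V]_0=F$ (namely $-B(u,v)\cdot 1$), so it lies entirely within $\mathrm{Alt}[V]^{(0)}$. For $u,v,w\in V$, the palindromic relator $(uv)w-w(vu)$ lies in $\mathrm{Alt}[V]_3\subseteq\mathrm{Alt}[V]^{(1)}$. Thus every listed generator is $\mathbb{Z}/_2$-homogeneous, even though the Clifford relators are not $\mathbb{N}$-homogeneous.

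Next, I invoke the general fact that a two-sided ideal of a graded (not necessarily associative) algebra generated by homogeneous elements is itself graded: if $g$ is homogeneous of degree $\delta\in\mathbb Z/_2$ and $a\in\mathrm{Alt}[V]^{(\alpha)}$, $b\in\mathrm{Alt}[V]^{(\beta)}$ are homogeneous, then every iterated product built from $g$ with multiplications and additions on the left and right by $a$ and $b$ lies in $\mathrm{Alt}[V]^{(\alpha+\delta+\beta)}$; taking the $F$-span of all such products and summing over generators yields a decomposition of $I_B$ into its intersections with $\mathrm{Alt}[V]^{(0)}$ and $\mathrm{Alt}[V]^{(1)}$. (The argument is insensitive to nonassociativity because the grading respects the magma multiplication.) Applying this to the two $\mathbb Z/_2$-homogeneous generating families identified in the previous step shows that $I_B$ is a graded ideal of the superalgebra $\mathrm{Alt}[V]$.

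The main (mild) obstacle is precisely the observation that, unlike in the usual Clifford construction, here we must track \emph{two} kinds of relators simultaneously and check that \emph{both} happen to be $\mathbb{Z}/_2$-homogeneous --- the Clifford relators because they mix degrees $0$ and $2$ (both even), and the palindromic relators because they are purely of degree $3$ (odd). Once graded-ness of $I_B$ is in hand, the quotient $K(V,B)=\mathrm{Alt}[V]/I_B$ decomposes as $K(V,B)^{(0)}\oplus K(V,B)^{(1)}$ with multiplication respecting this decomposition, establishing the proposition. In particular, the image of $V$ under $\iota$ sits in $K(V,B)^{(1)}$ and the image of $F\cdot 1$ sits in $K(V,B)^{(0)}$, in exact parallel with the Clifford case.
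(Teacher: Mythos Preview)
Your proof is correct and follows essentially the same approach as the paper's own proof: reduce the $\mathbb N$-grading on $\mathrm{Alt}[V]$ to a $\mathbb Z/_2$-grading, observe that both families of generators of $I_B$ are $\mathbb Z/_2$-homogeneous (the Clifford relators even, the palindromic relators odd), and pass to the quotient. The paper simply cites standard references for the inheritance of the grading by the quotient, whereas you have written out the argument in more detail.
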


\begin{proof}
The free alternative algebra $\mathrm{Alt}\,V$ is $\mathbb Z$-graded \cite[\S1.3]{SS82}, and therefore also $\mathbb Z/_2$-graded. Its ideal $I_B$ is $\mathbb Z/_2$-homogeneous. Thus, the quotient $\mathrm{Alt}\,V/I_B$ is $\mathbb Z/_2$-graded \cite[\S I.2]{Chevalley1}, \cite[pp.9--10]{Chevalley2}.
\end{proof}

The following result exhibits the universality property satisfied by the Kingdon algebra $K(V,B)$ over a formed space $(V,B)$.

\begin{proposition}\label{P:UnvPrKVB}
Suppose that $A$ is an alternative algebra. Suppose that $\theta\colon V\to A;v\mapsto v'$ is a linear map such that
\begin{align}\label{E:thetaVAB}
u'v'+v'u'&=B(u,v)\cdot 1\quad \mbox{ and }
\\ \label{E:thetaVAT}
(u'v')w'&=w'(v'u')
\end{align}
for all elements $u,v,w$ of $V$. Then there exists a unique $F$-algebra homomorphism $\Theta\colon K(V,B)\to A$ such that the diagram
$$
\xymatrix{
K(V,B)
\ar@{-->}[dr]^{\Theta}
\\
V
\ar[u]^{\iota}
\ar[r]_{\theta}
&
A
}
$$
commutes.
\end{proposition}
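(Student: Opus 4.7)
The plan is to reduce the statement to the universality of the free alternative algebra $\mathrm{Alt}[V]$ (Corollary~\ref{L:UPFA}), and then factor through the quotient defining $K(V,B)$.

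\smallskip

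First, I would apply Corollary~\ref{L:UPFA} directly to the linear map $\theta\colon V\to A$, noting that $A$ is alternative by hypothesis. This produces a unique $F$-algebra homomorphism $\widetilde{\Theta}\colon\mathrm{Alt}[V]\to A$ whose restriction to $V$ (identified with its image under $\eta$) equals $\theta$. Composing with the canonical projection $\pi\colon\mathrm{Alt}[V]\to\mathrm{Alt}[V]/I_{B}=K(V,B)$ is not yet allowed in the other direction; we need to go the other way: pass $\widetilde{\Theta}$ through the quotient.

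\smallskip

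The next step is to verify that $I_{B}\subseteq\ker\widetilde{\Theta}$. By the generation hypothesis on $I_{B}$, it suffices to check that $\widetilde{\Theta}$ annihilates each generator, because the image of an ideal under an algebra homomorphism is contained in the ideal generated by the image of a generating set, and the ideal generated by $\{0\}$ in $A$ is $\{0\}$. For the Clifford relators, since $\widetilde{\Theta}|_V=\theta$ and $\widetilde{\Theta}(1)=1$, we compute
\[
\widetilde{\Theta}(uv+vu-B(u,v)\cdot 1)=u'v'+v'u'-B(u,v)\cdot 1=0
\]
by \eqref{E:thetaVAB}. For the palindromic relators,
\[
\widetilde{\Theta}\bigl((uv)w-w(vu)\bigr)=(u'v')w'-w'(v'u')=0
\]
by \eqref{E:thetaVAT}. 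Hence $I_{B}\subseteq\ker\widetilde{\Theta}$, and $\widetilde{\Theta}$ descends to a well-defined $F$-algebra homomorphism $\Theta\colon K(V,B)\to A$ satisfying $\Theta\circ\iota=\theta$, where $\iota$ is the insertion of \eqref{E:InsV2KBT}.

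\smallskip

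For uniqueness, observe that $\iota(V)$ together with the unit generates $K(V,B)$ as an algebra, since $V$ generates $\mathrm{Alt}[V]$ and the quotient is surjective. Any algebra homomorphism $K(V,B)\to A$ whose restriction along $\iota$ equals $\theta$ is therefore determined on a generating set, and hence unique. The only mild technicality to watch is the step that $\widetilde{\Theta}$ sends the entire ideal $I_{B}$, not merely the listed generators, to zero; but this is immediate once one notes that $\widetilde{\Theta}$ is an algebra homomorphism, so the preimage $\ker\widetilde{\Theta}$ is itself an ideal of $\mathrm{Alt}[V]$ containing the chosen generating set of $I_{B}$, and thus contains $I_{B}$.
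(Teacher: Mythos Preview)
Your proof is correct and follows the standard ``factor through the quotient'' pattern: invoke Corollary~\ref{L:UPFA} to obtain $\widetilde{\Theta}\colon\mathrm{Alt}[V]\to A$, check that the generators of $I_B$ are killed using \eqref{E:thetaVAB} and \eqref{E:thetaVAT}, conclude $I_B\subseteq\ker\widetilde{\Theta}$, and descend. Uniqueness follows from the fact that $\iota(V)$ generates $K(V,B)$.

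The paper takes a different, more compressed route: it works inside the alternative algebra $K(V,B)\oplus A$ and declares that the subalgebra generated by the diagonal set $\{(I_B+v,\,v')\mid v\in V\}$ is the graph of the desired $\Theta$. This ``graph subalgebra'' trick packages existence and the homomorphism property into a single stroke, but it leaves implicit exactly the step you make explicit, namely that the first projection from this subalgebra to $K(V,B)$ is injective (equivalently, that the relations defining $K(V,B)$ hold for the images $v'$ in $A$). Your argument spells this out via $I_B\subseteq\ker\widetilde{\Theta}$ and is therefore more transparent, at the cost of being longer; the paper's version is slicker but assumes the reader recognizes why the generated subalgebra is actually a graph.
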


\begin{proof}
Consider the subspace $\set{(I_{B}+v,v')|v\in V}$ of $K(V,B)\oplus A$. Then the subalgebra of $K(V,B)\oplus A$ that it generates is the graph of the $F$-homomorphism $\Theta$.
\end{proof}

\begin{corollary}\label{C:UnvPrKVB}
Consider  the negation map 
\begin{equation}\label{E:NegatonV}
\nu\colon
V^\iota\to V^\iota;
I_{B}+v\mapsto I_{B}-v
\end{equation}
on the image of the insertion \eqref{E:InsV2KBT}.
\begin{enumerate}
\item[$(\mathrm a)$]
The Kingdon algebra $K(V,B)$ of $(V,B)$ has a uniquely defined involutory automorphism $\nu$ extending \eqref{E:NegatonV}.
\item[$(\mathrm b)$]
The Kingdon algebra $K(V,B)$ of $(V,B)$ has a uniquely defined involutory anti-automorphism $\kappa$ extending \eqref{E:NegatonV}.
\end{enumerate}
\end{corollary}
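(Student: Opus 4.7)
The plan is to invoke the universality property of Proposition~\ref{P:UnvPrKVB} twice, with carefully chosen target algebras, so that both $\nu$ and $\kappa$ arise as unique extensions of the negation on $V^\iota$.

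For part (a), I would take the target algebra to be $K(V,B)$ itself and define the linear map $\theta\colon V \to K(V,B)$ by $v \mapsto -\iota(v)$. The two minus signs in a pairwise product cancel, so
$$\theta(u)\theta(v) + \theta(v)\theta(u) = \iota(u)\iota(v) + \iota(v)\iota(u) = B(u,v)\cdot 1,$$
verifying \eqref{E:thetaVAB}. For triples the three signs combine to an overall minus, and \eqref{E:thetaVAT} reduces to the palindromic relation $(\iota(u)\iota(v))\iota(w) = \iota(w)(\iota(v)\iota(u))$ already imposed in $K(V,B)$. Proposition~\ref{P:UnvPrKVB} then supplies a unique algebra homomorphism $\nu\colon K(V,B) \to K(V,B)$ extending $\theta$. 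The composite $\nu\circ\nu$ is an algebra endomorphism of $K(V,B)$ restricting to the identity on $V^\iota$, so applying the uniqueness clause with $\iota$ in place of $\theta$ forces $\nu\circ\nu = \mathrm{id}$, and $\nu$ is an involutory automorphism.

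For part (b), I would run the analogous argument against the opposite algebra $K(V,B)^{\mathrm{op}}$. Passage to the opposite algebra exchanges the left and right alternative identities of Lemma~\ref{L:alternative}(b), so $K(V,B)^{\mathrm{op}}$ is again alternative and qualifies as a target for Proposition~\ref{P:UnvPrKVB}. Define $\theta(v) = -\iota(v)$ once more, and write $a\cdot b = ba$ for the product in $K(V,B)^{\mathrm{op}}$. Then $\theta(u)\cdot\theta(v) = \iota(v)\iota(u)$, so \eqref{E:thetaVAB} in $K(V,B)^{\mathrm{op}}$ becomes $\iota(v)\iota(u) + \iota(u)\iota(v) = B(u,v)\cdot 1$, which holds because $B$ is symmetric; a parallel short computation shows that \eqref{E:thetaVAT} in $K(V,B)^{\mathrm{op}}$ again reduces to the palindromic relation of $K(V,B)$. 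Proposition~\ref{P:UnvPrKVB} then produces a unique algebra homomorphism $K(V,B) \to K(V,B)^{\mathrm{op}}$, equivalently a unique anti-homomorphism $\kappa\colon K(V,B) \to K(V,B)$ extending the negation on $V^\iota$. The composite $\kappa\circ\kappa$ is a genuine algebra homomorphism, restricts to the identity on $V^\iota$, and therefore equals $\mathrm{id}$ by the uniqueness clause, making $\kappa$ an involutory anti-automorphism.

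The main obstacle is the sign and order bookkeeping in part (b): one must verify that the simultaneous effects of reversing the multiplication in $K(V,B)^{\mathrm{op}}$ and negating the generators conspire to return both defining relators of $K(V,B)$ to exactly the form already imposed, rather than to some twisted variant. Once this verification is in place, everything else is forced by the universality property, and the involutivity of both $\nu$ and $\kappa$ comes for free from the uniqueness of the extension.
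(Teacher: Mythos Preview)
Your proposal is correct and follows essentially the same route as the paper: invoke Proposition~\ref{P:UnvPrKVB} with target $K(V,B)$ for part~(a) and with target $K(V,B)^{\mathrm{op}}$ for part~(b), then deduce involutivity from the uniqueness clause applied to the composite. Your explicit remark that the opposite of an alternative algebra is alternative is a useful addition that the paper leaves implicit.
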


\begin{proof}
In each case, we take a linear map
\begin{equation}\label{E:thetViKo}
\theta\colon V^\iota\to K(V,B);I_{B}+v\mapsto I_{B}-v
\end{equation}
which corestricts to $\nu$, and invoke the universality property to obtain a linear extension $\Theta\colon K(V,B)\to K(V,B)$. To see that $\Theta^2$ is the identity automorphism on $K(V,B)$, note that \eqref{E:NegatonV} is involutory. Therefore, the extension $\Theta^2$ of its square
is the identity map on $K(V,B)$. It then follows that $\Theta$ is invertible. In the remainder of the proof, we just use a representative vector $x$ from $V$ as an abbreviation for its coset $I_B+x$ in the image of the insertion $\iota\colon V\to K(V,B)$.
\vskip 2mm
\noindent
(a)
Take $K(V,B)$ as the target alternative algebra $A$ in Proposition~\ref{P:UnvPrKVB}. To verify \eqref{E:thetaVAB}, note
\begin{align*}
(-u)(-v)+(-v)(-u)
=uv+vu
=B(u,v)
\end{align*}
for all $u,v$ in $V$. To verify \eqref{E:thetaVAT}, note
\begin{align*}
\Big\{&\big[(-u)(-v)\big](-w)\Big\}-\Big\{(-w)\big[(-v)(-u)\big]\Big\}
=w(vu)-(uv)w
=0
\end{align*}
for all $u,v,w$ in $V$. The homomorphism $\nu\colon K(V,B)\to K(V,B)$ is the lifting $\Theta$ of \eqref{E:thetViKo} furnished by the universality property.
\vskip 2mm
\noindent
(b)
Take the opposite $K(V,B)^\mathsf{op}$ of the Kingdon algebra $K(V,B)$ as the target alternative algebra $A$ in Proposition~\ref{P:UnvPrKVB}. Interpret \eqref{E:thetViKo} as a linear map
$
\theta\colon V^\iota\to K(V,B)^\mathsf{op}
$.
Write $\circ$ for the product in $K(V,B)^\mathsf{op}$. To verify \eqref{E:thetaVAB} recalling the symmetry of $B$, note the computation
\begin{align*}
(-u)\circ(-v)+(-v)\circ(-u)
=vu+uv
=B(v,u)=B(u,v)
\end{align*}
for all $u,v$ in $V$. To verify \eqref{E:thetaVAT}, note
\begin{align*}
\Big\{&\big[(-u)\circ(-v)\big]\circ(-w)\Big\}-\Big\{(-w)\circ\big[(-v)\circ(-u)\big]\Big\}
\\
&
=\big[(vu)\circ(-w)\big]-\big[(-w)\circ(uv)\big]
=(uv)w-w(vu)
=0
\end{align*}
for all $u,v,w$ in $V$. The homomorphism $\kappa\colon K(V,B)\to K(V,B)^\mathsf{op}$ is the lifting $\Theta$ of \eqref{E:thetViKo} furnished by the universality property. 
\end{proof}

\begin{definition}
Consider a Kingdon algebra $K(V,B)$ over a formed space $(V,B)$.
\begin{enumerate}
\item[$(\mathrm a)$]
The involutary automorphism $\nu\colon K(V,B)\to K(V,B)$ specified in Corollary~\ref{C:UnvPrKVB}(a) is called the \emph{main automorphism} of $K(V,B)$.
\item[$(\mathrm b)$]
The involutary anti-automorphism $\kappa\colon K(V,B)\to K(V,B)$ of Corollary~\ref{C:UnvPrKVB}(b) is called the \emph{Clifford conjugation} of $K(V,B)$.
\end{enumerate}
\end{definition}

\subsection{Bases of the formed space}

For an $n$-dimensional formed space $(V,B)$, it is often convenient to consider the Kingdon algebra $K(V,B)$ in terms of a basis $\set{e_i|1\le i\le n}$ of $V$. We present two lemmas that are relevant to this approach.

\begin{lemma}\label{L:BsGenKVB}
The Kingdon algebra $K(V,B)$ is generated, as an algebra over the field $F$, by the basis $\set{e_i|1\le i\le n}$.
\end{lemma}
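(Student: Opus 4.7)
The plan is to push the generation statement down through the two successive constructions that define $K(V,B)$: first the free alternative algebra $\mathrm{Alt}[V]$, then its quotient by $I_B$.

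First I would unpack the free-alternative-algebra convention laid out above Corollary~\ref{L:UPFA}: for the basis $X=\set{e_i\mid 1\le i\le n}$ of $V$, the algebra $\mathrm{Alt}[V]$ is by definition $\mathrm{Alt}(X)=FX^\dag/\mathcal I(FX^\dag)$. The magma algebra $FX^\dag$ is spanned over $F$ by elements of the free unital magma $X^\dag$, which are exactly the bracketed products of elements of $X$. Thus $FX^\dag$ is generated as an $F$-algebra by $X$, and passing to the quotient $\mathrm{Alt}(X)$ shows that the images $\eta_X(e_i)$ generate $\mathrm{Alt}[V]$ as an $F$-algebra. Under the identification of $V$ with its image in $\mathrm{Alt}[V]$ (justified by Theorem~\ref{T:FreeAltX}(a) and the $F$-linear extension of $\eta_X$), the basis $\set{e_i}$ generates the whole of $\mathrm{Alt}[V]$.

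Next I would note that the canonical projection $\pi\colon\mathrm{Alt}[V]\to K(V,B)=\mathrm{Alt}[V]/I_B$ is a surjective $F$-algebra homomorphism, and that the image of a generating set under a surjective algebra homomorphism is again a generating set of the codomain. Applying this to the generating set $\set{e_i}$ of $\mathrm{Alt}[V]$ and post-composing with the insertion $\iota$ from \eqref{E:InsV2KBT} (so that $\iota(e_i)=I_B+e_i=\pi(e_i)$) yields that $\set{\iota(e_i)\mid 1\le i\le n}$ generates $K(V,B)$ as an $F$-algebra, which is the required statement once $V$ is identified with its image $V^\iota$ in $K(V,B)$.

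There is no substantive obstacle here; the only point that deserves explicit mention is the use of the identification $V\hookrightarrow\mathrm{Alt}[V]$, without which the statement that ``the basis of $V$ generates $\mathrm{Alt}[V]$'' would not be literally meaningful. Once that identification is in force, the lemma is immediate from the construction of free alternative algebras and the fact that quotients of algebras are generated by the images of the generators.
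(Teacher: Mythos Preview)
Your proposal is correct and follows essentially the same approach as the paper: the paper's proof simply observes that, since $K(V,B)=\mathrm{Alt}[V]/I_B$ by Definition~\ref{D:Altcliffdef}(b), it suffices to note that $\mathrm{Alt}[V]$ is generated by $\set{e_i\mid 1\le i\le n}$. Your version spells out the magma-algebra construction and the quotient step more explicitly, but the underlying argument is identical.
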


\begin{proof}
Bearing Definition~\ref{D:Altcliffdef}(b) in mind, it suffices to note that the $F$-algebra $\mathrm{Alt}[V]$ is generated by $\set{e_i|1\le i\le n}$.
\end{proof}

The ideal $I_{B}$ of \eqref{E:altcliffdef} may be expressed nicely in terms of the basis of the formed space $(V,B)$.

\begin{lemma}\label{L:standardgenerators}
The ideal
\begin{equation}\label{E:BasIdlJB}
J_{B}=\braket{e_ie_j+e_je_i-B(e_i,e_j),(e_ie_j)e_k-e_k(e_je_i)|1\le i,j,k\le n}
\end{equation} 
of $\mathrm{Alt}[V]$ is equal to the ideal $I_{B}$ of \eqref{E:altcliffdef}.
\end{lemma}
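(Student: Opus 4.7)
The plan is to establish the two inclusions $J_B\subseteq I_B$ and $I_B\subseteq J_B$ separately, with the first being a triviality and the second reducing to a multilinearity argument.

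First, the inclusion $J_B\subseteq I_B$ is immediate: each of the listed generators of $J_B$ is obtained by specializing the vectors $u,v,w$ in the generators of $I_B$ to basis vectors $e_i,e_j,e_k$. Since $J_B$ is the ideal generated by these specializations and $I_B$ already contains them, we get $J_B\subseteq I_B$.

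For the reverse inclusion $I_B\subseteq J_B$, it suffices to show that each generator of $I_B$ listed in \eqref{E:altcliffdef} lies in $J_B$. Given arbitrary $u,v,w\in V$, expand them with respect to the basis as $u=\sum_i a_ie_i$, $v=\sum_j b_je_j$, and $w=\sum_k c_ke_k$. Using bilinearity of the product in the free alternative algebra $\mathrm{Alt}[V]$ together with bilinearity of $B$, compute
\begin{align*}
uv+vu-B(u,v)\cdot 1
&=\sum_{i,j}a_ib_j\bigl(e_ie_j+e_je_i\bigr)-\sum_{i,j}a_ib_jB(e_i,e_j)\cdot 1\\
&=\sum_{i,j}a_ib_j\bigl[e_ie_j+e_je_i-B(e_i,e_j)\bigr],
\end{align*}
which is an $F$-linear combination of the generators of $J_B$ and therefore lies in $J_B$. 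Similarly, trilinearity of the (non-associative) triple product in $\mathrm{Alt}[V]$ yields
\begin{align*}
(uv)w-w(vu)
&=\sum_{i,j,k}a_ib_jc_k\bigl[(e_ie_j)e_k-e_k(e_je_i)\bigr],
\end{align*}
again an $F$-linear combination of generators of $J_B$.

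Since every generator of $I_B$ is an element of $J_B$, we conclude $I_B\subseteq J_B$, and combined with the opposite inclusion this gives $I_B=J_B$. There is no genuine obstacle here; the only point to watch is the bookkeeping that guarantees the coefficient of $B(e_i,e_j)$ in the expansion of $B(u,v)\cdot 1$ matches exactly the coefficient of $e_ie_j+e_je_i$, which is ensured by bilinearity of $B$ and of the product on the generating vectors.
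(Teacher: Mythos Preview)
Your proof is correct and follows essentially the same approach as the paper: the trivial inclusion $J_B\subseteq I_B$, followed by expanding $u,v,w$ in the basis and using bilinearity of the product and of $B$ to write each generator of $I_B$ as an $F$-linear combination of generators of $J_B$. The computations you display match the paper's almost line for line.
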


\begin{proof}
Certainly $J_{B}\subseteq I_{B,}$. We prove the reverse inclusion. Consider linear combinations 
$$
u=\sum_{i=1}^n a_ie_i\,,
\quad
v=\sum_{i=1}^n b_ie_i\,,
\quad
w=\sum_{i=1}^n c_ie_i
$$
in $V$. Then $uv+vu-B(u,v)=$
\begin{align*}
\Big(\sum_{i=1}^n a_ie_i\Big)&\Big(\sum_{j=1}^n b_je_j\Big)
+\Big(\sum_{j=1}^n b_je_j\Big)\Big(\sum_{i=1}^n a_ie_i\Big)
-B\Big(\sum_{i=1}^n a_ie_i,\sum_{j=1}^n b_je_j\Big)
\\
&
=
\sum_{1\le i, j\le n}^na_ib_j\big[(e_ie_j+e_je_i)-B(e_i,e_j)\big]\in J_B
\end{align*}
and $(uv)w-w(vu)=$
\begin{align*}
\bigg[\Big(\sum_{i=1}^n a_i&e_i\Big)\Big(\sum_{j=1}^n b_je_j\Big)\bigg]
\Big(\sum_{k=1}^n c_ke_k\Big)
\\
-&
\Big(\sum_{k=1}^n c_ke_k\Big)
\bigg[\Big(\sum_{=1}^n b_je_j\Big)\Big(\sum_{i=1}^n a_ie_i\Big)\bigg]
\\
=&\sum_{1\le i,j,k\le n}a_ib_jc_k \big[(e_ie_j)e_k-e_k(e_je_i)\big]\in J_B
\end{align*}
as required.
\end{proof}

\subsection{The reduction process}

Starting from the generating triple \eqref{E:GenACijk} for the (split) octonions, we have
$
\set{1,i,j,k,ij,jk,ik,(ij)k}
$
as a basis. Within the Kingdon algebra $K(V,B)$ of a formed space $(V,B)$ over a field where $2$ is invertible, we demonstrate how a product of three (images under $\iota$ of) vector elements $a,b,c$ may be reduced to a linear combination of $(ab)c$ and products of fewer vector elements. As before, a vector $a$ in $V$ is used to represent the coset $a+I_B$ in $K(V,B)$.

\begin{theorem}\label{T:reductionstep}
For (images under $\iota$ of) vectors $a,b,c$ from $V$, the identity
\begin{equation}\label{E:res}
(ab)c+a(bc)=aB(b,c)-bB(c,a)+cB(a,b)
\end{equation}
holds in $K(V,B)$.
\end{theorem}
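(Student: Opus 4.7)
The plan is to derive the identity by combining polarizations of the two alternative laws with the palindromic and Clifford relators. The core observation is that alternativity alone produces two symmetric companion identities to $(ab)c + a(bc)$, and the palindromic relator is precisely what collapses the leftover terms into a multiple of $B$.

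First, I would polarize the right alternative identity $(yx)x = y(xx)$ by replacing $x$ with $x+z$; since $(x+z)^2 - x^2 - z^2 = xz + zx = B(x,z)\cdot 1$ by the Clifford relator, this yields the linearized form $(yx)z + (yz)x = yB(x,z)$. Setting $y=a$, $x=b$, $z=c$ gives $(ab)c + (ac)b = aB(b,c)$. An analogous polarization of the left alternative identity $x(xy) = (xx)y$, followed by the substitution $x=b$, $z=a$, $y=c$, yields $a(bc) + b(ac) = B(a,b)c$. Adding these two identities, one obtains
\begin{equation*}
(ab)c + a(bc) \;=\; aB(b,c) + cB(a,b) - \bigl[(ac)b + b(ac)\bigr],
\end{equation*}
so the problem reduces to showing that $(ac)b + b(ac) = bB(a,c)$.

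For this last step the palindromic relator $(xy)z = z(yx)$ of Definition~\ref{D:Altcliffdef}(a) enters with $x=c$, $y=a$, $z=b$, giving $b(ac) = (ca)b$. Then $(ac)b + b(ac) = (ac + ca)b = B(a,c)b$ by the Clifford relator, and this equals $bB(a,c)$ since scalars lie in the center of any unital algebra. Substituting back and using symmetry $B(a,c) = B(c,a)$ produces the claimed identity. The pivotal idea, rather than a serious technical obstacle, is recognizing that the palindromic relator is precisely what rewrites $b(ac)$ so that the Clifford relator applies; without it, alternativity on its own controls only associators with a repeated argument, and the mixed sum $(ac)b + b(ac)$ would not reduce. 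Everything else is routine polarization and scalar bookkeeping.
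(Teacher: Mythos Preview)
Your proposal is correct and follows essentially the same approach as the paper: both arguments polarize the alternative laws (the paper phrases this as $(c,a,b)+(c,b,a)=0$, which unwinds to the same identity $(ca)b+(cb)a=cB(a,b)$ you obtain), invoke the Clifford relator to convert $xz+zx$ into $B(x,z)$, and use the palindromic relator once to collapse the remaining cross term. The only difference is organizational---you symmetrically polarize both the left and right alternative laws and add before applying the palindrome, whereas the paper polarizes once, swaps variables, and applies the palindrome earlier---but the underlying mechanism is identical.
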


\begin{proof}
Starting from alternativity and $ab+ba=B(a,b)$, we have
\begin{align*}
0=(c,a,b)+(c,b,a)
&=(ca)b-c(ab)+(cb)a-c(ba)
\\
&
=(ca)b-c(ab)+(cb)a+c(ab)-cB(a,b)\,,
\end{align*}
which cancels to
\begin{align}\label{E:7}
(ca)b+(cb)a-cB(a,b)&=0\,,\ \mbox{becoming}
\\ \label{E:8}
(ac)b+(ab)c-aB(c,b)&=0
\end{align}
after interchange of $a$ and $c$. Now $(cb)a=a(bc)$, and thus \eqref{E:7} gives
\begin{equation}\label{E:2}
0=(ca)b+a(bc)-cB(a,b)\,.
\end{equation}
Also, recalling $ca+ac=B(c,a)$, note that \eqref{E:8} yields
\begin{equation}\label{E:4}
0=-(ca)b+bB(c,a)+(ab)c-aB(c,b)\,.
\end{equation}
Adding \eqref{E:4} to \eqref{E:2} yields the required equation \eqref{E:res}.
\end{proof}

\subsection{Low-dimensional examples}\label{SS:LowDimEx}

Consider a formed space $(V,B)$.

\begin{proposition}\label{P:4cliff}
If $\dim(V)<3$, then $K(V,B)$ is the (associative) Clifford algebra $\mathrm{Cl}(V,B)$.
\end{proposition}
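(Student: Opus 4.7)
The plan is to exhibit mutually inverse algebra homomorphisms between $K(V,B)$ and $\mathrm{Cl}(V,B)$ by invoking the two universality properties.

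First, I would argue that $K(V,B)$ is already associative when $\dim V < 3$. By Lemma~\ref{L:BsGenKVB}, $K(V,B)$ is generated as an $F$-algebra by the $\iota$-images of a basis of $V$, which contains at most two vectors. Since $K(V,B)$ is alternative by construction, Artin's Theorem~\ref{L:2assoc} makes the subalgebra generated by any two elements associative, and here that subalgebra is all of $K(V,B)$. The insertion $\iota\colon V \to K(V,B)$ is therefore a linear map into an associative algebra satisfying the Clifford relation \eqref{E:MatchCon}, and the universality property of $\mathrm{Cl}(V,B)$ delivers a unique algebra homomorphism $\Theta\colon \mathrm{Cl}(V,B) \to K(V,B)$ commuting with the insertions.

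For the reverse direction, I would verify that $\mathrm{Cl}(V,B)$ itself satisfies the palindromic relation $(xy)z = z(yx)$ for all $x, y, z \in V$ when $\dim V < 3$. Both sides are trilinear in the arguments, so it suffices to test triples of basis vectors, of which there are at most two available. By pigeonhole, at least two of $x, y, z$ coincide, reducing to three cases: if $x = y$ then $xy = Q(x) \in F$, so both $(xy)z$ and $z(yx)$ equal $Q(x)\, z$; if $y = z$, associativity of $\mathrm{Cl}(V,B)$ gives $(xy)y = x(yy) = Q(y)\, x = (yy)x = y(yx)$; and if $x = z$, associativity yields $(xy)x = x(yx)$ directly. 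In every case the palindromic relator vanishes, so Proposition~\ref{P:UnvPrKVB} applies to the Clifford insertion $V \to \mathrm{Cl}(V,B)$ and produces a unique homomorphism $\Psi\colon K(V,B) \to \mathrm{Cl}(V,B)$.

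The two composites $\Psi \circ \Theta$ and $\Theta \circ \Psi$ act as the identity on the (images of the) generating space $V$, so by the uniqueness clauses in the universality properties they each equal the identity map on their respective domains. Hence $\Theta$ and $\Psi$ are mutually inverse isomorphisms, identifying $K(V,B)$ with $\mathrm{Cl}(V,B)$. The only genuine obstacle is the step establishing associativity of $K(V,B)$ in low dimension; once that is noted, the remaining verification of palindromicity in $\mathrm{Cl}(V,B)$ collapses to a short pigeonhole check on basis triples.
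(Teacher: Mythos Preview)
Your argument is correct. Both you and the paper rely on the same two ingredients---Artin's Theorem and a pigeonhole check that the palindromic relators become redundant when the basis has at most two elements---but you organize them differently. The paper applies Artin's Theorem one level up, to the free alternative algebra $\mathrm{Alt}[V]$ itself, concluding that $\mathrm{Alt}[V]$ is associative and hence literally equals $TV$; the palindromic generators of $J_B$ are then seen to lie in the Clifford ideal $J$, so $K(V,B)=\mathrm{Alt}[V]/J_B=TV/J=\mathrm{Cl}(V,B)$ as the \emph{same} quotient. You instead apply Artin to the quotient $K(V,B)$, and then trade the concrete identification for a pair of universality arguments producing mutually inverse homomorphisms. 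Your route is a little longer but has the virtue of being entirely self-contained at the level of the universal properties stated in the paper, without needing to invoke that $\mathrm{Alt}[V]\cong TV$ as algebras; the paper's route is quicker but leans on that identification.
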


\begin{proof}
If $V$ has a basis with less than $3$ elements, Artin's Theorem shows that the alternative algebra $\mathrm{Alt}[V]$ is actually associative, and thereby coincides with the tensor algebra $TV$ over $V$. Thus, the terms $(e_ie_j)e_k-e_k(e_je_i)$ of \eqref{E:BasIdlJB} vanish, and the specification Definition~\ref{D:Altcliffdef}(b) of the Kingdon algebra $K(V,B)$ reduces to the specification of the Clifford algebra $\mathrm{Cl}(V,B)$ as the quotient of the tensor algebra $TV$ by the ideal \eqref{E:ClTAlgId}.
\end{proof}

\subsection{(Split) octonions}\label{SS:soctnons}

We will now show how the (split) octonions emerge as Kingdon algebras. Take a $3$-dimensional vector space $V$ spanned by $\set{i,j,k}$ over a field $F$ in which $2$ is invertible. Suppose that the set $\set{i,j,k}$ is orthogonal with respect to a symmetric bilinear form $B$ on $V$.  Consider the insertion $\iota\colon V\to K(V,B);a\mapsto a+I_B$, again using $a$ as a representative for $a+I_B$.

\begin{lemma}\label{L:soctnons}
The equations
\begin{align}\label{E:FormsOfO}
&
(ij)k=(jk)i=(ki)j=-i(jk)=-j(ki)=-k(ij)
\\ \notag
=\,
&
i(kj)=j(ik)=k(ji)=-(ik)j=-(ji)k=-(kj)i
\end{align}
hold in $K(V,B)$.
\end{lemma}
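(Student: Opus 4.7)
The plan is to reduce all twelve expressions in \eqref{E:FormsOfO} to a single canonical form, say $(ij)k$, using three ingredients that are all available at this point of the paper: the Clifford relators specialized to the $B$-orthogonal triple $\set{i,j,k}$, the palindromic relator $(xy)z=z(yx)$, and the reduction identity of Theorem~\ref{T:reductionstep}.

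First, orthogonality collapses the Clifford relators to the pairwise anticommutativities $ij=-ji$, $jk=-kj$, $ki=-ik$. The palindromic relator then yields directly the three equalities $(ij)k=k(ji)$, $(jk)i=i(kj)$, $(ki)j=j(ik)$; combined with pairwise anticommutativity these rewrite as $(ij)k=-k(ij)$, $(jk)i=-i(jk)$, $(ki)j=-j(ki)$. These two forms of each identity already account for all six right-associated expressions in \eqref{E:FormsOfO} (either directly or after an application of $ij=-ji$, $jk=-kj$, $ki=-ik$), together with the three negated left-associated expressions $-(ik)j$, $-(ji)k$, $-(kj)i$ in the last row.

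The substantive content is the cyclic chain $(ij)k=(jk)i=(ki)j$, which I would extract from the reduction identity. Applying Theorem~\ref{T:reductionstep} to the triple $(i,j,k)$, orthogonality makes the entire right-hand side vanish, giving $(ij)k=-i(jk)$; cycling the triple to $(j,k,i)$ and $(k,i,j)$ produces $(jk)i=-j(ki)$ and $(ki)j=-k(ij)$ analogously. Comparing each of these with the palindromic consequences of the previous paragraph, for instance noticing that both $(ij)k$ and $(jk)i$ equal $-i(jk)$ via the two different routes, pins down the cyclic equalities and completes the proof. The only real obstacle is careful sign bookkeeping across the three cyclically related triples; conceptually, Theorem~\ref{T:reductionstep} does all the non-trivial work and everything else is forced by orthogonality together with the palindromic relation.
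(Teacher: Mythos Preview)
Your proposal is correct and follows essentially the same approach as the paper: both arguments rely on exactly the same three ingredients (pairwise anticommutativity from the Clifford relators with orthogonality, the palindromic relator, and Theorem~\ref{T:reductionstep} with vanishing right-hand side), differing only in the order of deployment. The paper first uses Theorem~\ref{T:reductionstep} to group the twelve terms into three blocks of four and then invokes the palindromic relator to link the blocks, whereas you use the palindromic relator first to reduce everything to the three left-associated cycles $(ij)k$, $(jk)i$, $(ki)j$ and then apply Theorem~\ref{T:reductionstep} to close the cyclic chain; the content is the same.
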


\begin{proof}
Consider the cyclically ordered set $\set{i<j<k<i}$. The first relations of $J_B$ from \eqref{E:BasIdlJB} and the orthogonality yield the equation $-i(jk)=i(kj)$ and its cyclic permutations, such as 
\begin{equation}\label{E:-kij=kji}
-k(ij)=k(ji)\,.
\end{equation}
Together with the orthogonality, Theorem~\ref{T:reductionstep} yields $(ij)k=-i(jk)$ and its cyclic permutations, along with $i(kj)=-(ik)j$ and its cyclic permutations.
Thus, for each $1\le r\le 3$, the $r$-th, $(r+3)$-rd, $(r+6)$-th and $(r+9)$-th terms of \eqref{E:FormsOfO} agree.

Now, the second relations of $J_B$ from \eqref{E:BasIdlJB} yield $i(kj)=(jk)i$ and $(ij)k=k(ji)\overset{\eqref{E:-kij=kji}}=-k(ij)$, respectively implying the equality of both the $r=1$ and $r=2$ terms, and of the $r=1$ and $r=3$ terms, in the equations \eqref{E:FormsOfO}.
\end{proof}

\begin{definition}\label{D:FormsOfO}
Write $\omega$ (as a \emph{volume element}) for the common value of the terms appearing in \eqref{E:FormsOfO}.
\end{definition}

\begin{lemma}\label{L:SisClosd}
The $F$-span of the subset 
$$
S=\set{\pm1,\pm i,\pm j,\pm k,\pm ij,\pm jk,\pm ki,\pm\omega}
$$
of $K(V,B)$ is closed under multiplication.
\end{lemma}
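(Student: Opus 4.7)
The plan is to verify that every product $e_a e_b$ of basis elements from $\{1, i, j, k, ij, jk, ki, \omega\}$ lies back in the $F$-span of $S$; closure then follows by bilinearity. Throughout, set $\alpha = i^2, \beta = j^2, \gamma = k^2$, which are scalars in $F$ (from $2i^2 = B(i,i)$ and the other diagonal Clifford relators), and recall the orthogonality relations $ij = -ji$, $jk = -kj$, $ki = -ik$. The cyclic symmetry in $(i,j,k)$ substantially cuts down the case analysis.

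The routine cases come first. Products with $1$ are trivial. Products among $\{i, j, k\}$ follow from the Clifford relators. Products pairing a generator with an adjacent ``quaternionic'' element (such as $i\cdot ij$ or $(ij)\cdot j$) lie in a two-generated subalgebra like $\langle i, j\rangle$, which is associative by Artin's Theorem (Theorem \ref{L:2assoc}), and alternativity yields values such as $i(ij) = \alpha j$, $(ij)i = -\alpha j$, $(ij)j = \beta i$. Products pairing a generator with the non-adjacent ``quaternionic'' element ($i\cdot jk$, $(jk)\cdot i$, and so on) are precisely those handled by Lemma \ref{L:soctnons} and give $\pm\omega$. The squares $(ij)^2, (jk)^2, (ki)^2$ again lie in two-generated subalgebras and evaluate to scalars like $(ij)^2 = -\alpha\beta$.

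The products $\omega\cdot e$ and $e\cdot\omega$ for $e \in \{i, j, k\}$ admit an economical treatment: I would pick the expression of $\omega$ from Lemma \ref{L:soctnons} that ends (respectively, begins) with $e$, and apply right (respectively, left) alternativity. For instance, $\omega i = ((jk)i)i = (jk)i^2 = \alpha(jk)$, and $i\omega = i(-i(jk)) = -i^2(jk) = -\alpha(jk)$; cyclic symmetry gives the remaining four. The delicate cross products $(ij)(jk), (ij)(ki)$ and their cyclic analogues do not lie in any single two-generated subalgebra, so Artin's Theorem is out of reach. Here the middle Moufang identity $(xy)(zx) = x(yz)x$ of Lemma \ref{L:Moufang} is the central tool: choosing the shared generator as $x$ (e.g.\ $x = j, y = -i, z = -k$ for $(ij)(jk)$) rewrites the product as $j(ik)j$, which then lives inside the associative subalgebra $\langle j, ik\rangle$, and Lemma \ref{L:soctnons} collapses it to $\omega j = \beta(ki)$.

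Finally, the products $\omega\cdot(ij), (ij)\cdot\omega, \omega^2$ and their cyclic variants all fit inside a single two-generated subalgebra: $\omega = (ij)k$ places everything in $\langle ij, k\rangle$, where Artin's Theorem grants associativity and the anticommutation $(ij)k + k(ij) = \omega - \omega = 0$ reduces the arithmetic to a Clifford-like calculation, yielding $(ij)\omega = -\alpha\beta k$, $\omega(ij) = \alpha\beta k$, and $\omega^2 = \alpha\beta\gamma$. The main obstacle is the cross-product family $(ab)(bc)$ of the previous paragraph: because no two-generated subalgebra contains both factors, one cannot simply invoke diassociativity, and the Moufang identity of Lemma \ref{L:Moufang} is indispensable for restoring associativity. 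Tracking signs introduced by the orthogonality-induced anticommutations is the routine but essential bookkeeping that completes the argument.
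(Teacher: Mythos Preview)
Your proof is correct and follows essentially the same approach as the paper: both organize the case analysis by grouping products that live in a two-generated (hence associative by Artin's Theorem) subalgebra, invoke Lemma~\ref{L:soctnons} for the mixed products yielding $\pm\omega$, and use the Moufang identity of Lemma~\ref{L:Moufang} for the genuinely three-generator cross products $(ab)(bc)$. Your case division is in fact slightly more thorough than the paper's, since you explicitly treat $\omega\cdot(ij)$, $(ij)\cdot\omega$, $\omega^2$ and their cyclic variants via the subalgebra $\langle ij,k\rangle$, whereas the paper's proof does not explicitly isolate these products.
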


\begin{proof}
By Proposition~\ref{P:4cliff}, the $F$-spans of the subsets
$$
\set{\pm1,\pm i,\pm j,\pm ij}\,,\
\set{\pm1,\pm j,\pm k,\pm jk}\,,\ \mbox{and }
\set{\pm1,\pm k,\pm i,\pm ki}
$$
are closed under multiplication. By Lemma~\ref{L:soctnons}, the products with domains
$$
\set{\pm ij,\pm jk,\pm ki}
\times
\set{\pm i,\pm j,\pm k}
\mbox{and}
\set{\pm i,\pm j,\pm k}
\times
\set{\pm ij,\pm jk,\pm ki}
$$
lie in $\set{\pm\omega}$.
The products with domains
$$
\set{\pm\omega}
\times
\set{\pm i,\pm j,\pm k}
\mbox{ and }
\set{\pm i,\pm j,\pm k}
\times
\set{\pm\omega}
$$
lie in the $F$-span of $\set{\pm ij,\pm jk,\pm ki}$ since one has
\begin{equation*}
\omega i\overset{\eqref{E:FormsOfO}}=[(jk)i]i\overset{\mathrm{Th.}~\ref{L:2assoc}}=(jk)i^2=\tfrac12jkB(i,i)
\end{equation*}
and
$$
i\omega\overset{\eqref{E:FormsOfO}}=-i[i(jk)]\overset{\mathrm{Th.}~\ref{L:2assoc}}=-i^2(jk)=-\tfrac12jkB(i,i)
$$
together with their images under elements of the cyclic group $\braket{(i\ j\ k)}$. Finally, the computations $(jk)(ij)\overset{\eqref{E:Moufang1}}=[j(ki)]j\overset{\eqref{E:FormsOfO}}=$
\begin{equation}\label{E:OmjikBJJ}
\omega j\overset{\eqref{E:FormsOfO}}=[(ki)j]j\overset{\mathrm{Th.}~\ref{L:2assoc}}=(ki)j^2=\tfrac12kiB(j,j)\,,
\end{equation}
$$
(ij)(jk)=(ji)(kj)\overset{\eqref{E:Moufang1}}=[j(ik)]j\overset{\eqref{E:FormsOfO}}=-\omega j
\overset{\eqref{E:OmjikBJJ}}=-\tfrac12kiB(j,j)\,,
$$
and
$$
(ij)(ij)=-(ij)(ji)\overset{\eqref{E:Moufang1}}=-(i(jj))i=-\tfrac12B(j,j)i^2=-\tfrac14B(i,i)B(j,j)
\,,
$$
together with their images under elements of the cyclic group $\braket{(i\ j\ k)}$,
serve to show that the $F$-span of the set $\set{\pm1,\pm ij,\pm jk,\pm ki}$ is closed under multiplication.
\end{proof}

\begin{corollary}
If $\set{B(a,a)|a\in\set{i,j,k}}\subseteq\set{\pm2}$, then the set $\set{\pm1,\pm ij,\pm jk,\pm ki}$ is closed under multiplication.
\end{corollary}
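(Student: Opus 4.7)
The plan is to read off the corollary directly from the computations performed inside the proof of Lemma~\ref{L:SisClosd}, restricted to the $8$-element subset $S'=\{\pm 1,\pm ij,\pm jk,\pm ki\}$, and to observe that the normalization hypothesis $B(a,a)\in\{\pm 2\}$ forces every scalar coefficient appearing there to collapse to $\pm 1$.

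First I would dispose of the trivial cases: products involving $\pm 1$ clearly remain in $S'$. That leaves the nine products of pairs drawn from $\{\pm ij,\pm jk,\pm ki\}$, which split into three squares and six mixed products.

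Next I would recall that Lemma~\ref{L:SisClosd} already established
\[
(ij)(ij)=-\tfrac14 B(i,i)B(j,j),\qquad (ij)(jk)=-\tfrac12 ki\cdot B(j,j),\qquad (jk)(ij)=\tfrac12 ki\cdot B(j,j),
\]
and that the remaining products are obtained by applying the cyclic permutation $(i\ j\ k)$ used throughout that lemma. Under the hypothesis, $\tfrac12 B(a,a)\in\{\pm 1\}$ for each $a\in\{i,j,k\}$, and $\tfrac14 B(a,a)B(b,b)\in\{\pm 1\}$ for each pair. Hence each of the three squares lies in $\{\pm 1\}$ and each of the six mixed products lies in $\{\pm ij,\pm jk,\pm ki\}$, so no product escapes to $\{\pm i,\pm j,\pm k,\pm\omega\}$, which is the only thing that could go wrong.

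There is no real obstacle here: the corollary is essentially an arithmetic observation about the scalar coefficients produced by Lemma~\ref{L:SisClosd}. The only mildly non-trivial point, which I would make explicit, is that the mixed products really do produce \emph{bivectors} rather than trivectors or vectors, and this is already witnessed by the displayed formulas from the lemma's proof; sign tracking then suffices to conclude closure of $S'$ under multiplication.
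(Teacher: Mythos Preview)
Your proposal is correct and matches the paper's approach: the corollary is stated without proof immediately after Lemma~\ref{L:SisClosd}, and is meant to be read off from the three displayed product formulas at the end of that lemma's proof together with their cyclic permutations. Your observation that the hypothesis $B(a,a)\in\{\pm2\}$ collapses the coefficients $\tfrac12 B(a,a)$ and $\tfrac14 B(a,a)B(b,b)$ to $\pm1$ is exactly the intended one-line justification.
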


\begin{theorem}\label{T:constructingoct}
Consider a three-dimensional vector space $V$ spanned by $\set{i,j,k}$ over a field $F$ which is not of characteristic 2. Suppose that the set $\set{i,j,k}$ is orthogonal with respect to a symmetric bilinear form $B$ on $V$.
\begin{enumerate}
\item[$(\mathrm a)$]
Suppose $B(i,i)=B(j,j)=B(k,k)=-2$. Then the Kingdon algebra $K(V,B)$ is isomorphic to the octonion algebra $\mathbb O$ over $F$.
\item[$(\mathrm b)$]
Suppose $B(i,i)=B(j,j)=-2$ and $B(k,k)=2$. Then the Kingdon algebra $K(V,B)$ is isomorphic to the split octonion algebra $\widetilde{\mathbb O}$ over $F$.
\end{enumerate}
\end{theorem}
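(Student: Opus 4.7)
The plan is to use the universality property Proposition~\ref{P:UnvPrKVB} to build a surjective homomorphism from $K(V,B)$ onto $\mathbb{O}$ (respectively $\widetilde{\mathbb{O}}$), and then to upgrade surjectivity to an isomorphism by way of a dimension bound forced by the preceding lemmas.

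First I establish the bound $\dim_F K(V,B)\le 8$. By Lemma~\ref{L:BsGenKVB} the algebra $K(V,B)$ is generated as an $F$-algebra by $\{i,j,k\}$, while Lemma~\ref{L:SisClosd} shows that the $F$-span $W$ of the eight-element set $\{1,i,j,k,ij,jk,ki,\omega\}$ is closed under multiplication. Since $W$ is a subalgebra containing the algebra generators, it coincides with $K(V,B)$, giving the bound.

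Next I exhibit the universality target. In case~(a) let $A=\mathbb{O}\cong\mathbb{H}^{-1}$, and in case~(b) let $A=\widetilde{\mathbb{O}}\cong\mathbb{H}^{+1}$, both understood as Cayley-Dickson doubles over $F$ (Definition~\ref{D:Cayley}). Set $i'=(i,0)$, $j'=(j,0)$, $k'=(0,1)$, where $i,j$ are standard imaginary quaternions. A direct application of \eqref{E:GenCa-Di} gives $(i')^2=(j')^2=-1$, $(k')^2=\mp 1$, the mutual anticommutativity of $i',j',k'$, and the palindromic identity $(i'j')k'=k'(j'i')$. Now define $\theta\colon V\to A$ by the linear extension of $i\mapsto i'$, $j\mapsto j'$, $k\mapsto k'$. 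Relation \eqref{E:thetaVAB} is verified on basis pairs (diagonal from $(x')^2=\tfrac12 B(x,x)$, off-diagonal from orthogonality in both spaces) and extends by bilinearity. For \eqref{E:thetaVAT} it suffices, by trilinearity, to check basis triples from $\{i,j,k\}$: triples with a repeated argument are settled by alternativity and flexibility (Corollary~\ref{C:AltFlexi}), while the all-distinct orderings reduce, using the anticommutativity of $i',j',k'$, to the single identity $(i'j')k'=k'(j'i')$ already confirmed.

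Proposition~\ref{P:UnvPrKVB} then delivers a homomorphism $\Theta\colon K(V,B)\to A$ extending $\theta$. Its image contains $i',j',k'$, which generate $A$, so $\Theta$ is surjective; combined with $\dim_F K(V,B)\le 8=\dim_F A$ this forces $\Theta$ to be an isomorphism. The main obstacle in executing this plan is the verification of the palindromic identity $(i'j')k'=k'(j'i')$ inside the Cayley-Dickson double—the step at which the nonassociative structure of the target is genuinely used—together with the bookkeeping needed to confirm that, in combination with anticommutativity and alternativity, it suffices to cover all orderings of $(i',j',k')$ appearing in the trilinear check of \eqref{E:thetaVAT}.
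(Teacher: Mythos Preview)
Your proof is correct and follows essentially the same architecture as the paper's: bound $\dim K(V,B)\le 8$ via Lemma~\ref{L:SisClosd}, build a surjective algebra homomorphism onto the $8$-dimensional target, and conclude by dimension comparison. The only cosmetic difference is that the paper invokes the universal property of $\mathrm{Alt}[V]$ (Corollary~\ref{L:UPFA}) and then checks $J_B\subseteq\ker\Psi$, whereas you invoke Proposition~\ref{P:UnvPrKVB} directly and verify \eqref{E:thetaVAB}--\eqref{E:thetaVAT}; these are the same verification packaged two ways. One small caution: your reduction of the six all-distinct palindromic checks to the single identity $(i'j')k'=k'(j'i')$ genuinely requires alternativity (e.g., the linearized identity $(ca)b+(cb)a=0$ for pairwise anticommuting elements) and not merely the pairwise anticommutativity of $i',j',k'$ as your first mention suggests---you acknowledge this in your closing sentence, but it would be cleaner to say so at the point of use.
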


\begin{proof}
We prove (a). The proof of (b) is similar. By Lemma~\ref{L:SisClosd}, it is apparent that $K(V,B)$ is an at most $8$-dimensional alternative algebra over the field $F$.

We have an injective linear map $\psi:V\to \mathbb{O}$ which takes $i$ to $i$, $j$ to $j$ and $k$ to $k$. Then, by Corollary~\ref{L:UPFA} and the alternativity of the octonions, we have the commutative diagram 
$$
\xymatrix{
\mathrm{Alt}[V]
\ar@{-->}[dr]^{\Psi}
\\
V
\ar[r]_{\psi}
\ar@{^(->}[u]
&
\mathbb O
}
$$
with an algebra homomorphism $\Psi$ extending $\psi$. Since the image of $V$ generates all of $\mathbb{O}$, the homomorphism $\Psi$ is surjective. The kernel of $\Psi$ includes the ideal $J_B$ of $\mathrm{Alt}[V]$ generated by
$$
\set{ab+ba-B(a,b)\cdot 1, (ab)c-c(ba)|a,b,c\in\set{i,j,k}}
$$ 
(cf. Lemma~\ref{L:standardgenerators}). Thus, we have a surjective algebra homomorphism
\begin{equation}\label{E:PrimePsi}
\Psi':K(V,B)=\mathrm{Alt}[V]/J_B\to\mathbb{O}\,.
\end{equation} 
This map is a surjective linear transformation from a space of dimension at most $8$ to one of dimension exactly $8$. Therefore, $K(V,B)$ must also have dimension at least $8$, and hence exactly $8$. We take \eqref{E:PrimePsi} as the claimed isomorphism.
\end{proof}

\begin{remark}\label{R:constructingoct}
(a)
In conjunction with  Proposition~\ref{P:UnvPrKVB}, Theorem~\ref{T:constructingoct} constructs the (split) octonions by a universality property.
\vskip 2mm
\noindent
(b)
Our universality property of the octonions may replace the use of Cayley-Dickson algebra properties in \cite[\S6.8]{ConwaySmith} for the determination of the rank of the real form of the Lie group $G_2$, the automorphism group of the real octonion algebra. An automorphism is fixed by its action on the set $\set{i,j,k}$ of Theorem~\ref{T:constructingoct}. The image of $i$ may first be chosen to lie anywhere on the $6$-dimensional manifold of octonion elements of octonion norm $1$ in the $7$-dimensional orthogonal complement of $\mathbb R$ under the inner product defined by the octonion norm. Next, the image of $j$ may be chosen to lie anywhere on the $5$-dimensional manifold of octonion elements of octonion norm $1$ in the $6$-dimensional orthogonal complement of the subalgebra generated by $\set i$. Finally, the image of $k$ may be chosen to lie anywhere on the $3$-dimensional manifold of octonion elements of octonion norm $1$ in the $4$-dimensional orthogonal complement of the subalgebra generated by $\set{i,j}$. Thus, the rank of $G_2$ is obtained as $6+5+3=14$.
\end{remark}

\subsection{Admissible triples}\label{SS:admstrpl}

In the octonion algebra $\mathbb O$, the triples that appear in the set
\begin{equation}\label{E:admstrpl}
\mathcal A=
\begin{Bmatrix}
&(i,j,ij), &(j,k,jk), &(k,i,ki),\\
\\
&(i,\omega,jk), &(j,\omega,ki), &(k,\omega,ij),\\
\\
& &(ij,jk,ki)
\end{Bmatrix}\,,
\end{equation}
understood along with their cyclic permutations (for example $(j,ij,i)$, $(ij,i,j)$, and so on) are described as \emph{admissible} \cite[p.676]{GHW}. If $(x,y,z)$ is an admissible triple, then $xy=z$ and $yx=-z$ in $\mathbb O$. Note that the final triple displayed in \eqref{E:admstrpl} is the only one that is homogeneous in the grading of Proposition~\ref{P:KingZ2gr}, consisting entirely of even elements. Thus, we may refer to it and its cyclic permutations as \emph{even triples}.

A record of the admissible triples (in, say, a multiplication table) is a standard way of specifying the octonion algebra $\mathbb O$ \cite[\S2.1]{Baez02}, \cite[\S A.1]{GHW}. This record is best presented geometrically on the basis of the \emph{Fano plane}, the projective plane $\mathsf{PG}_2(2)$ over the field $\mathsf{GF}(2)$ of two elements. Here, the lines are the underlying sets of the admissible triples, and the points are the elements of $\mathbb O$ that appear in the admissible triples. Beyond the Fano plane structure, an orientation to specify the cyclic order on the lines is required \cite[Fig.~A1]{GHW}. The diagram
\begin{equation}\label{E:FanoDisp}
\xymatrix{
&
&
k
\ar@<.5ex>[ddd]
\ar@<.5ex>[ddr]
\ar@<-1ex>@/_4mm/[ddddll]
\\
\\
&
ki
\ar@<.5ex>[uur]
\ar@/^8mm/[rr]
\ar@<.5ex>[ddrrr]
&
&
jk
\ar@<.5ex>[ddr]
\ar@/^8mm/[ddl]
\ar@<-.5ex>[ddlll]
\\
&
&
\omega
\ar[ur]
\ar[ul]
\ar@<.5ex>[d]
\\
i
\ar@<.5ex>[uur]
\ar[urr]
\ar@<-1ex>@/_4mm/[rrrr]
&
&
ij
\ar@<.5ex>[ll]
\ar@/^8mm/[uul]
\ar@<.5ex>@/^4mm/[uuuu]
&
&
j
\ar[ull]
\ar@<.5ex>[ll]
\ar@<-1ex>@/_4mm/[uuuull]
}
\end{equation}
\vskip 8mm
\noindent
displays the oriented Fano plane for the admissible triples \eqref{E:admstrpl}.

While the actual display \eqref{E:FanoDisp} itself has fewer symmetries than the oriented Fano plane, the symmetries of the display do line up well with the Kingdon algebra realization of the octonion algebra $\mathbb O$. If \eqref{E:FanoDisp} is ``filled in'' to a two-simplex $\Delta_2$ as the convex hull of $\set{i,j,k}$, then we have the even elements $ij,jk,ki$ as the barycenters of the edges, and the volume element $\omega$ as the barycenter of the simplex. In particular, the display automorphisms preserve the set of even triples.

The symmetries of the display are obtained from automorphisms of $\mathbb O$ that are induced from permutations of $\set{i,j,k}$ according to the universality property of Proposition~\ref{P:UnvPrKVB}. The transposition $(i\ j)$, for instance, reflects the simplex $\Delta_2$ about the vertical $(k,\omega,ij)$ in \eqref{E:FanoDisp}, reversing the orientations along the bottom edge, and also around the projective line $(ij\ ki\ jk)$ of even elements.

\begin{remark}
It is easy to remember the orientations appearing in \eqref{E:FanoDisp}. Those on the boundary of the simplex $\Delta_2$ are determined by the way that admissible triples encode products. The arrows from the vertices are directed in to the volume element, thence out to the edge barycenters. Finally, the orientation of the ``inner circle,'' the projective line containing the edge barycenters, is the opposite of the induced action $(ij\ jk\ ki)$ of the cyclic permutation $(i\ j\ k)$.
\end{remark}

\section{Kingdon algebras on $3$-dimensional formed spaces}\label{S:Real}

\subsection{Alternative exterior algebras}\label{S:altextalg}

The actual dimension $2^n$ of a Clifford algebra over a formed space of dimension $n$ is not immediately apparent from its universality property. The dimension $2^n$ is fixed when the Clifford algebra is taken as a quantization of an exterior algebra \cite[\S2.1]{CC1954}, \cite[p.102]{Chevalley2}. Here, we introduce an eight-dimensional alternative, nonassociative version $\Lambda'(\mathbb R^3)$ of the exterior algebra of the three-dimensional real vector space $\mathbb R^3$, later recognizing it as the Kingdon algebra $K(\mathbb R^3,0)$ for the zero bilinear form on $\mathbb R^3$. Ultimately (\S\ref{SS:QuAlExAl}), we may then interpret any Kingdon algebra on the $3$-dimensional formed space $\mathbb R^3$ as a quantization of $\Lambda'(\mathbb R^3)$.

\begin{definition}\label{D:altextalg}
Consider the $4$-dimensional exterior algebra $\Lambda(\mathbb R^2)$, equipped with the anti-automorphism  $*\colon\Lambda(\mathbb R^2)\to\Lambda(\mathbb R^2)$ that negates $\mathbb R^2=\Lambda(\mathbb R^2)_1$ and fixes $\Lambda(\mathbb R^2)_0\oplus\Lambda(\mathbb R^2)_2$. Then the \emph{alternative exterior algebra} $\Lambda'(\mathbb{R}^3)$ is the result $\Lambda(\mathbb R^2)^0$ of applying the Cayley-Dickson process \eqref{E:GenCa-Di} to $\Lambda(\mathbb R^2)$, with $\gamma=0$.
\end{definition}

\begin{remark}
Following our use of the matronymic ``Kingdon'' for alternative Clifford algebras, we are inclined to describe alternative exterior (Grassmann) algebras as \emph{Medenwald algebras}.
\end{remark}

\begin{lemma}\label{L:altextalg}
Consider the alternative exterior algebra $\Lambda'(\mathbb{R}^3)$.
\begin{enumerate}
\item[$(\mathrm a)$]
It is alternative, but not associative, as the result of applying the Cayley-Dickson process to an algebra which is associative, but not commutative.
\item[$(\mathrm b)$]
It is graded as $\Lambda'(\mathbb{R}^3)=\bigoplus_{h=0}^3(\Lambda'(\mathbb{R}^3))_h$, where the $0$-, $1$-, $2$- and $3$-components have respective bases $\set 1$, $\set{i,j,k}$, $\set{ij,jk,ki}$ and $\set\omega$.
\item[$(\mathrm c)$]
The multiplication structure of $\Lambda'(\mathbb{R}^3)$ is given by Table~\ref{Tb:altextalg}, taking \eqref{E:GenACijk} as the basis for $\Lambda'(\mathbb{R}^3)_1=\mathbb R^3$. The basis element $\omega=(ij)k$ of $\Lambda'(\mathbb{R}^3)_3$ is described as the \emph{volume element} (compare Definition~\ref{D:FormsOfO}).
\end{enumerate} 
\end{lemma}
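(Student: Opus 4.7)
The overall plan is to exploit the definition $\Lambda'(\mathbb{R}^3) = \Lambda(\mathbb{R}^2)^0$ and push the three assertions through the properties of the Cayley-Dickson process recorded in Proposition~\ref{P:ropofC-D}, supplemented by an explicit coordinatization of the doubled vector space that supplies the bases and the multiplication table.

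For part~(a), I invoke Proposition~\ref{P:ropofC-D}(d), which says that $A^\gamma$ is alternative exactly when $A$ is associative; since the exterior algebra $\Lambda(\mathbb{R}^2)$ is associative, $\Lambda'(\mathbb{R}^3)$ is alternative. For non-associativity, Proposition~\ref{P:ropofC-D}(c) requires $A$ to be commutative before $A^\gamma$ can be associative, but in $\Lambda(\mathbb{R}^2)$ the degree-one generators satisfy $e_1 e_2 = -e_2 e_1 \neq e_2 e_1$, so $\Lambda(\mathbb{R}^2)$ is non-commutative and hence $\Lambda'(\mathbb{R}^3)$ is non-associative.

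For part~(b), the Cayley-Dickson doubling yields $\Lambda'(\mathbb{R}^3) = \Lambda(\mathbb{R}^2) \oplus \Lambda(\mathbb{R}^2)$ as a vector space of dimension $8$. Fixing a basis $\{e_1, e_2\}$ of $\mathbb{R}^2$, I set
\begin{equation*}
i = (e_1, 0), \qquad j = (e_2, 0), \qquad k = (0, 1),
\end{equation*}
and then compute the remaining stated basis elements from the product $(x_1, y_1)(x_2, y_2) = (x_1 x_2, x_1^* y_2 + x_2 y_1)$ obtained by specializing \eqref{E:GenCa-Di} at $\gamma = 0$, remembering that $*$ acts as $+1$ on $\Lambda(\mathbb{R}^2)_0 \oplus \Lambda(\mathbb{R}^2)_2$ and as $-1$ on $\Lambda(\mathbb{R}^2)_1$. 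A direct calculation gives $ij = (e_1 e_2, 0)$, $jk = (0, -e_2)$, $ki = (0, e_1)$ and $\omega = (ij)k = (0, -e_1 e_2)$, so the $\mathbb{R}$-spans of $\{1\}$, $\{i, j, k\}$, $\{ij, jk, ki\}$ and $\{\omega\}$ are respectively $\Lambda(\mathbb{R}^2)_0 \oplus 0$, $(\Lambda(\mathbb{R}^2)_1 \oplus 0) \oplus (0 \oplus \Lambda(\mathbb{R}^2)_0)$, $(\Lambda(\mathbb{R}^2)_2 \oplus 0) \oplus (0 \oplus \Lambda(\mathbb{R}^2)_1)$ and $0 \oplus \Lambda(\mathbb{R}^2)_2$. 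These four subspaces have dimensions $1, 3, 3, 1$ and sum directly to the full algebra, yielding the asserted graded decomposition.

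Part~(c) is then a routine, if bookkeeping-intensive, verification: each entry of Table~\ref{Tb:altextalg} is evaluated from the doubling product above, using only the elementary multiplication of $\Lambda(\mathbb{R}^2)$ and the action of $*$. The main obstacle I anticipate is the sign management arising from $*$ whenever products cross the two Cayley-Dickson summands or produce $\omega$; because $*$ acts as $\pm 1$ on each homogeneous component of $\Lambda(\mathbb{R}^2)$, this accounting is systematic rather than delicate, and the table can be filled in one cell at a time.
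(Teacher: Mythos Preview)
The paper states this lemma without proof; it is presented as a collection of observations whose verification is left implicit. Your proposal supplies exactly the argument one would expect: invoking Proposition~\ref{P:ropofC-D}(c),(d) for part~(a), coordinatizing the Cayley-Dickson double for part~(b), and then computing the table entry-by-entry for part~(c). This is the natural route and is correct in outline.

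Two small points. First, there is a sign slip in your computation of $\omega$: since $e_1e_2$ has degree~$2$ in $\Lambda(\mathbb R^2)$, the involution fixes it, so $(ij)k=(e_1e_2,0)(0,1)=(0,(e_1e_2)^*\cdot 1)=(0,e_1e_2)$, not $(0,-e_1e_2)$. This does not affect the vector-space decomposition you describe, but it will propagate into your table verification if uncorrected. Second, for part~(b) you exhibit the direct-sum decomposition of the underlying vector space but do not explicitly check that the product respects the grading. This follows cleanly from your identification of $(\Lambda'(\mathbb R^3))_h$ with $\Lambda(\mathbb R^2)_h\oplus\Lambda(\mathbb R^2)_{h-1}$ together with the doubling formula, or alternatively may be read off from the completed Table~\ref{Tb:altextalg}; either way it deserves a sentence.
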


\renewcommand{\arraystretch}{1.5}
\begin{table}[hbt]\label{Table1}
\centering
\begin{tabular}{|c||c|c|c||c|c|c||c|}
\hline
1 & $i$ & $j$ & $k$ & $ij$ & $jk$ & $ki$ & $\omega$\\
\hline
\hline
$i$ & $0$ & $ij$ & $-ki$ & $0$ & $-\omega$ & $0$ & $0$\\
\hline
$j$ & $-ij$ & $0$ & $jk$ & $0$ & $0$ & $-\omega$ & $0$\\
\hline
$k$ & $ki$ & $-jk$ & $0$ & $-\omega$ & $0$ & $0$ & $0$\\
\hline
\hline
$ij$ & $0$ & $0$ & $\omega$ & $0$ & $0$ & $0$ & $0$\\
\hline
$jk$ & $\omega$ & $0$ & $0$ & $0$ & $0$ & $0$ & $0$\\
\hline
$ki$ & $0$ & $\omega$ & $0$ & $0$ & $0$ & $0$ & $0$\\
\hline
\hline
$\omega$ & $0$ & $0$ & $0$ & $0$ & $0$ & $0$ & $0$\\
\hline
\end{tabular}
\vskip 3mm
\caption{The multiplication table of the $8$-dimensional alternative exterior algebra.}\label{Tb:altextalg}
\end{table}
\renewcommand{\arraystretch}{1}

Definition~\ref{D:altextalg} may be extended to more general fields.

\begin{definition}\label{D:AltExAlg}
Suppose that $F$ is a field in which $2$ is invertible. Then the \emph{alternative exterior algebra} $\Lambda'(F^3)$ is the eight-dimensional algebra with basis
\begin{equation}\label{E:BasLmbda}
\set{1,i,j,k,ij,jk,ki,\omega}
\end{equation}
and multiplication given by Table~\ref{Tb:altextalg}.
\end{definition}

\begin{remark}
(a)
By  Lemma~\ref{L:altextalg}(c), Definition~\ref{D:AltExAlg} is consistent with Definition~\ref{D:altextalg} in the case where $F=\mathbb R$.
\vskip 2mm
\noindent
(b)
By Lemma~\ref{L:altextalg}(a), the ring $\Lambda'(\mathbb{R}^3)$ is alternative. Therefore, its subring $\Lambda'(\mathbb{Z}^3)$ generated by \eqref{E:BasLmbda} is also alternative. It follows that for each field $F$ where $2$ is invertible, the alternative exterior algebra $\Lambda'(F^3)=F\otimes\Lambda'(\mathbb{Z}^3)$ is indeed alternative.
\vskip 2mm
\noindent
(c)
As noted in \cite[Prop.~3.1]{QAA} in the context of an algebraically closed field $F$, the linear span of $\set{i,j,k,ij,jk,ki,\omega}$ in $\Lambda'(F^3)$ is a seven-dimensional Maltsev algebra: it is closed under multiplication, and each side of the Maltsev identity
$$
(xy)(xz)=((xy)z)x+((yz)x)x+((zx)x)y
$$
reduces to $0$ on this subspace. Compare \cite[Th.~4.1(i)]{QAA}.
\vskip 2mm
\noindent
(d)
The distinction between the associative exterior algebra $\Lambda(F^3)$ and the alternative exterior algebra $\Lambda'(F^3)$ may be appreciated by noting the equation $(i\wedge j)\wedge k=-k\wedge(j\wedge i)$ which holds in the former, in contrast with the palindromic relation $(ij)k=k(ji)$ that holds in the latter.
\end{remark}

\begin{proposition}\label{P:LamPrKIin}
Let $F$ be a field where $2$ is invertible. Then $\Lambda'(F^3)$ is (isomorphic to) the Kingdon algebra $K(F^3,0)$.
\end{proposition}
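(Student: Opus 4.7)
The plan is to apply the universality property of Proposition~\ref{P:UnvPrKVB} with the alternative algebra $\Lambda'(F^3)$ as the target, using the canonical inclusion $\theta\colon F^3\hookrightarrow\Lambda'(F^3)$ onto the degree-$1$ component. This will yield a homomorphism $\Theta\colon K(F^3,0)\to\Lambda'(F^3)$, and I would finish by showing it is an isomorphism via a dimension count that invokes Lemma~\ref{L:SisClosd}.

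First I would verify that $\theta$ satisfies the hypotheses of Proposition~\ref{P:UnvPrKVB} with $B=0$. By multilinearity, it suffices to test both relations on the basis $\{i,j,k\}$. Relation \eqref{E:thetaVAB}, which becomes $uv+vu=0$, is immediate from Table~\ref{Tb:altextalg}: each off-diagonal entry of the upper-left $3\times 3$ block anticommutes, and each diagonal entry vanishes. For the palindromic relation \eqref{E:thetaVAT}, any triple involving a repeated basis vector yields $0$ on both sides (by the alternative identity plus the vanishing entries of the table), so only the six permutations of $(i,j,k)$ remain. Reading off the table, $(ij)k=(jk)i=(ki)j=\omega$, while $k(ji)=-k(ij)=-(-\omega)=\omega$, and similarly for its cyclic images $i(kj)$ and $j(ik)$; the three remaining permutations reduce to these by the pairwise anticommutativity already established.

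Second, Proposition~\ref{P:UnvPrKVB} delivers an algebra homomorphism $\Theta\colon K(F^3,0)\to\Lambda'(F^3)$. Since $\{i,j,k\}$ generates $\Lambda'(F^3)$ as an algebra --- every basis element of \eqref{E:BasLmbda} is a word in $i$, $j$, $k$ --- the map $\Theta$ is surjective. On the other side, the set $\{i,j,k\}$ is trivially $B$-orthogonal for $B=0$, so Lemma~\ref{L:soctnons} and Lemma~\ref{L:SisClosd} apply: the $F$-span of $\{\pm 1,\pm i,\pm j,\pm k,\pm ij,\pm jk,\pm ki,\pm\omega\}$ is closed under multiplication in $K(F^3,0)$ and contains the algebra generators, so it equals $K(F^3,0)$. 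Hence $\dim K(F^3,0)\le 8=\dim\Lambda'(F^3)$, which forces $\Theta$ to be a linear isomorphism, and therefore an algebra isomorphism.

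There is no real obstacle beyond the bookkeeping in the first step; the whole argument is structurally parallel to the proof of Theorem~\ref{T:constructingoct}, except that the target $\Lambda'(F^3)$ replaces the (split) octonions and the relevant relations are easier to check because every bilinear form value is zero. The one point requiring care is to confirm that the palindromic relation need only be inspected on triples of basis vectors (trilinearity) and that the degenerate triples cause no trouble (alternativity combined with the vanishing of the $e_a e_a$ entries of Table~\ref{Tb:altextalg}).
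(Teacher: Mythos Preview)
Your proposal is correct and follows essentially the same approach as the paper: invoke Proposition~\ref{P:UnvPrKVB} with the inclusion $F^3\hookrightarrow\Lambda'(F^3)$ to obtain a surjection $\Theta$, then bound $\dim K(F^3,0)\le 8$ to conclude. The only cosmetic differences are that the paper verifies the hypotheses in one stroke by citing the anti-symmetry of the lower-right $7\times 7$ block of Table~\ref{Tb:altextalg}, and for the dimension bound it appeals to Theorem~\ref{T:reductionstep} rather than Lemma~\ref{L:SisClosd} (either works, and indeed Lemma~\ref{L:SisClosd} is what the paper itself uses in the parallel proof of Theorem~\ref{T:cliffordtocayley}).
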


\begin{proof}
Consider the instance 
$$
\xymatrix{
K(F^3,0)
\ar@{-->}[dr]^{\Theta}
\\
\,F^3\,
\ar[u]^{\iota}
\ar@{^{(}->}[r]_-{\theta}
&
\Lambda'(F^3)
}
$$
of the universality property for Kingdon algebras (Proposition~\ref{P:UnvPrKVB}), which applies by Lemma~\ref{L:standardgenerators} thanks to the anti-symmetry of the lower right $(7\times 7)$-subtable of Table~\ref{Tb:altextalg}.

Because $F^3$ generates the algebra $\Lambda'(F^3)$, the homomorphism $\Theta$ is surjective, whence $\dim K(F^3,0)\ge 8$. On the other hand, Theorem~\ref{T:reductionstep} and the relations of Table~\ref{Tb:altextalg} show that the basis set \eqref{E:BasLmbda} offers a full set of representatives for the cosets comprising $K(F^3,0)=\mathrm{Alt}[F^3]$, so $\dim K(F^3,0)\le 8$. Thus $\dim K(F^3,0)=8$, and $\Theta$ is the required isomorphism.
\end{proof}

We now use Kingdon algebras to give an appropriate definition of the alternative exterior algebra over a vector space of arbitrary dimension, assuming that the base field is not of characteristic 2.

\begin{definition}\label{D:gnltexal}
Suppose that $2$ is invertible in a field $F$. Then the \emph{alternative exterior algebra} $\Lambda'(V)$ over an $F$-vector space $V$ is defined as $K(V,0)$.
\end{definition}

\begin{problem}\label{P:gnltexal}
Investigate the structure and geometric significance of the general alternative exterior algebras of Definition~\ref{D:gnltexal}.
\end{problem}

\subsection{Kingdon algebras and Cayley-Dickson algebras}

Let $F$ be a field where $2$ is invertible. We have the following generalization of Theorem~\ref{T:constructingoct}.

\begin{theorem}\label{T:cliffordtocayley}
A Kingdon algebra $K(V,B)$ over a formed space $(V,B)$ of dimension $3$ is isomorphic to an eight-dimensional Cayley-Dickson algebra.
\end{theorem}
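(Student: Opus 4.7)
The plan is to diagonalize $B$, isolate one basis vector, and realize $K(V,B)$ as a Cayley--Dickson double of the $2$-dimensional Clifford algebra of the other two. Since $2$ is invertible, pick an orthogonal basis $\{i,j,k\}$ of $V$ with respect to $B$. Write $W = Fi + Fj$, let $B' = B|_W$, and set $\gamma = B(k,k)/2$. By Proposition~\ref{P:4cliff}, $K(W,B') = \mathrm{Cl}(W,B')$ is associative and symmetric quadratic of dimension $4$; by Definition~\ref{D:Cayley} and Proposition~\ref{P:ropofC-D}(d), the Cayley--Dickson algebra $A := \mathrm{Cl}(W,B')^\gamma$ is an $8$-dimensional alternative $F$-algebra, which will serve as the candidate target.

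Define a linear map $\theta\colon V \to A$ by $\theta(i) = (i,0)$, $\theta(j) = (j,0)$, $\theta(k) = (0,1)$, and verify the hypotheses of Proposition~\ref{P:UnvPrKVB}. The Clifford relation \eqref{E:thetaVAB} follows from direct application of the Cayley--Dickson formula: on $(i,j)$ it reduces to the Clifford relation already holding in $W$, while $\theta(k)^2 = (\gamma,0) = (B(k,k)/2)\cdot 1$ and $\theta(i)\theta(k) + \theta(k)\theta(i) = (0,i) + (0,-i) = 0 = B(i,k)\cdot 1$, with analogous identities for the remaining pairs. The palindromic relation \eqref{E:thetaVAT} is the main obstacle. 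My plan is to exploit the symmetric quadratic structure of $A$: every $\theta(v)$ is pure imaginary of trace $0$, so $\theta(v)^* = -\theta(v)$, and since $*$ is an anti-automorphism one obtains $((uv)w)^* = w^*(v^*u^*) = -w(vu)$ for pure imaginary $u,v,w$. Hence $(uv)w - w(vu) = (uv)w + ((uv)w)^* = T((uv)w)\cdot 1 \in F$, and the task reduces to showing this trace vanishes. Using $T((a,b)) = T_{\mathrm{Cl}}(a)$ together with the Cayley--Dickson product, the verification collapses to the claim that $T_{\mathrm{Cl}}((ab)c) = 0$ for all $a,b,c \in W$. This holds because $W^2 \subseteq F \oplus F\cdot ij$ and $(F \oplus F\cdot ij)\cdot W \subseteq W$ inside $\mathrm{Cl}(W,B')$, so $(ab)c$ always lies in $W$ and is therefore traceless.

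With $\theta$ verified, the universality property yields an algebra homomorphism $\Theta\colon K(V,B) \to A$. Surjectivity is immediate: $\theta(i),\theta(j)$ generate $\mathrm{Cl}(W,B')\times\{0\}$ as the first copy, and the Cayley--Dickson product gives $(a,0)\theta(k) = (0,a)$ for each $a \in \mathrm{Cl}(W,B')$, delivering the second copy as well. On the other hand, Lemma~\ref{L:SisClosd} bounds $\dim_F K(V,B) \le 8$, since the span of $\{\pm 1,\pm i,\pm j,\pm k,\pm ij,\pm jk,\pm ki,\pm \omega\}$ is a closed subalgebra containing the generators. Thus $\Theta$ is a surjection from a space of dimension at most $8$ onto an $8$-dimensional space, so both dimensions equal $8$ and $\Theta$ is the desired isomorphism of $K(V,B)$ with the Cayley--Dickson algebra $\mathrm{Cl}(W,B')^\gamma$.
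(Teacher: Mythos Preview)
Your proof is correct and follows essentially the same route as the paper's. Both arguments diagonalize $B$, invoke Lemma~\ref{L:SisClosd} for the upper bound $\dim K(V,B)\le 8$, build a surjection onto an $8$-dimensional Cayley--Dickson algebra, and conclude by a dimension count; the only cosmetic differences are that the paper targets the three-fold double $F^{\varepsilon_1\varepsilon_2\varepsilon_3}$ and factors through $\mathrm{Alt}[V]$ via Corollary~\ref{L:UPFA}, while you target the single double $\mathrm{Cl}(W,B')^\gamma$ and invoke Proposition~\ref{P:UnvPrKVB} directly, supplying an explicit trace argument for the palindromic relation where the paper simply cites~\eqref{E:plindrCD}.
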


\begin{proof}
Suppose that the basis $\set{i,j,k}$ of $V$ is orthogonal with respect to the symmetric bilinear form $B$ on $V$, where
\begin{align}\label{E:BijkVEps} 
B(i,i)=2\varepsilon_1\,,\
B(j,j)=2\varepsilon_2\,,\
B(k,k)=2\varepsilon_3\,.
\end{align}
We will be considering the Cayley-Dickson algebra $ F^{\varepsilon_1\varepsilon_2\varepsilon_3}$, where the successive extensions
\begin{equation}\label{E:CDepslon}
\begin{cases}
&F^{\varepsilon_1}
=F[i]
\
\mbox{with}
\
i^2=\varepsilon_1\,,
\\
&F^{\varepsilon_1\varepsilon_2}
=F^{\varepsilon_1}[j]
\
\mbox{with}
\
j^2=\varepsilon_2\,, \mbox{and}
\\
&F^{\varepsilon_1\varepsilon_2\varepsilon_3}
=F^{\varepsilon_1\varepsilon_2}[k]
\
\mbox{with}
\
k^2=\varepsilon_3
\end{cases}
\end{equation}
witness the construction process.
Lemma~\ref{L:SisClosd} shows that $K(V,B)$ is an at most $8$-dimensional alternative algebra over the field $F$.

We have an injective linear map $\psi:V\to F^{\varepsilon_1\varepsilon_2\varepsilon_3}$ which takes $i$ to $i$, $j$ to $j$ and $k$ to $k$. Then, by Corollary~\ref{L:UPFA}, we have the commutative diagram 
$$
\xymatrix{
\mathrm{Alt}[V]
\ar@{-->}[dr]^{\Psi}
\\
V
\ar[r]_-{\psi}
\ar@{^(->}[u]
&
F^{\varepsilon_1\varepsilon_2\varepsilon_3}
}
$$
with an algebra homomorphism $\Psi$ extending $\psi$. Since the image of $V$ generates all of $F^{\varepsilon_1\varepsilon_2\varepsilon_3}$, the homomorphism $\Psi$ is surjective. By \eqref{E:plindrCD}, \eqref{E:CDepslon} and the orthogonality of $\set{i,j,k}$, the kernel of $\Psi$ includes the ideal $J_B$ of $\mathrm{Alt}[V]$ generated by
$$
\set{ab+ba-B(a,b)\cdot 1, (ab)c-c(ba)|a,b,c\in\set{i,j,k}}
$$ 
(cf. Lemma~\ref{L:standardgenerators}). Thus, we have a surjective algebra homomorphism
\begin{equation}\label{E:PsiPrime}
\Psi':K(V,B)=\mathrm{Alt}[V]/J_B\to F^{\varepsilon_1\varepsilon_2\varepsilon_3}\,.
\end{equation} 
This map is a surjective linear transformation from a space of dimension at most $8$ to one of dimension exactly $8$. Therefore, $K(V,B)$ must also have dimension at least $8$, and hence exactly $8$. Thus \eqref{E:PsiPrime} is an isomorphism.
\end{proof}

\begin{corollary}\label{P:quadratic}
The Kingdon algebra $K(V,B)$ over a $3$-dimensional formed space $(V,B)$ is symmetric quadratic.
\end{corollary}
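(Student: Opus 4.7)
The plan is to leverage Theorem~\ref{T:cliffordtocayley} directly. Choose an orthogonal basis $\{i,j,k\}$ of $V$ with $B(i,i)=2\varepsilon_1$, $B(j,j)=2\varepsilon_2$, $B(k,k)=2\varepsilon_3$, and recall that the theorem provides an isomorphism $K(V,B)\cong F^{\varepsilon_1\varepsilon_2\varepsilon_3}$, where the latter is built by three successive applications of the Cayley-Dickson process \eqref{E:GenCa-Di} starting from $F$.

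Next, I would argue inductively that every algebra appearing in the tower \eqref{E:CDepslon} is symmetric quadratic. The base case is Example~\ref{X:TrivQuad}, which certifies that $F$ itself is a symmetric quadratic $F$-algebra under the identity conjugation. For the inductive step, Proposition~\ref{P:ropofC-D}(a) states precisely that if $A$ is symmetric quadratic, then so is its Cayley-Dickson double $A^{\gamma}$, with conjugation $(a,b)\mapsto(a^{*},-b)$ and the trace/norm formulas of \eqref{E:TrNrGamm}. Applying this three times in succession, with $\gamma=\varepsilon_1,\varepsilon_2,\varepsilon_3$, yields that $F^{\varepsilon_1\varepsilon_2\varepsilon_3}$ is symmetric quadratic.

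Transporting the conjugation, trace and norm along the isomorphism $\Psi'$ of \eqref{E:PsiPrime} then equips $K(V,B)$ with an involutary anti-automorphism $*$ such that $(X-a)(X-a^{*})=X^{2}-T(a)X+N(a)$ holds in $F[a][X]$ for every $a\in K(V,B)$, with $T(1)=2$ and $N(1)=1$. This is exactly what is required by the definition of symmetric quadratic in \S\ref{SS:QuadrAlg}.

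There is essentially no obstacle here: the corollary is a transparent consequence of Theorem~\ref{T:cliffordtocayley} combined with Proposition~\ref{P:ropofC-D}(a), with the only mild bookkeeping being to record that the symmetric quadratic structure is preserved under algebra isomorphism (which is immediate, since the conjugation, trace and norm are algebraically defined invariants).
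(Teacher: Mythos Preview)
Your proof is correct and follows exactly the paper's approach: invoke Theorem~\ref{T:cliffordtocayley} and then the fact that Cayley-Dickson algebras are symmetric quadratic. The paper compresses your inductive unpacking of Proposition~\ref{P:ropofC-D}(a) into the single sentence ``Cayley-Dickson algebras are symmetric quadratic,'' but the content is identical.
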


\begin{proof}
Cayley-Dickson algebras are symmetric quadratic.
\end{proof}

\subsection{Norms, forms, and simplicity}\label{SS:KingNorm}

\begin{theorem}\label{T:KingNorm}
In the context of Theorem~\ref{T:cliffordtocayley}, the formula
\begin{align}\label{E:KingNorm}
N(a_0)
&
-\varepsilon_1N(a_1)-\varepsilon_2N(a_2)-\varepsilon_3N(a_3)
\\ \notag
&
+\varepsilon_1\varepsilon_2N(a_4)
+\varepsilon_2\varepsilon_3N(a_5)
+\varepsilon_3\varepsilon_1N(a_6)
-\varepsilon_1\varepsilon_2\varepsilon_3N(a_7)
\end{align}
gives the norm $N(x)$ of an element
\begin{equation}\label{E:KngAlgEl}
x=a_0+a_1i+a_2j+a_3k+a_4ij+a_5jk+a_6ki+a_7\omega
\end{equation}
of $K(V,B)$.
\end{theorem}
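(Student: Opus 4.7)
The plan is to transport the computation, via the isomorphism of Theorem~\ref{T:cliffordtocayley}, into the Cayley-Dickson tower $F \subset F^{\varepsilon_1} \subset F^{\varepsilon_1\varepsilon_2} \subset F^{\varepsilon_1\varepsilon_2\varepsilon_3}$ of \eqref{E:CDepslon}, and to apply the norm recursion $N((a,b)) = N(a) - \gamma N(b)$ from \eqref{E:TrNrGamm} at each of the three stages. At the bottom level, the trivial quadratic $F$ contributes $N(a_0)=a_0^2$. Adjoining $i$ with $\gamma = \varepsilon_1$ gives, for $y_1 = a_0+a_1i \in F^{\varepsilon_1}$, the norm $N(y_1)=a_0^2-\varepsilon_1 a_1^2$. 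Adjoining $j$ with $\gamma=\varepsilon_2$ then gives, for $z_1 = y_1 + y_2 j$ with $y_2 = a_2+a_4 i$, the norm $N(z_1) = a_0^2-\varepsilon_1 a_1^2 - \varepsilon_2 a_2^2 + \varepsilon_1\varepsilon_2 a_4^2$. Adjoining $k$ with $\gamma=\varepsilon_3$ will give $N(x) = N(z_1) - \varepsilon_3 N(z_2)$ once $x$ is written as $z_1 + z_2 k$ with $z_1,z_2 \in F^{\varepsilon_1\varepsilon_2}$.

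The key step, and the main potential pitfall, is that the natural Cayley-Dickson basis at the third stage is $\{1,i,j,ij,k,ik,jk,(ij)k\}$, whereas the basis used in \eqref{E:KngAlgEl} is $\{1,i,j,k,ij,jk,ki,\omega\}$. To reconcile the two, I will write $z_2 = b_0 + b_1 i + b_2 j + b_4 (ij) \in F^{\varepsilon_1\varepsilon_2}$, expand
\[
z_2 k = b_0 k + b_1 (ik) + b_2 (jk) + b_4 ((ij)k),
\]
and then rewrite the summands using the relations available in $K(V,B)$: the orthogonality $B(i,k)=0$ together with the Clifford relator gives $ik = -ki$, and $(ij)k = \omega$ by Definition~\ref{D:FormsOfO}. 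Matching this expression against $a_3 k + a_5 (jk) + a_6 (ki) + a_7 \omega$ forces $b_0 = a_3$, $b_1 = -a_6$, $b_2 = a_5$, $b_4 = a_7$.

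Applying the second-stage formula to this $z_2$ yields
\[
N(z_2) = a_3^2 - \varepsilon_1 a_6^2 - \varepsilon_2 a_5^2 + \varepsilon_1\varepsilon_2 a_7^2,
\]
and combining with $N(z_1)$ through the third-stage recursion gives
\[
N(x) = N(z_1) - \varepsilon_3 N(z_2),
\]
which, after distributing the $-\varepsilon_3$, is exactly the right-hand side of \eqref{E:KingNorm}. The main obstacle is purely bookkeeping, namely the sign flip $b_1 = -a_6$ coming from $ik=-ki$; once this is handled correctly, the norm recursion does all the remaining work, and no computation inside the free alternative algebra is needed beyond what is already provided by Theorem~\ref{T:cliffordtocayley} and Proposition~\ref{P:ropofC-D}(a).
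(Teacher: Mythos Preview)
Your proposal is correct and follows essentially the same route as the paper: both arguments push the computation through the three-stage Cayley-Dickson tower \eqref{E:CDepslon}, apply the norm recursion from \eqref{E:TrNrGamm} at each stage, and handle the basis mismatch at the final stage via $ik=-ki$ and $\omega=(ij)k$, yielding the same second component $z_2=a_3-a_6i+a_5j+a_7ij$. The only cosmetic difference is that the paper keeps the notation $N(a_l)$ for scalars throughout, whereas you unpack these as $a_l^2$.
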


\begin{proof}
The proof involves three applications of the second relation of \eqref{E:TrNrGamm}, once for each of the Cayley-Dickson extensions recorded in \eqref{E:CDepslon}. The formula
\begin{equation}\label{E:FirstNrm}
N(a_0+a_1i)=N(a_0)-\varepsilon_1N(a_1)
\end{equation}
is obtained at the first step. For the second step, we have
$$
a_0+a_1i+a_2j+a_4ij=(a_0+a_1i)+(a_2+a_4i)j
$$
and $N(a_0+a_1i+a_2j+a_4ij)=N\big((a_0+a_1i)+(a_2+a_4i)j\big)\overset{\eqref{E:TrNrGamm}}=$
\begin{align}\notag
&
N(a_0+a_1i)-\varepsilon_2N(a_2+a_4i)
\\ \notag
&
\overset{\eqref{E:FirstNrm}}
=
N(a_0)-\varepsilon_1N(a_1)-\varepsilon_2\big[N(a_2)-\varepsilon_1N(a_4)\big]
\\ \label{E:SecndNrm}
&
=
N(a_0)-\varepsilon_1N(a_1)-\varepsilon_2N(a_2)+\varepsilon_1\varepsilon_2N(a_4)\,.
\end{align}
At the third step, recalling $\omega=(ij)k$ and $ki=-ik$, we have $x\overset{\eqref{E:KngAlgEl}}=$
\begin{align*}
a_0&+a_1i+a_2j+a_3k+a_4ij+a_5jk+a_6ki+a_7(ij)k
\\
&
=(a_0+a_1i+a_2j+a_4ij)+(a_3k+a_5j-a_6i+a_7ij)k
\end{align*}
and $N(x)=N\big((a_0+a_1i+a_2j+a_4ij)+(a_3-a_6i+a_5j+a_7ij)k\big)$
\begin{align*}
\overset{\eqref{E:TrNrGamm}}
=
&
N(a_0+a_1i+a_2j+a_4ij)-\varepsilon_3N(a_3-a_6i+a_5j+a_7ij)
\\
\overset{\eqref{E:SecndNrm}}
=
&
N(a_0)-\varepsilon_1N(a_1)-\varepsilon_2N(a_2)+\varepsilon_1\varepsilon_2N(a_4)
\\
&
\rule{6mm}{0mm}
-\varepsilon_3
\big[N(a_3)-\varepsilon_1N(-a_6)-\varepsilon_2N(a_5)+\varepsilon_1\varepsilon_2N(a_7)\big]
\\
=
&
N(a_0)-\varepsilon_1N(a_1)-\varepsilon_2N(a_2)+\varepsilon_1\varepsilon_2N(a_4)
\\
&
\rule{6mm}{0mm}
-\varepsilon_3N(a_3)+\varepsilon_3\varepsilon_1N(-a_6)+\varepsilon_2\varepsilon_3N(a_5)-\varepsilon_1\varepsilon_2\varepsilon_3N(a_7)\,,
\end{align*}
as required.
\end{proof}

\begin{corollary}\label{C:KingNorm}\phantom{pog}\\
$(\mathrm a)$
The bilinear form $B$ on the formed space $(V,B)$ is degenerate if and only if the norm $N$ on the quadratic algebra $K(V,B)$ is degenerate.
\vskip 2mm
\noindent
$(\mathrm b)$
The $8$-dimensional algebra $K(V,B)$ is simple if and only if the bilinear form $B$ is nondegenerate.
\end{corollary}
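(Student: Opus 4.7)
The plan is to combine Theorem~\ref{T:cliffordtocayley}, which identifies $K(V,B)$ with the Cayley-Dickson algebra $F^{\varepsilon_1\varepsilon_2\varepsilon_3}$, with the explicit norm formula of Theorem~\ref{T:KingNorm}.

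For (a), I would recall from Example~\ref{X:TrivQuad} that the norm on the base field $F$ is $N(a) = a^2$. Substituting into \eqref{E:KingNorm} displays $N$ on $K(V,B)$ as the diagonal quadratic form
\[
a_0^2 - \varepsilon_1 a_1^2 - \varepsilon_2 a_2^2 - \varepsilon_3 a_3^2 + \varepsilon_1\varepsilon_2 a_4^2 + \varepsilon_2\varepsilon_3 a_5^2 + \varepsilon_3\varepsilon_1 a_6^2 - \varepsilon_1\varepsilon_2\varepsilon_3 a_7^2
\]
in the coordinates of \eqref{E:KngAlgEl}. Its polarization is diagonal with the same pattern of nonvanishing coefficients, so $N$ is degenerate iff at least one $\varepsilon_l$ vanishes. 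Since $B$ itself is diagonal in the basis $\set{i,j,k}$ with entries $2\varepsilon_l$ by \eqref{E:BijkVEps}, $B$ is degenerate under precisely the same condition, giving the equivalence.

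For the forward direction of (b), suppose $B$ is degenerate; by (a) some $\varepsilon_l = 0$, and after permuting $\set{i,j,k}$ I may assume $\varepsilon_3 = 0$. Writing $A = F^{\varepsilon_1\varepsilon_2}$, a direct reading of Definition~\ref{D:Cayley} with $\gamma = 0$ gives $(a,b)(0,d) = (0,da)$ and $(0,b)(c,d) = (0,bc^*)$, so $\set{(0,b)\mid b\in A}$ is a proper nonzero two-sided ideal of $A^0 \cong K(V,B)$, precluding simplicity.

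The main obstacle is the converse direction of (b). My plan here is to observe that when all $\varepsilon_l \neq 0$, the isomorphism $K(V,B) \cong F^{\varepsilon_1\varepsilon_2\varepsilon_3}$ presents $K(V,B)$ as the Cayley-Dickson double of the associative symmetric quadratic algebra $F^{\varepsilon_1\varepsilon_2}$ with nonzero parameter $\varepsilon_3$. The classical Cayley-Dickson norm identity then shows that $N$ is multiplicative, and by (a) it is nondegenerate, so $K(V,B)$ is an $8$-dimensional alternative composition algebra. I would conclude by citing the standard result that composition algebras are simple \cite[Ch.~3]{SS82}.
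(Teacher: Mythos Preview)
Your argument for (a) matches the paper's: both read off the equivalence directly from the diagonal norm formula of Theorem~\ref{T:KingNorm}.

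For (b), your route differs from the paper's. The paper invokes a single structural result of Elduque \cite[Prop.~2.1(iv)]{QAA}: for a flexible quadratic algebra, simplicity is equivalent to nondegeneracy of the norm, with the sole exception of the $2$-dimensional algebra $F^2$. Combined with (a) and the observation that $\dim K(V,B)=8$ rules out the exceptional case, both implications of (b) follow at once. You instead treat the two directions separately: an explicit ideal $\{0\}\oplus A$ in $A^0$ when some $\varepsilon_l=0$, and an appeal to the simplicity of composition algebras when all $\varepsilon_l\ne 0$. Your explicit ideal is a nice concrete witness that the paper does not provide. On the other hand, your final appeal needs a small sharpening: composition algebras are not all simple (the split $2$-dimensional algebra $F\oplus F$ is a counterexample, exactly the exception Elduque isolates), so you should cite the result in the form ``composition algebras of dimension $>2$ are simple'' or note explicitly that the $8$-dimensional case excludes $F\oplus F$. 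With that adjustment your argument is complete.
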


\begin{proof}
The proof of (a) is a direct consequence of the theorem. Then, if a flexible quadratic $F$-algebra is simple, its norm is nondegenerate \cite[Prop.~2.1(iv)]{QAA}. Conversely, if the norm of a quadratic $F$-algebra is nondegenerate, then the algebra is either simple or isomorphic to the algebra $F^2$ of Example~\ref{X:LorenzQA} \cite[Prop.~2.1(iv)]{QAA}. Of course, the latter option does not involve algebras of dimension $8$, and (b) follows since the alternative algebra $K(V,B)$ is flexible (Corollary~\ref{C:AltFlexi}).
\end{proof}

\subsection{Quantizing the alternative exterior algebra}\label{SS:QuAlExAl}

In Table~\ref{Tb:Mlt8-dim}, we present the multiplication table for an $8$-dimensional Kingdon algebra $K(V,B)$ in terms of its standard basis $\set{1,i,j,k,ij,jk,ki,\omega=(ij)k}$.

\renewcommand{\arraystretch}{1.5}
\begin{table}[hbt]
\scalebox{0.8}
{
\begin{tabular}{|c|ccc|ccc|c|}
\hline
1 & $i$ & $j$ & $k$ & $ij$ & $jk$ & $ki$ & $\omega$ \\
\hline
$i$ & $Q(i)$ & $ij$ & $-ki$ & $Q(i)j$ & $-\omega$ & $-Q(i)k$ & $-Q(i)jk$\\
$j$ & $-ij$ & $Q(j)$ & $jk$ & $-Q(j)i$ & $Q(j)k$ & $-\omega$ & $-Q(j)ki$\\
$k$ & $ki$ & $-jk$ & $Q(k)$ & $-\omega$ & $-Q(k)j$ & $Q(k)i$ & $-Q(k)ij$\\
\hline
$ij$ & $-Q(i)j$ & $Q(j)i$ & $\omega$ & $-Q(i)Q(j)$ & $Q(j)ki$ & $-Q(i)jk$ & $-Q(i)Q(j)k$\\ 
$jk$ & $\omega$ & $-Q(j)k$ & $Q(k)j$ & $-Q(j)ki$ & $-Q(j)Q(k)$ & $Q(k)ij$ & $-Q(j)Q(k)i$\\
$ki$ & $Q(i)k$ & $\omega$ & $-Q(k)i$ & $Q(i)jk$ & $-Q(k)ij$ & $-Q(k)Q(i)$ & $-Q(k)Q(i)j$\\
\hline
$\omega$ & $Q(i)jk$ & $Q(j)ki$ & $Q(k)ij$ & $Q(i)Q(j)k$ & $Q(j)Q(k)i$ & $Q(k)Q(i)j$ & $Q(i)Q(j)Q(k)$\\
\hline
\end{tabular}
}
\vskip 3mm
\caption{The multiplication table of an eight-dimensional Kingdon algebra.}\label{Tb:Mlt8-dim}
\end{table}
\renewcommand{\arraystretch}{1}

Comparison with Table~\ref{Tb:altextalg} affords a different perspective on an eight-dimensional Kingdon algebra $K(V,B)$, namely as a \emph{deformation} or \emph{quantization} of the alternative exterior algebra $\Lambda'(\mathbb R^3)$ by introduction of the parameters $Q(i),Q(j),Q(k)$ along the three chosen perpendicular axes of the space $\mathbb R^3$.

\section{Real Kingdon algebras}

\subsection{Orthogonal bases}

We seek to classify all real Kingdon algebras on $3$-dimensional formed spaces, and examine various aspects of their structure. Each bilinear form $B$ on a three-dimensional real vector space $V$ has an orthogonal basis of elements that square to $0$ (i.e., are \emph{isotropic}) or that square to $\pm 1$. This allows us to give an alternative labelling for real $8$-dimensional Kingdon algebras, which may also be applied to real Kingdon algebras of other dimensions.

\begin{definition}\label{D:Kb0b1b-1}
Consider an orthogonal basis for a real formed space $(V,B)$ where each basis vector $e$ satisfies $B(e,e)\in\set{0,\pm2}$. Set $K(b_0,b_1,b_{-1})=K(V,B)$, where for each ``length'' $l\in\set{0,\pm1}$, the number of basis elements with $B(e,e)=2l$ is $b_l$.
\end{definition}

The notation of Definition~\ref{D:Kb0b1b-1} recalls the Clifford algebra notation of \eqref{E:Cb0b1b-1}. Indeed, $K(b_0,b_1,b_{-1})=\mathrm{Cl}[b_0,b_1,b_{-1}]$ if $b_0+b_1+b_{-1}<3$ (\S\ref{SS:LowDimEx}).

\subsection{Centers, nuclei, and commutants}\label{SS:CeNucCom}

\begin{definition}\label{D:ComNuCen}
Let $(M,\cdot)$ be a magma, where a product $x\cdot y$ may also be denoted by the simple juxtaposition $xy$ that binds more strongly than the explicit multiplication. Thus $xy\cdot z=(x\cdot y)\cdot z$, for example. 
\begin{enumerate}
\item[$(\mathrm a)$]
The subset
$$
C=\set{c|\forall\ x\in M,\ x\cdot c=c\cdot x}
$$
of $M$ is the \emph{commutant} of $(M,\cdot)$.
\item[$(\mathrm b)$]
The subset $N=$
$$
\set{n|\forall\ x,y\in M,\ n\cdot xy=nx\cdot y\,,\ x\cdot ny=xn\cdot y\,,\ x\cdot yn=xy\cdot n}
$$
of $M$ is the \emph{nucleus} of $(M,\cdot)$.
\item[$(\mathrm c)$]
The \emph{center} of $(M,\cdot)$ is the intersection $Z=C\cap N$.
\end{enumerate}
\end{definition}

For a three-dimensional formed space $(V,B)$ spanned by the set $\set{i,j,k}$, whose elements are described as \emph{generators}, we will take 
\begin{equation}\label{E:GenElemx}
x=a_0+a_1 i+a_2 j+a_3 k+a_4 ij+a_5 jk+ a_6 ki+a_7\omega
\end{equation} 
as a general element of $K(V,B)$, written in terms of the standard basis used in \S\ref{SS:QuAlExAl}.

\subsubsection{Simple algebras}

By Theorem~\ref{T:constructingoct} and Corollary~\ref{C:KingNorm}(b), the (split) octonions are simple. Their centers, nuclei and commutants all coincide with the field $\mathbb{R}$. Indeed, for any non-real element $x_1$ of the algebra, one may find further elements $x_2,x_3$ such that $x_1,x_2,x_3$ are mutually $\mathbb R$-algebraically independent. Then $x_1$ does not lie in the commutant, since it does not commute with $x_2$. Furthermore, it does not lie in the nucleus or the center, since the associator $(x_1,x_2,x_3)$ is nonzero.

\subsubsection{Commutants of non-simple algebras}

In the remaining cases, the situation is more complicated.

\begin{lemma}\label{L:NoSimCom}
Let $K(V,B)$ be a non-simple Kingdon algebra.
\begin{enumerate}
\item[$(\mathrm a)$]
If $B=0$, so that $K(V,B)=\Lambda'(\mathbb{R}^3)$, then the commutant is the span of $\set{1,\omega}$.
\item[$(\mathrm b)$]
If $B\ne0$, then the commutant is $\mathbb R$.
\end{enumerate}
\end{lemma}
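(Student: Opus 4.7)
The plan is to take a general element $x$ in the form \eqref{E:GenElemx}, first deduce necessary conditions for $x$ to lie in the commutant by requiring that $[x,i]$, $[x,j]$, and $[x,k]$ all vanish, and then in each case to check directly that the resulting candidate element really does commute with every basis vector of $K(V,B)$. This two-step approach is needed because, in a nonassociative algebra, commuting with a generating set need not automatically imply commuting with the whole algebra.

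For the necessary conditions, I would read the three commutators $[x,i]=xi-ix$, $[x,j]$, and $[x,k]$ directly off of Table~\ref{Tb:Mlt8-dim}. A direct expansion is expected to give
\[
[x,i]=-2a_2\,ij+2a_3\,ki+2a_5\,\omega-2a_4 Q(i)\,j+2a_6 Q(i)\,k+2a_7 Q(i)\,jk,
\]
with $[x,j]$ and $[x,k]$ following by the cyclic symmetry $(i\,j\,k)$ of Table~\ref{Tb:Mlt8-dim}. The structural feature to exploit is that in each commutator, the coefficients of $ij$, $jk$, $ki$, and $\omega$ (in the slots that vary cyclically) appear \emph{without} any $Q$-factor, while all other terms carry $Q(i)$, $Q(j)$, or $Q(k)$. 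Vanishing of the three commutators therefore forces $a_1=a_2=a_3=a_4=a_5=a_6=0$ unconditionally, reducing $x$ to $a_0+a_7\omega$, subject only to the residual relations $a_7 Q(i)=a_7 Q(j)=a_7 Q(k)=0$.

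For part~(a), $B=0$ makes these three relations vacuous, so the candidate set is the full span of $\{1,\omega\}$; the zero $\omega$-row and $\omega$-column of Table~\ref{Tb:altextalg} then confirm that $\omega$ annihilates every basis element on both sides, so each $a_0+a_7\omega$ does commute with everything. For part~(b), $B\ne 0$ guarantees that at least one of $Q(i), Q(j), Q(k)$ is nonzero, forcing $a_7=0$ and $x=a_0\in\mathbb{R}$, which is trivially central.

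The main (mild) obstacle I anticipate is the sign-bookkeeping in expanding the three commutators from Table~\ref{Tb:Mlt8-dim}; fortunately, the $(i\,j\,k)$-symmetry of the multiplication reduces the work to essentially one computation, with the other two obtained by cyclic permutation.
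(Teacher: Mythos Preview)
Your proposal is correct and follows essentially the same route as the paper: both arguments reduce membership in the commutant to commutation with the generators $i,j,k$, and then handle the $\omega$-coefficient separately depending on whether $B$ vanishes. The paper phrases the necessary-condition step qualitatively (each basis element $v$ with $0<l<7$ anticommutes with at least one generator, and for $l=7$ one finds a $w$ with $w\omega=-\omega w\ne 0$), whereas you carry out the same step by explicitly expanding $[x,i]$, $[x,j]$, $[x,k]$ from Table~\ref{Tb:Mlt8-dim}; your version is slightly more careful in that it makes transparent why no cancellation can occur among the contributions of different basis elements, and in flagging the need for the separate sufficiency check in a nonassociative setting.
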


\begin{proof}
(a)
In the $\Lambda'(\mathbb{R}^3)$ case, $\omega x=x \omega$ for all elements $x$ of $K(V,B)$, so $\omega$ is in the commutant. 
Now suppose that in \eqref{E:GenElemx}, a coefficient $a_l$ for $0<l<7$ is nonzero. Suppose that $a_lv$ is the corresponding summand in \eqref{E:GenElemx}. Then $v$ will anticommute with at least one of the vectorial basis elements $i$, $j$ and $k$: with exactly one if $v$ is the product of two vectorial basis elements, and exactly two if it is itself a vectorial element. Since $v$ does not commute with a basis element, neither does $x$.
\vskip 2mm
\noindent
(b)
Suppose $B\ne0$. Suppose that in \eqref{E:GenElemx}, we have $a_l\ne0$ for some $0<l\le7$, where $a_l$ is the coefficient of basis element $v$. Then $x$ does not commute with some vectorial basis element of $K(V,B)$ for the same reason as in (a), save in the case of $l=7$. When $a_7\ne0$, there exists a standard basis element $w\neq 1$ such that $w\omega\ne0$, and in that case $w\omega=-\omega w$, so $x$ is not in the commutant.
\end{proof}

\subsubsection{Nuclei and centers}

\begin{lemma}
The nucleus of the alternative exterior algebra $\Lambda'(\mathbb R^3)$ has basis $\set{1,ij,jk,ki,\omega}$.
\end{lemma}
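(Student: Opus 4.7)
The claim is that the nucleus $N$ of $\Lambda'(\mathbb R^3)$ coincides with the subspace $E=\mathrm{Span}\set{1,ij,jk,ki,\omega}$. The plan is to verify both inclusions directly from Table~\ref{Tb:altextalg}, using throughout that $N$ is a linear subspace (as the intersection of kernels of the associator regarded as a trilinear map), so it suffices to test basis vectors.

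For the forward inclusion $E\subseteq N$, the identity $1$ is nuclear trivially, and the bottom row and rightmost column of Table~\ref{Tb:altextalg} show that $\omega v=v\omega=0$ for every basis element $v$, so every associator involving $\omega$ has all three summands equal to zero. For each $u\in\set{ij,jk,ki}$, the key observation is that multiplication by $u$ annihilates every basis element except the one vectorial generator not appearing in $u$, in which case $uv$ or $vu$ lies in $\set{\pm\omega}$; and products of $u$ with any element of $\set{ij,jk,ki,\omega}$ vanish. Since $\omega$ in turn annihilates every basis element, each of $(u,v,w)$, $(v,u,w)$, $(v,w,u)$ either has both summands immediately zero or has any $\pm\omega$ that appears absorbed to zero by the next multiplication. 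A short case analysis, stratified by whether $v,w$ are scalar, vectorial generators, $2$-chains, or $\omega$, completes the check.

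For the reverse inclusion $N\subseteq E$, write a general element
$$
x=a_0+a_1 i+a_2 j+a_3 k+a_4 ij+a_5 jk+a_6 ki+a_7\omega
$$
as in \eqref{E:GenElemx}, and compute the single associator $(x,i,j)$. By trilinearity and the forward inclusion, every contribution vanishes except possibly $a_3(k,i,j)$: alternativity and flexibility kill $(i,i,j)$ and $(j,i,j)$, while $(e,i,j)=0$ for every $e\in E$ by what has just been established. Reading Table~\ref{Tb:altextalg} gives $(ij)k=\omega$ and $k(ij)=-\omega$, so $(k,i,j)=2\omega$ and hence $(x,i,j)=2a_3\omega$. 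The hypothesis $x\in N$ therefore forces $a_3=0$; the cyclic analogues $(x,j,k)=2a_1\omega$ and $(x,k,i)=2a_2\omega$ similarly force $a_1=a_2=0$. Combined with the prior containment, this identifies $N$ with $E$ exactly.

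The only real obstacle is the bookkeeping in the forward inclusion: one must enumerate the triples in which a vectorial generator stands next to $u\in\set{ij,jk,ki}$, where $uv$ or $vu$ can equal $\pm\omega$ rather than $0$, and verify in each such case that the remaining multiplication collapses the $\omega$ factor back to zero. This is entirely mechanical once the table is in hand, but care is needed so that no case is dropped.
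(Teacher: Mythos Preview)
Your proof is correct and follows essentially the same strategy as the paper. Both directions match: the reverse inclusion in the paper also computes, for a general element $x$, the discrepancy between $(xj)k$ and $x(jk)$ (and cyclic variants), which is precisely your associator $(x,j,k)=2a_1\omega$. For the forward inclusion, the paper compresses your case analysis into a single counting observation: any associator involving $a\in\set{ij,jk,ki}$ and two further nontrivial basis elements contains at least four generator factors drawn from $\set{i,j,k}$, so some generator repeats and the product vanishes. This is a bit cleaner than the stratified enumeration you outline, but the content is the same. One small slip: when you compute $(k,i,j)$ you quote $(ij)k$ from the table, but the relevant entry is $(ki)j$; both equal $\omega$, so the conclusion is unaffected.
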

\begin{proof}
From Table~\ref{Tb:altextalg}, it is clear that $\omega$ is nuclear. If $a\in \set{ij,jk,ki}$ and $y,z\in\set{i,j,k,ij,jk,ki,\omega}$, then 
\begin{equation}\label{E:aNucleus}
a(yz)=(ay)z=0\,,\
y(az)=(ya)z=0\,,\mbox{and }
(yz)a=y(za)=0\,, 
\end{equation}
since there are at least four copies of generators in each product, and only three unique generators. Linear extension of \eqref{E:aNucleus} shows that the span of $\set{ij,jk,ki}\cup\set{1,\omega}$ is a subset of the nucleus.

Now consider a general element \eqref{E:GenElemx} of $\Lambda'(\mathbb R^3)$. If the coefficient $a_1$ is nonzero, we have $x(jk)=a_0jk-a_1\omega$, while $(xj)k=a_0jk+a_1\omega$. Similar reasoning holds if $a_2$ or $a_3$ is nonzero.
\end{proof}

\begin{corollary}
The center of the alternative exterior algebra $\Lambda'(\mathbb R^3)$ has basis $\set{1,\omega}$.
\end{corollary}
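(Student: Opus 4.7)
The plan is to recognize that this corollary is immediate from two results already established: the preceding lemma computing the nucleus of $\Lambda'(\mathbb R^3)$, and Lemma~\ref{L:NoSimCom}(a) computing the commutant. By Definition~\ref{D:ComNuCen}(c), the center $Z$ is the intersection $C \cap N$ of the commutant $C$ and the nucleus $N$.

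Concretely, I would first recall from Lemma~\ref{L:NoSimCom}(a) that $C = \mathrm{Span}\set{1,\omega}$, and from the preceding lemma that $N = \mathrm{Span}\set{1,ij,jk,ki,\omega}$. Then I would simply observe that $\set{1,\omega} \subseteq \set{1,ij,jk,ki,\omega}$, so $C \subseteq N$, whence $Z = C \cap N = C = \mathrm{Span}\set{1,\omega}$.

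There is no real obstacle here; the only thing to verify is that the two bases are compatible in the sense that $\set{1,\omega}$ is a sub-basis of the nuclear basis, which is visible by inspection. Consequently the statement follows at once and the proof is a one-line deduction from the two prior results.
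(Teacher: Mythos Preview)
Your proposal is correct and matches the paper's own proof, which simply cites Lemma~\ref{L:NoSimCom}(a) for the commutant and relies implicitly on the preceding lemma for the nucleus. You have spelled out the intersection $Z=C\cap N=C$ a bit more explicitly, but the argument is identical.
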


\begin{proof}
By Lemma~\ref{L:NoSimCom}(a), the commutant is spanned by $\set{1,\omega}$.
\end{proof}

\begin{lemma}
The algebras $K(1,a_1,a_{-1})$ have $\mathbb{R}$ as their nuclei.
\end{lemma}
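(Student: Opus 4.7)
The plan is to show that a general element
$x=a_0+a_1i+a_2j+a_3k+a_4ij+a_5jk+a_6ki+a_7\omega$
of $K(1,a_1,a_{-1})$ lies in the nucleus only if $a_l=0$ for every $l\ge 1$, by computing three carefully chosen associators and reading off the coefficients in the standard basis. Throughout I will choose the orthogonal basis $\set{i,j,k}$ so that $i$ is the isotropic vector, i.e.\ $Q(i)=0$, while $Q(j),Q(k)\in\set{\pm 1}$ are nonzero. The inclusion $\mathbb R\subseteq N$ is automatic.

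First I would compute $(x,j,k)=(xj)k-x(jk)$ entirely from Table~\ref{Tb:Mlt8-dim}, without yet specialising $Q(i)$. Collecting terms, the coefficients of the standard basis elements that survive are (up to the factor $2$) those of $\omega$, $ki$, $ij$ and $i$, giving constraints $a_1=0$, $a_4Q(j)=0$, $a_6Q(k)=0$ and $a_7Q(j)Q(k)=0$. Because $Q(j),Q(k)\ne 0$, this already forces $a_1=a_4=a_6=a_7=0$, so that $x$ reduces to $a_0+a_2j+a_3k+a_5jk$, an element of the subalgebra generated by $\set{j,k}$. That subalgebra is a Clifford algebra (Proposition~\ref{P:4cliff}) and hence associative, so associators built only from $j,k$ yield nothing further; the isotropic generator $i$ has to enter the next test.

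Next I would compute $(x,i,j)$ for the reduced element. Here the $Q(i)=0$ entries of Table~\ref{Tb:Mlt8-dim} collapse many terms, and the surviving coordinates are a multiple of $a_3$ along $\omega$ and a multiple of $a_5Q(j)$ along $ki$; vanishing then forces $a_3=a_5=0$. With $x=a_0+a_2j$, a final computation of $(x,i,k)$ produces a multiple of $a_2\omega$ (again because $Q(i)=0$ kills the other would-be terms), forcing $a_2=0$. Thus $x\in\mathbb R$, completing the proof.

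The work is purely computational bookkeeping inside Table~\ref{Tb:Mlt8-dim}, and the main (mild) obstacle is choosing the associators so that $(x,j,k)$ first strips away the ``non-$jk$-subalgebra'' coordinates, and the two associators $(x,i,j)$ and $(x,i,k)$ then exploit the remaining lever afforded by $Q(j)\ne 0$ and $Q(k)\ne 0$. The hypothesis $b_0=1$ is essential: a second isotropic basis vector would leave extra $Q$-factors equal to zero in the cancellations above, so the constraints on $a_2,a_3,a_5$ could fail and the nucleus would genuinely enlarge, as is the case for $\Lambda'(\mathbb R^3)=K(3,0,0)$.
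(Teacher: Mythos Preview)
Your proposal is correct and follows essentially the same computational approach as the paper: both show that a general element $x$ is nuclear only if every $a_l$ with $l\ge 1$ vanishes, by exhibiting specific associators read off from Table~\ref{Tb:Mlt8-dim}. Your organization is slightly tidier—three associators $(x,j,k)$, $(x,i,j)$, $(x,i,k)$ with $x$ progressively reduced, using the observation that $\langle j,k\rangle$ is associative—whereas the paper treats the coefficients one at a time with four separate comparisons (including one of the form $(jk,k,x)$); but the underlying idea is identical.
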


\begin{proof}
Suppose that $i$ is the unique $B$-orthogonal basis element such that $i^2=0$, while $j$ and $k$ are the remaining two orthogonal basis elements. Consider a general element \eqref{E:GenElemx}.
\begin{itemize}
\item
If any of $a_1,a_4,a_5,a_7$ are nonzero, then
\begin{align*}
x(jk)&=a_0jk-a_1\omega+a_2Q(j)k-a_3Q(k)j+a_4Q(j)ki
\\
&
\rule{30mm}{0mm}
-a_5Q(k)ij-a_6Q(j)Q(k)+a_7Q(j)Q(k)i\,,
\end{align*}
while
\begin{align*}
(xj)k&=a_0jk+a_1\omega+a_2Q(j)k-a_3Q(k)j-a_4Q(j)ki
\\
&
\rule{30mm}{0mm}
+a_5Q(k)ij-a_6Q(j)Q(k)-a_7Q(j)Q(k)i\,,
\end{align*}
and $x$ is not in the nucleus.
\item
If $a_2\neq 0$, then $x(ik)=-a_0ki+a_2\omega-a_3Q(k)i-a_5Q(k)ij$, while $(xi)k=-a_0ki-a_2\omega-a_3Q(k)i +a_5Q(k)ij$. Thus $x$ is not in the nucleus. 
\item
Similar reasoning holds for $a_3\neq 0$, except we use $i$ and $j$ instead of $i$ and $k$. 
\item
If $a_6\neq 0$, then $((jk)k)x=Q(k)jx=$
\begin{align*}
Q(k)\big[a_0j&-a_1ij+a_2Q(j)+a_3jk
\\
&
-a_4Q(j)i+a_5Q(j)k-a_6\omega-a_7Q(j)ki\big]\,, 
\end{align*}
while $(jk)(kx)=$
\begin{align*}
Q(k)\big[a_0&j-a_1ij+a_2Q(j)+a_3jk
\\
&
+a_4Q(j)i-a_5Q(j)k+a_6\omega+a_7Q(j)ki\big]\,.
\end{align*} 
Since $a_6\neq -a_6$, and both $Q(j)$ and $Q(k)$ are nonzero, then $x$ is not in the nucleus. 
\end{itemize}
Thus $x$ is only in the nucleus if $a_1=a_2=a_3=a_4=a_5=a_6=a_7=0$, i.e., if $x\in\mathbb{R}$.
\end{proof}

\begin{corollary}
The algebras $K(1,a_1,a_{-1})$ have $\mathbb{R}$ as their centers.
\end{corollary}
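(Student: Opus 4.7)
The plan is to observe that the result follows almost immediately from the preceding lemma together with the general fact that $\mathbb{R}\cdot 1$ is always contained in the center of a unital $\mathbb{R}$-algebra. Recall from Definition~\ref{D:ComNuCen}(c) that the center is $Z = C \cap N$, so it suffices to show one containment in each direction.

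For the containment $Z \subseteq \mathbb{R}$, I would invoke the preceding lemma, which asserts that the nucleus $N$ of $K(1,a_1,a_{-1})$ equals $\mathbb{R}$. Since $Z = C \cap N \subseteq N$, we immediately get $Z \subseteq \mathbb{R}$. For the reverse containment $\mathbb{R} \subseteq Z$, I would note that scalar multiples of $1$ commute and associate with every element of any unital algebra, so $\mathbb{R} \subseteq C$ and $\mathbb{R} \subseteq N$, hence $\mathbb{R} \subseteq Z$. Combining these gives $Z = \mathbb{R}$.

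Since the argument is essentially a one-line consequence of the previous lemma, there is no real obstacle; the proof can simply read: ``By the preceding lemma, the nucleus is $\mathbb{R}$, and the center, being contained in the nucleus, must also equal $\mathbb{R}$.'' Alternatively, one may invoke Lemma~\ref{L:NoSimCom}(b) instead, noting that for $K(1,a_1,a_{-1})$ with $a_1+a_{-1}=2$ the form $B$ is nonzero, so the commutant equals $\mathbb{R}$, and the same intersection argument applies.
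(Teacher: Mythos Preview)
Your proof is correct. The paper's own proof uses your alternative route via Lemma~\ref{L:NoSimCom}(b) (the commutant is $\mathbb{R}$) rather than the preceding lemma on the nucleus, but since $Z=C\cap N$ either containment suffices, and you explicitly note both options.
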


\begin{proof}
By Lemma~\ref{L:NoSimCom}(b), the commutant is $\mathbb{R}$.
\end{proof}

\begin{lemma}
The algebras $K(2,a_1,a_{-1})$, where $i^2=j^2=0$, have nuclei with basis $\set{1,ij,\omega}$.
\end{lemma}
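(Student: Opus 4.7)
The plan is to prove the two inclusions separately, using alternativity to streamline each.

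First I would verify that $\set{1,ij,\omega}$ is contained in the nucleus. Because $K(V,B)$ is alternative, the associator is totally alternating by Lemma~\ref{L:alternative}(d), so it suffices to verify that $(a,y,z)=0$ for $a\in\set{ij,\omega}$ and every pair $y,z$ of standard basis vectors. With $Q(i)=Q(j)=0$ and only $Q(k)\ne 0$, reading Table~\ref{Tb:Mlt8-dim} shows that $ij$ and $\omega$ annihilate nearly every basis element: the only surviving products are $(ij)k=\omega$ and $\omega k=Q(k)\,ij$, together with their reverses on the other side. Running down the short list of nontrivial triples for $(ij,y,z)$ and $(\omega,y,z)$ gives zero each time, because whatever surviving subproduct could otherwise appear carries a factor of $Q(i)$ or $Q(j)$.

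For the reverse inclusion, write a general nuclear element as $x=a_0+a_1 i+a_2 j+a_3 k+a_4 ij+a_5 jk+a_6 ki+a_7\omega$ and subtract the already-known nuclear piece to obtain $y=a_1 i+a_2 j+a_3 k+a_5 jk+a_6 ki$, which is then necessarily nuclear as well. A direct term-by-term computation from Table~\ref{Tb:Mlt8-dim}, using the orthogonality of $\set{i,j,k}$, yields
\begin{align*}
(y,i,k)&=2a_5 Q(k)\,ij-2a_2\,\omega,\\
(y,j,k)&=2a_6 Q(k)\,ij+2a_1\,\omega,\\
(y,i,j)&=2a_3\,\omega.
\end{align*}
Since $Q(k)\ne 0$ and $\set{ij,\omega}$ is linearly independent in $K(V,B)$, the vanishing of these three associators forces first $a_2=a_5=0$, then $a_1=a_6=0$, and finally $a_3=0$; hence $y=0$ and $x$ lies in the span of $\set{1,ij,\omega}$.

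The main obstacle is organizational rather than conceptual: one must keep a clear ledger of which entries of Table~\ref{Tb:Mlt8-dim} are killed by the vanishing of $Q(i)$ and $Q(j)$ and which survive through $Q(k)$. Exploiting alternativity to reduce the nuclear check to a single one-sided associator condition, and subtracting off the known nuclear summands before analysing the remainder, keeps the casework to a manageable size.
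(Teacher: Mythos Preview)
Your proof is correct and follows essentially the same route as the paper: both establish the inclusion $\mathrm{span}\{1,ij,\omega\}\subseteq N$ by noting that almost all products with $ij$ or $\omega$ vanish when $Q(i)=Q(j)=0$, and both eliminate the remaining coefficients $a_1,a_2,a_3,a_5,a_6$ by computing associators with the pairs $(i,k)$, $(j,k)$, $(i,j)$. Your version is slightly more streamlined in that you invoke alternativity explicitly to reduce the nuclear condition to a single associator position and subtract off the known nuclear summands before computing, whereas the paper carries the full element $x$ through and compares $(jk)x$ with $j(kx)$, etc.; but these are the same associators up to the alternating sign, so the two arguments are equivalent.
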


\begin{proof}
The basis elements $ij$ and $\omega$ associate with each pair $a,b$ of elements of $\set{i,j,k,ij,jk,ik,\omega}$. If one of $a$ and $b$ contains a copy of $i$ or $j$, we have 
$$
(ab)(ij)=a(b(ij)=0\,, (a(ij))b=a((ij)b)=0\,, (ij)(ab)=((ij)a)b=0\,,
$$
and similarly for $\omega$ in place of $ij$. Therefore, the only interesting case is when we have distinct $a$ and $b$ chosen from among the basis elements not containing a copy of $i$ or $j$. But this is impossible, since the only such basis element is $k$. Thus, $\mathbb{R}+ij\mathbb{R}+\omega\mathbb{R}$ is contained in the nucleus.

Now, consider a general element \eqref{E:GenElemx}.
\begin{itemize}
\item
If $a_1\neq 0$ or $a_6\neq 0$, then 
\begin{align*}
(jk)x
&
=a_0jk+a_1\omega+a_3Q(k)j-a_6Q(k)ij\,,\mbox{ while}
\\
j(kx)
&
=a_0jk-a_1\omega+a_3Q(k)j+a_6Q(k)ij\,,
\end{align*}
so $x$ is not in the nucleus.
\item
Similarly, if $a_2\neq 0$ or $a_5\neq 0$, then
\begin{align*}
(ik)x
&
=-a_0ki-a_2\omega+a_3Q(k)i+a_5Q(k)ij\,,\mbox{ while}
\\
i(kx)
&
=-a_0ki+a_2\omega+a_3Q(k)i-a_5Q(k)ij\,,
\end{align*}
so $x$ is not in the nucleus.
\item
If $a_3\neq 0$, then $(ij)x=a_0ij+a_3\omega$, while $i(jx)=a_0ij-a_3\omega$, so $x$ is not in the nucleus.
\end{itemize}
Thus $x$ is only in the nucleus if $a_1=a_2=a_3=a_5=a_6=0$, proving the desired result.
\end{proof}

\begin{corollary}
The algebras $K(2,a_1,a_{-1})$ have $\mathbb{R}$ as their center.
\end{corollary}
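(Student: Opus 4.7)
The plan is to combine the preceding lemma with Lemma~\ref{L:NoSimCom}(b) via the definition of center. By Definition~\ref{D:ComNuCen}(c), the center $Z$ of $K(2,a_1,a_{-1})$ is the intersection $C \cap N$ of its commutant and its nucleus. The preceding lemma has already identified $N$ as the span of $\set{1,ij,\omega}$, so in particular $\mathbb{R} \subseteq N$.

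Next, I would check that Lemma~\ref{L:NoSimCom}(b) applies. The notation $K(2,a_1,a_{-1})$ specifies a Kingdon algebra over a three-dimensional formed space with two isotropic basis vectors, so $a_1 + a_{-1} = 1$ by Definition~\ref{D:Kb0b1b-1}. Consequently there is at least one basis element $e$ with $B(e,e) = \pm 2 \ne 0$, whence $B \ne 0$. Lemma~\ref{L:NoSimCom}(b) then gives $C = \mathbb{R}$.

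Finally I would conclude by taking the intersection: since $\mathbb{R} \subseteq N$ and $C = \mathbb{R}$, we obtain
\[
Z = C \cap N = \mathbb{R} \cap N = \mathbb{R}\,,
\]
as claimed. There is no real obstacle here; the corollary is an immediate consequence of the two preceding results, and the verification that the hypotheses of Lemma~\ref{L:NoSimCom}(b) are met is a one-line dimension count.
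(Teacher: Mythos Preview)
Your argument is correct and follows the same approach as the paper: intersect the nucleus (from the preceding lemma) with the commutant $\mathbb{R}$ furnished by Lemma~\ref{L:NoSimCom}(b). Your added verification that $B\ne 0$ (so that Lemma~\ref{L:NoSimCom}(b) genuinely applies) is a welcome bit of care that the paper leaves implicit.
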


\begin{proof}
By Lemma~\ref{L:NoSimCom}(b), the commutant is $\mathbb{R}$.
\end{proof}

\subsection{The full classification}\label{SS:FC}

Consideration of the nuclei shows that there is no isomorphism between algebras of the form $K(2,b_1,b_{-1})$ and those of the form $K(1,b_1,b_{-1})$, or between algebras of either of these forms and the alternative exterior algebra. We now show when we do have isomorphisms within each of these classes.

\begin{lemma}
$K(2,1,0)\not\cong K(2,0,1)$
\end{lemma}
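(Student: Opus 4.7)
The plan is to distinguish the two algebras by the existence of a nontrivial idempotent, an invariant preserved under any algebra isomorphism. The earlier structural invariants (nucleus, commutant, center) are unhelpful here, since both algebras belong to the class $K(2,b_1,b_{-1})$ and so share all three; the discriminator must come from the sign of $Q(k)$.

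In $K(2,1,0)$, where $Q(k)=1$ and hence $k^2 = 1$ by Table~\ref{Tb:Mlt8-dim}, the element $e=\tfrac{1}{2}(1+k)$ is a nontrivial idempotent, since $(1+k)^2 = 2(1+k)$. To rule out any nontrivial idempotent in $K(2,0,1)$, I would invoke Corollary~\ref{P:quadratic}: the algebra is symmetric quadratic, so every element $f$ satisfies $f^2 - T(f) f + N(f) = 0$. If $f^2 = f$, then $(T(f)-1)f = N(f)$; whenever $T(f)\ne 1$ this puts $f \in \mathbb{R}$, hence $f \in \set{0,1}$. So any nontrivial idempotent $f$ must satisfy $T(f)=1$ and $N(f)=0$. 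Writing $f = a_0 + a_1 i + a_2 j + a_3 k + a_4 ij + a_5 jk + a_6 ki + a_7 \omega$, the Clifford conjugation $\kappa$ of Corollary~\ref{C:UnvPrKVB} negates each of the seven imaginary standard basis elements (immediate on $i,j,k$; from orthogonality on $ij,jk,ki$; and from the palindromic relator together with orthogonality on $\omega$), so $T(f) = f + \kappa(f) = 2 a_0$. Theorem~\ref{T:KingNorm} with $\varepsilon_1 = \varepsilon_2 = 0$ and $\varepsilon_3 = -1$ collapses the norm formula to $N(f) = a_0^2 + a_3^2$. Then $T(f) = 1$ forces $a_0 = \tfrac{1}{2}$, whence $N(f) \ge \tfrac{1}{4} > 0$, contradicting $N(f)=0$.

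The only non-cosmetic step is the verification that $\kappa$ negates every imaginary standard basis element, so that the simple formula $T(f) = 2 a_0$ is available; this follows directly from the defining action of $\kappa$ on $V$ together with the orthogonality of $\set{i,j,k}$ and the palindromic relator of $K(V,B)$. Given this, the witness $e = \tfrac{1}{2}(1+k)$ in $K(2,1,0)$ and the contradiction in $K(2,0,1)$ together yield $K(2,1,0) \not\cong K(2,0,1)$.
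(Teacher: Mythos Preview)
Your argument is correct. The paper's own proof is a one-liner: it observes that $K(2,1,0)$ contains a copy of the split complex numbers $\mathrm{Cl}(0,1,0)=\mathbb R\oplus\mathbb R$, while $K(2,0,1)$ does not. Your invariant is essentially the same one in disguise, since a unital real algebra contains a copy of $\mathbb R\oplus\mathbb R$ if and only if it has a nontrivial idempotent. Where you gain over the paper is in the non-existence direction: the paper simply asserts that $K(2,0,1)$ has no split-complex subalgebra, whereas you actually prove it by combining the quadratic relation $f^2-T(f)f+N(f)=0$ with the explicit norm formula $N(f)=a_0^2+a_3^2$ from Theorem~\ref{T:KingNorm}. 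Your verification that the Clifford conjugation $\kappa$ negates all seven imaginary basis elements (in particular $\kappa(\omega)=k(ij)=-\omega$ via \eqref{E:FormsOfO}) is also correct, and makes $T(f)=2a_0$ immediate. So the two proofs rest on the same distinguishing feature, but yours supplies the missing justification.
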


\begin{proof}
While the first of these contains a copy $\mathrm{Cl}(0,1,0)$ of the split complex numbers, the second does not.
\end{proof}

\begin{lemma}
$K(1,1,1)\cong K(1,2,0)$
\end{lemma}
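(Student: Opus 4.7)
The plan is to apply the universal property of Kingdon algebras (Proposition~\ref{P:UnvPrKVB}) to construct a homomorphism $\Theta\colon K(1,2,0)\to K(1,1,1)$, and then show it is an isomorphism by a dimension count. Let $\{i,j,k\}$ be the standard orthogonal basis of $K(1,1,1)$, with $i^2=0$, $j^2=1$, $k^2=-1$, and let $(V,B')$ denote the three-dimensional formed space with orthogonal basis $\{e_1,e_2,e_3\}$ satisfying $B'(e_1,e_1)=0$ and $B'(e_2,e_2)=B'(e_3,e_3)=2$, so that $K(V,B')=K(1,2,0)$. Although $(V,B')$ is not isometric to the formed space underlying $K(1,1,1)$, the split quaternion subalgebra $\langle j,k\rangle$ admits a different orthogonal basis of trace-zero elements both squaring to $+1$, reflecting the identification $\mathrm{Cl}[0,1,1]=\mathrm{Cl}[0,2,0]$ in \eqref{E:ClifCayl}. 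Concretely, I would define the linear map $\theta\colon V\to K(1,1,1)$ by $\theta(e_1)=i$, $\theta(e_2)=j$, $\theta(e_3)=jk$.

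The Clifford conditions $\theta(e_a)\theta(e_b)+\theta(e_b)\theta(e_a)=B'(e_a,e_b)\cdot 1$ are routine to verify: the squares $i^2=0$, $j^2=1$, and $(jk)^2=-j^2k^2=1$ (using diassociativity, Theorem~\ref{L:2assoc}, within $\langle j,k\rangle$) match the prescribed values of $B'$, while the three orthogonality relations follow directly from Table~\ref{Tb:Mlt8-dim}, where $Q(i)=0$ kills many off-diagonal terms.

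The main obstacle is verifying the palindromic conditions $(\theta(e_a)\theta(e_b))\theta(e_c)=\theta(e_c)(\theta(e_b)\theta(e_a))$ for all triples $(a,b,c)$. Relations with a repeated index reduce to alternativity or flexibility (Corollary~\ref{C:AltFlexi}), while orthogonality pairs the remaining six triples of distinct indices into three independent assertions: that $ij$ anticommutes with $jk$, that $i(jk)=-\omega$ anticommutes with $j$, and that $j(jk)=k$ anticommutes with $i$. Each is a direct computation via Table~\ref{Tb:Mlt8-dim}; for example, $(ij)(jk)=Q(j)ki=ki$ and $(jk)(ij)=-Q(j)ki=-ki$ sum to zero.

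With all relations confirmed, Proposition~\ref{P:UnvPrKVB} yields the homomorphism $\Theta\colon K(1,2,0)\to K(1,1,1)$ extending $\theta$. Since $k=j\cdot(jk)=\Theta(e_2e_3)$ lies in the image of $\Theta$, that image contains the generating set $\{i,j,k\}$ of $K(1,1,1)$, so $\Theta$ is surjective. Both algebras are $8$-dimensional by Theorem~\ref{T:cliffordtocayley}, hence $\Theta$ is an isomorphism.
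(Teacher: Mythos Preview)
Your proof is correct. The isomorphism you construct is exactly the inverse of the one the paper writes down: the paper lists an explicit $8$-element basis map $K(1,1,1)\to K(1,2,0)$ sending $v_1\mapsto w_1$, $v_2\mapsto w_2$, $v_3\mapsto w_2w_3$ (and then the remaining basis elements to the forced targets), asserting without detailed verification that it is an algebra isomorphism. Your map $e_1\mapsto i$, $e_2\mapsto j$, $e_3\mapsto jk$ is precisely the inverse of this, so the two proofs identify the same isomorphism.

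The genuine difference is methodological. By invoking Proposition~\ref{P:UnvPrKVB}, you reduce the verification to the Clifford and palindromic relations on three generators---a handful of identities read off from Table~\ref{Tb:Mlt8-dim}---rather than checking that an $8\times 8$ multiplication table is preserved. This is both shorter and more in keeping with the paper's emphasis on the universality property of Kingdon algebras. The paper's approach, by contrast, has the minor advantage of making the full correspondence visible at a glance.
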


\begin{proof}
We have a subalgebra isomorphic to the split quaternions in both cases, and the third generator of each algebra squares to $0$.

Suppose $v_1$, $v_2$ and $v_3$ are the generators of the first algebra, and $w_1$, $w_2$ and $w_3$ the generators of the second algebra, with $v_1^2=w_1^2=0$, $v_2^2=w_2^2=w_3^2=1$ and $v_3^2=-1$. Then the vector space isomorphism 
$$
\scalebox{0.84}
{
\xymatrix{
1
\ar@{|->}[d]
&
v_1
\ar@{|->}[d]
&
v_2
\ar@{|->}[d]
&
v_3
\ar@{|->}[d]
&
v_1v_2
\ar@{|->}[d]
&
v_1v_3
\ar@{|->}[d]
&
v_2v_3
\ar@{|->}[d]
&
(v_1v_2)v_3
\ar@{|->}[d]
\\
1
&
w_1
&
w_2
&
w_2w_3
&
w_1w_2
&
-(w_1w_2)w_3
&
w_3
&
-w_1w_3
}
}
$$
is an algebra isomorphism.
\end{proof}

\begin{lemma}
$K(1,1,1)\not\cong K(1,0,2)$.
\end{lemma}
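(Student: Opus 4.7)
The plan is to follow the strategy used in the preceding lemma ($K(2,1,0) \not\cong K(2,0,1)$): exhibit a subalgebra that lives in one algebra but not in the other. Specifically, I would show that $K(1,1,1)$ contains a copy of the split complex numbers $\mathrm{Cl}(0,1,0) = \mathbb{R} \oplus \mathbb{R}$, while $K(1,0,2)$ does not. The former direction is immediate: the generator of $K(1,1,1)$ squaring to $+1$ together with $1$ spans a two-dimensional subalgebra isomorphic to the split complex numbers (by Proposition~\ref{P:4cliff}).

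The substantive step is ruling out such a subalgebra in $K(1,0,2)$. My approach is to translate the question into one about square roots of $1$: a copy of $\mathrm{Cl}(0,1,0)$ exists if and only if some non-scalar element squares to $1$. I would apply Theorem~\ref{T:KingNorm} with $\varepsilon_1 = 0$ and $\varepsilon_2 = \varepsilon_3 = -1$ (choosing the isotropic generator as the first in the Cayley-Dickson chain). The vanishing of $\varepsilon_1$ kills four of the eight summands, leaving the positive semi-definite expression $N(x) = a_0^2 + a_2^2 + a_3^2 + a_5^2$ for a general element $x$ of the form \eqref{E:GenElemx}.

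By Corollary~\ref{P:quadratic}, $K(1,0,2)$ is symmetric quadratic, so every $x$ satisfies $x^2 - T(x)x + N(x) = 0$ with $T(x) = 2a_0$ (the conjugation negates every non-scalar basis element). If $x^2 = 1$, then $2a_0 x = N(x) + 1$. Either $a_0 \ne 0$, in which case $x$ is a real scalar and hence $x = \pm 1$; or $a_0 = 0$ and $N(x) = -1$, which is forbidden by positive semi-definiteness of $N$ on $K(1,0,2)$. This shows the only square roots of $1$ in $K(1,0,2)$ are $\pm 1$, so no copy of the split complex numbers can embed, completing the non-isomorphism.

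I do not foresee a genuine obstacle: the main point is the positive semi-definiteness of $N$ on $K(1,0,2)$, which follows transparently from Theorem~\ref{T:KingNorm} once the $\varepsilon_l$ are identified. The only care needed is in the bookkeeping of which $a_l$'s survive with which signs, and in justifying that $T(x) = 2a_0$ (which comes from iteratively applying the conjugation rule $(a,b)^* = (a^*,-b)$ of the Cayley-Dickson process along the chain \eqref{E:CDepslon}).
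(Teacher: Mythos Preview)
Your argument is correct and complete. The paper takes a slightly different route: it asserts that $K(1,1,1)$ contains a copy of the split quaternions $\mathrm{Cl}(0,1,1)$ while $K(1,0,2)$ does not, but gives no further detail. Your choice of the split complex numbers $\mathrm{Cl}(0,1,0)$ instead is in fact sharper, since the split quaternions contain a copy of the split complex numbers; thus your non-embedding result for $\mathrm{Cl}(0,1,0)$ in $K(1,0,2)$ already implies the paper's claim about $\mathrm{Cl}(0,1,1)$. The advantage of your approach is that it reduces cleanly to the question of whether $K(1,0,2)$ has a nontrivial square root of $1$, which the positive semi-definiteness of $N$ (from Theorem~\ref{T:KingNorm} with $\varepsilon_1=0$, $\varepsilon_2=\varepsilon_3=-1$) settles immediately via the quadratic relation $x^2-T(x)x+N(x)=0$. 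The paper's assertion, if unpacked, would presumably require a comparable norm argument anyway.
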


\begin{proof}
While the first of these algebras contains a copy $\mathrm{Cl}(0,1,1)$ of the split quaternions, the second does not.
\end{proof}

Table~\ref{Tb:ClassTab} presents a full classification of all the eight-dimensional real Kingdon algebras. The centers and nuclei are described in terms of the standard basis $\set{1,i,j,k,ij,jk,ik,\omega}$. The table also identifies algebras to which the given Kingdon algebra is isomorphic, where applicable.

\renewcommand{\arraystretch}{1.5}
\begin{table}[hbt]
\centering
\small
\begin{tabular}{c||c|c|c|c|c|}
Algebra & Simple? & $\begin{matrix}\mbox{Center}\\ \mbox{basis}\end{matrix}$& $\begin{matrix}\mbox{Nucleus}\\ \mbox{basis}\end{matrix}$& $\begin{matrix}\mbox{Division}\\ \mbox{algebra?}\end{matrix}$ & $\cong$\\
\hline
\hline
$K(3,0,0)$ & No & $\set{1,\omega}$ & $\set{1,ij,jk,ik,\omega}$ & No & $\Lambda'(\mathbb{R}^3)$\\
\hline
$K(2,0,1)$ & No & $\set1$ & $\set{1,ij,\omega}$ ($k^2=-1$) & No & -\\
\hline
$K(2,1,0)$ & No & $\set1$ & $\set{1,ij,\omega}$ ($k^2=1$) & No & -\\
\hline
$K(1,1,1)$ & No & $\set1$ & $\set1$  & No & $K(1,2,0)$\\
\hline
$K(1,0,2)$ & No & $\set1$ & $\set1$ & No & -\\
\hline
$K(0,3,0)$ & Yes & $\set1$ & $\set1$ & No & $\widetilde{\mathbb{O}}$\\
\hline
$K(0,2,1)$ & Yes & $\set1$ & $\set1$ & No & $\widetilde{\mathbb{O}}$\\
\hline
$K(0,1,2)$ & Yes & $\set1$ & $\set1$ & No & $\widetilde{\mathbb{O}}$\\
\hline
$K(0,0,3)$ & Yes & $\set1$ & $\set1$ & Yes & $\mathbb{O}$\\
\hline
\end{tabular}
\vskip 3mm
\caption{Real Kingdon algebras of dimension $8$.}\label{Tb:ClassTab}
\end{table}
\renewcommand{\arraystretch}{1}

\subsection{Comparison with the Cayley-Dickson construction}\label{S:cayley}

Each $\mathbb R$-algebra presented in Table~\ref{Tb:ClassTab} may be obtained by the Cayley-Dickson process, according to Theorem~\ref{T:cliffordtocayley}. As in the proof of that theorem \eqref{E:BijkVEps}, suppose that the set $\set{i,j,k}$ is orthogonal with respect to a symmetric bilinear form $B$ on $V$, where
\begin{align*}
B(i,i)=2\varepsilon_1\,,\
B(j,j)=2\varepsilon_2\,,\
B(k,k)=2\varepsilon_3
\end{align*}
and $\varepsilon_h\in\set{0,\pm1}$ for $1\le h\le3$. The Kingdon algebra $K(V,B)$ is then implemented as the Cayley-Dickson algebra $\mathbb R^{\varepsilon_1\varepsilon_2\varepsilon_3}$. The diagram
\begin{equation}\label{E:KingCayl}
\scalebox{0.87}
{
\xymatrix{
&
&
&
&
\\
&
&
\boxed{
\protect{
\begin{matrix}
\Lambda'(\mathbb R^3)=
\\
K(3,0,0)
\end{matrix}
}
}
&
&
\\
&
K(2,1,0)
&
\boxed{
\protect{
\begin{matrix}
\Lambda(\mathbb R^2)=
\\
\mathrm{Cl}[2,0,0]
\end{matrix}
}
}
\ar[u]^{0}
\ar[l]_{1}
\ar[r]^{-1}
&
K(2,0,1)
&
\\
K(1,1,1)
\ar@{=}[d]
&
\mathrm{Cl}[1,1,0]
\ar[u]^{0}
\ar[l]_{-1}
\ar[dl]^{1}
&
\boxed{
\protect{
\begin{matrix}
\Lambda(\mathbb R)=
\\
\mathrm{Cl}[1,0,0]
\end{matrix}
}
}
\ar[r]^-{-1}
\ar[l]_-{1}
\ar[u]^-{0}
&
\mathrm{Cl}[1,0,1]
\ar[dr]^-{-1}
\ar[u]_{0}
\ar`[ur]^-1`[uuu]`[lll][lll]
&
\\
K(1,2,0)
&
\boxed{
\protect{
\begin{matrix}
\mathbb R^2=
\\
\mathrm{Cl}[0,1,0]
\end{matrix}
}
}
\ar[u]^-{0}
\ar@<.7ex>[d]^-{1}
\ar[dr]^-{-1}
&
\mathbb R
\ar[r]^-{-1}
\ar[l]_-{1}
\ar[u]^-{0}
&
\boxed{
\protect{
\begin{matrix}
\mathbb C=
\\
\mathrm{Cl}[0,0,1]
\end{matrix}
}
}
\ar[u]_-{0}
\ar[d]^-{-1}
\ar[dl]_-{1}
&
K(1,0,2)
\\
&
\save [].[r]*[F]\frm{}
\mathrm{Cl}[0,2,0]
\ar[ul]^-{0}
\ar[d]^-{1}
\ar[dl]_-{-1}
&
\mathrm{Cl}[0,1,1]=\mathbb R_2^2
\ar@{=}[l]
\ar[d]_-{-1}
\restore
&
\boxed{
\protect{
\begin{matrix}
\mathbb H=
\\
\mathrm{Cl}[0,0,2]
\end{matrix}
}
}
\ar[ur]_{0}
\ar[d]^-{-1}
\ar[dl]_-{1}
\\
\save [].[r].[rr]*[F]\frm{}
K(0,2,1)
&
K(0,3,0)
\ar@{=}[l]
\ar@{=}[r]
&
K(0,1,2)=\widetilde{\mathbb O}
\restore
&
\boxed{
\protect{
\begin{matrix}
\mathbb O=
\\
K(0,0,3)
\end{matrix}
}
}
}
}
\end{equation}
presents a full record of how each $8$-dimensional Kingdon algebra from Table~\ref{Tb:ClassTab} emerges as a Cayley-Dickson algebra. Lower-dimensional real Kingdon algebras are labelled with the Clifford algebra notation of \eqref{E:Cb0b1b-1}, while the  $8$-dimensional Kingdon algebras are labelled with the notation of Definition~\ref{D:Kb0b1b-1}. Diagram \eqref{E:KingCayl} should be viewed as a directed graph. Vertices at a distance of $d$ from the central vertex $\mathbb R$, as Kingdon algebras $K(W,B)$ for a formed space $(W,B)$ of dimension $d$, represent Kingdon algebras of dimension $2^d$.

The central part of the graph, namely the $2$-neighborhood of $\mathbb R$, incorporates the diagram \eqref{E:ClifCayl} which represented Clifford algebras as Cayley-Dickson algebras. Note that each vertex of this $2$-neighborhood has out-degree $3$, with edges labelled $0$, $1$, or $-1$. The graph vertex $\mathbb R_2^2$ is presented in extended form for clarity, as are the extreme vertices $K(1,1,1)$ and $\widetilde{\mathbb O}$.

\section*{Acknowledgement}

The first author expresses his thanks to Jonas Hartwig for the advice he gave early in this project, and for feedback during early presentations of the ideas in the paper.

\end{document}